\newtheorem{thm}{Theorem}[section]
\newtheorem{prop}[thm]{Proposition}
\newtheorem{lem}[thm]{Lemma}
\newtheorem{cor}[thm]{Corollary}
\newtheorem{example}[thm]{Example}
\newtheorem{remark}[thm]{Remark}
\newcommand{\R}{\mathbb{R}}
\newcommand{\Z}{\mathbb{Z}}
\newcommand{\C}{\mathbb{C}}
\newcommand{\diffI}{\mathrm{Diff}^+(I)}
\begin{document}

\title[Transforms for general elastic metrics]{Simplifying transforms for general elastic metrics on the space of plane curves}

\author{Sebastian Kurtek and Tom Needham}

\maketitle

\begin{abstract}
In the shape analysis approach to computer vision problems, one treats shapes as points in an infinite-dimensional Riemannian manifold, thereby facilitating algorithms for statistical calculations such as geodesic distance between shapes and averaging of a collection of shapes. The performance of these algorithms depends heavily on the choice of the Riemannian metric. In the setting of plane curve shapes, attention has largely been focused on a two-parameter family of first order Sobolev metrics, referred to as elastic metrics. They are particularly useful due to the existence of simplifying coordinate transformations for particular parameter values, such as the well-known square-root velocity transform. In this paper, we extend the transformations appearing in the existing literature to a family of isometries, which take any elastic metric to the flat $L^2$ metric. We also extend the transforms to treat piecewise linear curves and demonstrate the existence of optimal matchings over the diffeomorphism group in this setting. We conclude the paper with multiple examples of shape geodesics for open and closed curves. We also show the benefits of our approach in a simple classification experiment.
\end{abstract}

\section{Introduction}

Shape is a fundamental physical property of objects, and plays an important role in various imaging tasks including identification and tracking. As a result, statistical analysis of shape plays a crucial role in many image-rich application domains including computer vision, medical imaging, biology, bioinformatics, geology, biometrics and others. In statistical shape analysis, shape is viewed as a random object, and one is concerned with developing methods to perform common statistical tasks including registration, comparison, averaging, summarization of variability, hypothesis testing, regression, and other inferential procedures. Any statistical shape analysis approach requires an appropriate shape representation and an associated metric that enables quantification of shape differences. Evidently, the quality of statistical analyses of shape data is heavily dependent on these choices.

There is a rich literature on statistical analysis of shape, with the most prominent shape representation being landmark-based. Landmarks constitute a finite collection of points that are chosen either by the application expert (anatomical landmarks) or according to some mathematical rule such as high absolute curvature (mathematical landmarks). Once the points are selected, the remaining information regarding the object's outline is discarded. Under this representation, D.G. Kendall \cite{Kendall} defined shape as a property of an object that is invariant to its rigid motions and global scaling; this approach is commonly referred to as similarity shape analysis. Since then, there has been continuous development of statistical tools to analyze similarity shapes represented by landmarks; see \cite{DrydenBook,small_boook:96} for a comprehensive set of methods. These approaches combine ideas from differential geometry, algebra and multivariate statistics to establish rigorous estimation and inferential procedures on the landmark shape space. The main benefit of these approaches is that the resulting shape space is finite dimensional, making statistical analysis ``easier." However, the obvious drawback is that the finite collection of landmarks used to represent shapes of interest results in significant loss of information.

Recently, there has been more emphasis on using a function-based representation of shape, i.e., objects are represented via their boundaries as parameterized curves. Thus, in this case, one must account for possible parameterization variability in addition to rigid motion and global scaling. One set of methods removes this variability by normalizing all parameterizations to arclength \cite{Klassen2004,Zahn}. However, such an approach is suboptimal in many real scientific problems due to a lack of appropriate registration. A better approach is to remove such variability in a pairwise manner using an appropriate metric. This is the idea behind elastic shape analysis, where a family of elastic metrics is used for joint registration and comparison. The resulting shape spaces are more complicated than their landmark counterparts, but the benefits of such approaches are clear: (1) there is no need to select landmarks which can be a tedious and expensive process, (2) the curve representation is able to encode all relevant shape information, and (3) the elastic metric quantifies intuitive shape deformations. Elastic shape analysis is the focus of the current paper, and we provide a formal mathematical setup for this approach in the following section.

\subsection{Elastic Shape Analysis}\label{sec:elastic_shape_analysis}

A fundamental ingredient in a theory of shape similarity for plane curves is a distance metric on the space of curves $\mathcal{S}$ which is invariant under rigid transformations of the curves. For a pair of plane curves $C_1$ and $C_2$, we therefore wish to assign a distance $d(C_1,C_2)$ such that $d(\xi_1 \star C_1, \xi_2 \star C_2) = d(C_1,C_2)$ for any elements $\xi_j$ of the Euclidean isometry group $\R^2 \ltimes \mathrm{SO}(2)$ acting in the natural way.

Under the elastic shape analysis paradigm, the distance function described above arises from a Riemannian metric on $\mathcal{S}$. This extra structure has obvious benefits over treating $\mathcal{S}$ only as a metric space---for example, statistical calculations such as Karcher means or principal component analysis can be performed in a  tangent space after pulling back via the logarithm map. The Riemannian structure is defined by representing shapes as elements of the quotient space $\mathcal{S}=\mathrm{Imm}(I,\R^2)/\mathrm{Diff}^+(I)$ ($I$ denotes a subinterval of the real line) of smooth immersions modulo reparameterizations, so that a choice $g$ of $\mathrm{Diff}^+(I)$-invariant metric on the relatively simple space $\mathrm{Imm}(I,\R^2)$ descends to a well-defined metric on $\mathcal{S}$. If the Riemannian metric is also invariant under Euclidean similarities then the geodesic distance with respect to this metric induces our desired distance function $d$, via the formula
$$
d(C_1,C_2) = \inf_{\gamma_1,\gamma_2 \in \mathrm{Diff}^+(I) \times (\R^2 \ltimes  \mathrm{SO}(2))} \mathrm{dist}_g(\gamma_1 \star c_1,\gamma_2 \star c_2).
$$
In this formula, the $c_j$ are arbitrary choices of parameterizations of $C_j$, $\mathrm{dist}_g$ denotes geodesic distance in $\mathrm{Imm}(I,\R^2)$ with respect to $g$, and $\star$ denotes the action of the group $\mathrm{Diff}^+(I) \times (\R^2 \ltimes  \mathrm{SO}(2))$ of shape-preserving transformations on $\mathrm{Imm}(I,\R^2)$.

The simplest choice of Riemannian metric is the reparameterization-invariant $L^2$-metric defined for $c \in \mathrm{Imm}(I,\R^2)$ and $h,k \in T_c \mathrm{Imm}(I,\R^2) \approx C^\infty(I,\R^2)$ by the formula
$$
\hat{g}^{L^2}_c (h,k) = \int_I \left<h,k\right> \; \mathrm{d}s.
$$
The nonlinearity of this metric lies in the \emph{measure with respect to arclength}  $\mathrm{d}s=|c'(t)|\;\mathrm{d}t$, which provides the desired $\mathrm{Diff}^+(I)$-invariance. It is a surprising fact that geodesic distance on the shape space vanishes with respect to $\hat{g}^{L^2}$ \cite{michor2005vanishing}, and one must therefore consider more complicated metrics on the space of immersions. Examples in the literature of such metrics  include almost-local (weighted $L^2$) metrics \cite{bauer2012almost,bauer2012curvature,michor2007overview} and higher order Sobolev-type metrics \cite{bauer2016use,charpiat2007shape,michor2007overview,sundaramoorthi2007sobolev}. A particularly well-studied subfamily of first-order Sobolev metrics are the \emph{elastic metrics} introduced in \cite{mio2007shape}. These are a two-parameter family of metrics $g^{a,b}$ defined as follows. For $c \in \mathrm{Imm}(I,\R^2)$, let $(T,N)$ denote the standard moving frame consisting of the unit tangent and unit normal to $c$, respectively. We use $D_s = \frac{1}{|c'(t)|} \frac{d}{dt}$ for \emph{derivative with respect to arclength}. For $a,b \neq 0$ and $h,k \in T_c \mathrm{Imm}(I,\R^2)$, we define
\begin{equation}\label{eqn:elastic_metric}
g^{a,b}_c(h,k)=\int_I  a^2 \left<D_s h, N\right>\left<D_s k, N\right> + b^2 \left<D_s h,T \right>\left<D_s k, T \right> \; \mathrm{d}s.
\end{equation}
This metric is invariant under reparameterizations and rigid motions, so descends to a well-defined metric on the shape space $\mathcal{S}$.

\subsection{Previous Work}

In order to compute the distance between curves in $\mathcal{S}$, our procedure requires the computation of geodesic distance with respect to the chosen metric. We will focus on elastic metrics $g^{a,b}$. A common technique is to apply the \emph{square-root velocity function} (SRVF) transform,  given by
\begin{align*}
\mathrm{Imm}(I,\R^2) &\rightarrow C^\infty(I,\R^2) \\
c &\mapsto \frac{c'}{|c'|^{1/2}}.
\end{align*}
The theoretical power of the SRVF is the remarkable fact that the pullback of the standard $L^2$ metric on the target space is the elastic metric $g^{1,1/2}$  \cite{joshi2007novel}, whence geodesics with respect to $g^{1,1/2}$ in $\mathrm{Imm}(I,\R^2)$ can be computed explicitly by pushing forward to the flat target space, computing geodesics there, then pulling the result back. Due to this convenient property, the SRVF transform has been studied extensively from a theoretical perspective \cite{bruveris2016optimal, lahiri2015precise, srivastava2011shape, tumpach2017quotient} and has seen a wide variety of applications, including classification of plant leaf shapes  \cite{laga2012riemannian}, statistical analysis of protein structures \cite{srivastava2016efficient} and biomedical imaging of anatomical features in the brain \cite{ayers2015corpus}. A similarly simple transform is introduced in \cite{younes2008metric}, where a plane curve $c$ is taken to the curve $\sqrt{c'}$, with the square-root taken pointwise by considering $c'$ to be a path in the complex plane; in this case, the map pulls back standard $L^2$ to $g^{1/2,1/2}$. A more complicated family of transforms $R_{a,b}$ is defined in \cite{bauer2014constructing} for $2b \geq a > 0$ and it is shown that the pullback by $R_{a,b}$ of the $L^2$ metric on its target is $g^{a,b}$.

In this paper, we define a two-parameter family of transforms $F_{a,b}:\mathrm{Imm}(I,\R^2) \rightarrow C^\infty(I,\R^2)$ which is valid for \emph{all} choices of $a,b > 0$. Our main result (Theorem \ref{thm:pullback}) is that $F_{a,b}$ pulls back the $L^2$ metric to the elastic metric $g^{a,b}$. Moreover, we show that $F_{a,b}$ subsumes the SRVF transform, the complex square-root transform, and the $R_{a,b}$-transforms. 

While preparing this manuscript, we learned of similar work being done concurrently by Bauer, Bruveris, Charon and Moeller-Andersen \cite{bauer2018relaxed}. Their work focuses on efficient numerical computations of geodesics for general Sobolev-type metrics, including elastic metrics. Their methods are disjoint from ours, and the two approaches should be complementary. 

\subsection{Outline of the Paper}

In Section \ref{sec:Fb_transform}, we define the transform $F_{a,b}$ and prove our main result. We also consider the important submanifold of closed plane curves, and more precisely compare our transform to those described in the previous subsection. Section \ref{sec:group_actions} describes how various shape-preserving group actions behave in $F_{a,b}$-coordinates. In Section \ref{sec:computing_geodesics}, we describe the explicit geodesics in the  curve spaces. Numerical implementation is formally treated in Section \ref{sec:PL_shapes}, where the transform is extended to treat piecewise linear (PL) curves. In particular, we show that optimal registrations over the diffeomorphism group are realized by PL reparameterizations in this setting. Finally, we provide numerical examples in Section \ref{sec:applications} and suggest future directions in Section \ref{sec:conclusion}.

\section{The $F_{a,b}$ Transform}\label{sec:Fb_transform}

\subsection{Shape Spaces of Open Curves}\label{sec:shape_spaces}

The space of plane curve shapes is obtained via a quotient construction, starting with the space of immersions
$$
\mathrm{Imm}(I,\R^2)=\{c \in C^\infty(I,\R^2) \mid |c'(t)|\neq 0 \; \forall \; t\in I\},
$$
where, without loss of generality, $I=[0,1]$ is fixed. To simplify calculations, we make the identification $\mathrm{Imm}(I,\R^2) \approx \mathrm{Imm}(I,\C)$. We note here that our choice of $C^\infty$ regularity is primarily a matter of convenience and that our results hold essentially without modification for $C^1$ curves. Decreasing regularity below $C^1$ does cause some theoretical issues, and these are treated in Section \ref{sec:PL_shapes}.

There are various shape-preserving Lie group actions which we will quotient by: $\R^2$ acting by translations, $\R_{>0}$ acting by scaling, $\mathrm{SO}(2)$ acting by rotations and $\mathrm{Diff}^+(I)$ acting by reparameterizations. The easiest action to deal with is translations, as there is an obvious isomorphism with the space of curves based at zero,
$$
\mathrm{Imm}(I,\C)/\R^2 = \mathrm{Imm}(I,\C)/\mbox{Tra} \approx \{c \in C^\infty(I,\C) \mid c'(t) \neq 0 \; \forall \; t\in I, \; c(0)=0\}.
$$
We take this identification as a convention in order to simplify notation. Most of our explicit calculations will take place in this space, which we refer to as the \emph{preshape space of curves}.

Our goal is to understand the following quotient space with the full set of shape similarities modded out:
$$
\mathrm{Imm}(I,\C)/(\R^2 \ltimes \R_{\geq 0} \times \mathrm{SO}(2) \times \mathrm{Diff}^+(I))=\mathrm{Imm}(I,\C)/\{\mbox{Tra, Sca, Rot, Rep}\}.
$$
We refer to this quotient space as the \emph{shape space of curves} and denote it by $\mathcal{S}$. Intermediate spaces such as $\mathrm{Imm}(I,\C)/\{\mathrm{Tra, Rot}\}$ will appear frequently and we will treat them separately as they arise.

\subsection{The $F_{a,b}$ Transform on Preshape Space}\label{sec:F_b_transform}

For any $a,b > 0$, we define the $F_{a,b}$-transform by the formula
 \begin{align*}
F_{a,b}:\mathrm{Imm}(I,\C)/\mbox{Tra} &\rightarrow C^\infty(I,\C^\ast) \\
c &\mapsto 2b |c'|^{1/2} \left(\frac{c'}{|c'|}\right)^{\frac{a}{2b}}.
\end{align*}
We use the notation $\C^\ast = \C \setminus \{0\}$. In the formula, all arithmetic operations are taken pointwise on complex numbers. One should immediately notice that, due to the presence of complex exponentiation, $F_{a,b}$ is not well-defined in general. Indeed, writing $c'$ in polar coordinates $r \exp(i\theta)$, the continuous argument function $\theta$ is only unique up to a global addition of an integer multiple of $2\pi$. The values of $F_{a,b}$ are then given by
$$
2br^{1/2} \exp\left(i( \theta + 2 k \pi) \frac{a}{2b}\right) = 2br^{1/2} \exp\left(i \theta \frac{a}{2b}\right) \cdot \exp\left(i\frac{a}{b}k\pi\right)
$$
for $k \in \Z$. Therefore $F_{a,b}$ is technically defined as a multivalued function with image set
\begin{equation}\label{eqn:image_set}
F_{a,b}(c) = \left\{q \cdot \exp\left(i\frac{a}{b} k \pi\right) \mid k \in \Z \right\},
\end{equation}
where $q$ is some arbitrary choice of image of $c$. In this form, it is easy to see that $F_{a,b}$ is a bijection if and only if $\frac{a}{2b} =1$. If $\frac{a}{2b}$ is an integer not equal to one then $F_{a,b}$ is well-defined but many-to-one. If $\frac{a}{2b}$ is not an integer then $F_{a,b}$ is multivalued, taking finitely many values if and only if $\frac{a}{2b}$ is rational.

For the sake of concreteness, we can locally define $F_{a,b}$ more precisely as follows. Let $c \in \mathrm{Imm}(I,\C)/\mathrm{Tra}$ and choose a polar coordinate representation of its derivative $c'=r\exp(i\theta)$ so that $\theta$ is continuous on $I$. The magnitude function $r$ is unique and such a choice of $\theta$ is unique up to addition of a multiple of $2\pi$. Moreover, any parameterized curve $\widetilde{c}$ which is $C^\infty$-close to $c$ has a polar representation $\widetilde{c}'=\widetilde{r}\exp(i\widetilde{\theta})$ so that $\theta$ and $\widetilde{\theta}$ are $C^\infty$-close.  The $F_{a,b}$-transform is then defined locally by
\begin{equation}\label{eqn:polar_rep}
F_{a,b}(c)=2br^{1/2}\exp\left(i \theta \frac{a}{2b}\right).
\end{equation}
This shows that $F_{a,b}$ can be represented locally as a well-defined smooth map of infinite-dimensional manifolds (in the sense of \cite{hamilton1982inverse}). Statements in this section regarding the differential properties of $F_{a,b}$ should be treated as statements about this local representation.

\begin{remark}
The formula \eqref{eqn:polar_rep} involves a choice of image of $F_{a,b}$. Fortunately, all other choices of image differ from this one by a rotation and we will see in Theorem \ref{thm:isometry} that this implies that $F_{a,b}$ descends to a well-defined map on quotient spaces of curves modulo rotation.
\end{remark}

\begin{remark}\label{rmk:simple_curves}
One issue that can arise in the elastic shape analysis approach to shape matching is that curves which are close in Hausdorff distance can be far apart in geodesic distance (see Figure \ref{fig:bad_example}). This may be undesirable, depending on the application. One can overcome this issue by restricting to simple curves or by using ad hoc methods to account for such differences.
\end{remark}

\begin{figure}[!t]
\includegraphics[scale=0.3]{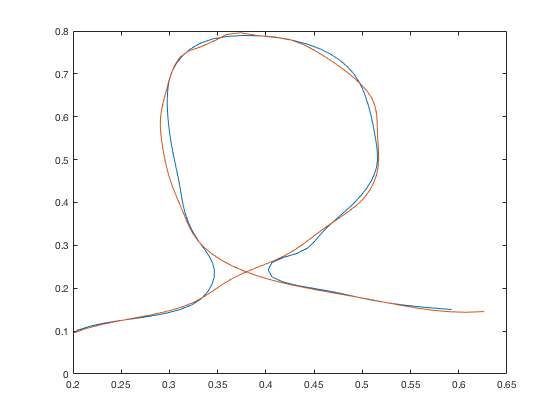} \includegraphics[scale=0.3]{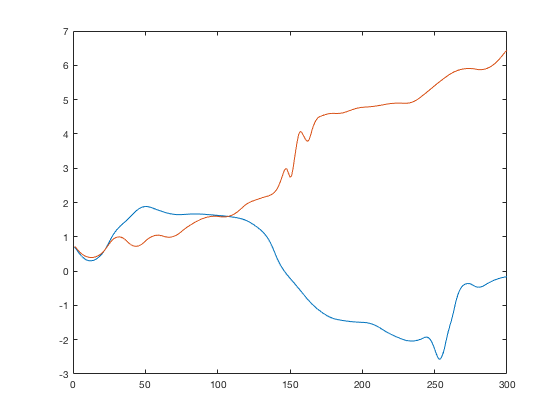}
\caption{Left: A pair of curves which are close in Hausdorff distance. Right: The continuous polar angle functions of the curves are quite different, resulting in images under $F_{a,b}$ which are far apart in Hausdorff distance.}\label{fig:bad_example}
\end{figure}

The transform $F_{a,b}$ is a local diffeomorphism with local inverse given by
\begin{align}\label{eqn:fb_inverse}
F_{a,b}^{-1}(q)(t)&=\frac{1}{4b^2} \int_0^t |q(\tau)|^2 \left(\frac{q(\tau)}{|q(\tau)|}\right)^{\frac{2b}{a}} \; \mathrm{d}\tau,
\end{align}
treated formally for non-integer $\frac{2b}{a}$ either as a multivalued or locally well-defined map.

\subsection{Pullback Metric}

Let $g^{L^2}$ denote the standard $L^2$ metric on the space $C^\infty(I,\C)$, defined at basepoint $q \in C^\infty(I,\C)$ on variations $w,z \in T_q C^\infty(I,\C) \approx C^\infty(I,\C)$ by the formula
$$
g^{L^2}_q(w,z) = \mathrm{Re} \int_I w \overline{z} \; \mathrm{d}t.
$$
This is a flat metric on the vector space $C^\infty(I,\C)$ which restricts to a flat metric on the open submanifold $C^\infty(I,\C^\ast)$. 

\begin{thm}\label{thm:pullback}
The $L^2$ metric $g^{L^2}$ on $C^\infty(I,\C^\ast)$ pulls back to the elastic metric $g^{a,b}$ under the transform $F_{a,b}$.
\end{thm}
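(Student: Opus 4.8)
The plan is to verify the identity $(F_{a,b}^* g^{L^2})_c = g^{a,b}_c$ pointwise by directly computing the differential $DF_{a,b}$, working throughout in the local polar representation $c' = r\exp(i\theta)$ from \eqref{eqn:polar_rep}. Since $F_{a,b}(c)$ depends on $c$ only through $c'$, its differential in a direction $h \in T_c\mathrm{Imm}(I,\C)$ depends only on $h'$, and the natural coordinates in which to work are the infinitesimal variations $(\delta r, \delta\theta)$ of the magnitude and argument of $c'$ induced by $h$. The key structural observation is that complex exponentiation, which is the obstruction to a naive computation, becomes the \emph{linear} map $\theta \mapsto \tfrac{a}{2b}\theta$ on the argument once we pass to this representation.

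First I would fix the moving frame $T = \exp(i\theta)$, $N = i\exp(i\theta)$ and record that $h' = (\delta r + i r\,\delta\theta)\exp(i\theta)$, whence the arclength derivative $D_s h = h'/r$ satisfies $\langle D_s h, T\rangle = \delta r / r$ and $\langle D_s h, N\rangle = \delta\theta$, where $\langle\,\cdot\,,\,\cdot\,\rangle = \mathrm{Re}(\,\cdot\,\overline{\cdot}\,)$. Substituting into \eqref{eqn:elastic_metric} and using $\mathrm{d}s = r\,\mathrm{d}t$, the elastic metric takes the diagonal form
$$
g^{a,b}_c(h,k) = \int_I \left( b^2 \frac{\delta r\,\delta\rho}{r} + a^2 r\,\delta\theta\,\delta\phi \right)\mathrm{d}t,
$$
where $(\delta\rho,\delta\phi)$ are the variations associated to a second tangent vector $k$. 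This rewriting isolates the tangential ($b$) and normal ($a$) contributions in a way that mirrors the two-part structure of $F_{a,b}$.

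Next I would compute the differential. Writing $q = F_{a,b}(c) = \rho_q \exp(i\psi)$ with $\rho_q = 2b\,r^{1/2}$ and $\psi = \tfrac{a}{2b}\theta$, the chain rule gives the induced variations $\delta\rho_q = b\,r^{-1/2}\delta r$ and $\delta\psi = \tfrac{a}{2b}\delta\theta$, so that
$$
DF_{a,b}(h) = \left( b\,r^{-1/2}\delta r + i\,a\,r^{1/2}\delta\theta \right)\exp(i\psi).
$$
Computing $g^{L^2}_q(DF_{a,b}(h), DF_{a,b}(k)) = \mathrm{Re}\int_I DF_{a,b}(h)\,\overline{DF_{a,b}(k)}\,\mathrm{d}t$, the phase factors $\exp(i\psi)$ cancel and the real part of the remaining product reproduces exactly the integrand displayed above, completing the identification.

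The computation is essentially bookkeeping, and I expect the only genuinely delicate point to be differentiating through the complex power $z \mapsto z^{a/2b}$; this is precisely why passing to the $(r,\theta)$ representation, where that operation linearizes, is the right move. One must also confirm that the statement is independent of the multivalued choice of image in \eqref{eqn:polar_rep}. Since any two choices differ by a global rotation (the factor $\exp(i\tfrac{a}{b}k\pi)$ in \eqref{eqn:image_set}) and $g^{L^2}$ is rotation-invariant, the pullback is unambiguous, consistent with the remark preceding the theorem and with Theorem \ref{thm:isometry}.
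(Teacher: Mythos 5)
Your proposal is correct, and the overall strategy is the one the paper uses: compute $DF_{a,b}$, pull back $g^{L^2}$, and match the result against the elastic metric rewritten in terms of $c'$ and $h'$. The difference is in how the computation is organized. The paper works directly with the complex quantities $\mathrm{Re}(c'\overline{h'})$ and $\mathrm{Im}(c'\overline{h'})$: it differentiates the formula $2b|c'|^{1/2}(c'/|c'|)^{a/2b}$ by brute force using $\frac{d}{d\epsilon}\big|_{0}|c'+\epsilon h'| = \mathrm{Re}(c'\overline{h'})/|c'|$, arriving at $DF_{a,b}(c)(h) = (c'/|c'|)^{a/2b}|c'|^{-3/2}\bigl(b\,\mathrm{Re}(c'\overline{h'}) - ia\,\mathrm{Im}(c'\overline{h'})\bigr)$, and separately recasts $g^{a,b}$ as $\int_I |c'|^{-3}\bigl(a^2\mathrm{Im}(c'\overline{h'})\mathrm{Im}(c'\overline{k'}) + b^2\mathrm{Re}(c'\overline{h'})\mathrm{Re}(c'\overline{k'})\bigr)\,\mathrm{d}t$. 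Your version passes to the polar variables $(\delta r,\delta\theta)$, which are related to the paper's by $\mathrm{Re}(c'\overline{h'}) = r\,\delta r$ and $\mathrm{Im}(c'\overline{h'}) = -r^2\,\delta\theta$, so the two computations are literally the same identity in different coordinates. What your parametrization buys is that the delicate step---differentiating through the complex power---becomes the linear map $\theta \mapsto \tfrac{a}{2b}\theta$, and both metrics appear simultaneously in diagonal form $\int_I\bigl(b^2\,\delta r\,\delta\rho/r + a^2 r\,\delta\theta\,\delta\phi\bigr)\,\mathrm{d}t$; what the paper's form buys is that it never needs to invoke the local polar representation inside the proof itself and stays closer to the intrinsic expression of $g^{a,b}$. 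Your closing remark on why the multivalued choice of branch is immaterial (the ambiguity is a global rotation, under which $g^{L^2}$ is invariant) is exactly the point the paper defers to the remark before the theorem and to Theorem \ref{thm:isometry}, so nothing is missing.
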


\begin{proof}
Let $c \in \mathrm{Imm}(I,\C)$ and let $h$ be a tangent vector to $c$. We first note that, expressing the unit normal $N$ to $c$ as $iD_s c$ and the Euclidean inner product as the real part of $(z,w) \mapsto z \overline{w}$, the elastic metric $g^{a,b}$ can be written as
\begin{align}
g_c^{a,b}(h,k) &= \int_I  a^2 \left<D_s h, N\right>\left<D_s k, N\right> + b^2 \left<D_s h, T\right> \left<D_s k, T\right> \; \mathrm{d}s \nonumber \\
&= \int_I \left(a^2 \mathrm{Re}\left(\frac{1}{|c'|}h' \cdot  -i \frac{\overline{c'}}{|c'|} \right)\mathrm{Re}\left(\frac{1}{|c'|}k' \cdot  -i \frac{\overline{c'}}{|c'|} \right) \right. \nonumber \\
&\hspace{1in} \left.+ b^2 \mathrm{Re} \left(\frac{1}{|c'|}h' \cdot \frac{\overline{c'}}{|c'|}\right) \mathrm{Re} \left(\frac{1}{|c'|}k' \cdot \frac{\overline{c'}}{|c'|}\right)\right) \; |c'|\mathrm{d}t \nonumber \\
&= \int_I \frac{1}{|c'|^3} \left(a^2 \mathrm{Im}\left(c' \overline{h'}\right)  \mathrm{Im}\left(c' \overline{k'}\right) + b^2 \mathrm{Re}\left(c' \overline{h'}\right) \mathrm{Re}\left(c' \overline{k'}\right) \right) \; \mathrm{d}t. \label{eqn:pullback_1}
\end{align}
Using the formula
$$
\left.\frac{d}{d\epsilon}\right|_{\epsilon = 0} |c' + \epsilon h'| = \frac{\mathrm{Re}(c' \overline{h'})}{|c'|},
$$
it is straightforward to show that
\begin{align*}
DF_{a,b}(c)(h) &=\left.\frac{d}{d\epsilon}\right|_{\epsilon=0} 2b |c' + \epsilon h'|^{1/2} \left(\frac{c' + \epsilon h'}{|c' + \epsilon h'|}\right)^{\frac{a}{2b}}  \\
&= \left(\frac{c'}{|c'|}\right)^{\frac{a}{2b}}\left( b |c'|^{-1/2}  \frac{\mathrm{Re}(c' \overline{h'})}{|c'|} + a |c'|^{1/2} \frac{|c'|}{c'} \frac{|c'|h' - c' \mathrm{Re}(c'\overline{h'})/|c'|}{|c'|^2} \right) \\
&= \left(\frac{c'}{|c'|}\right)^{\frac{a}{2b}} |c'|^{-\frac{3}{2}} \left(b \mathrm{Re}\left(c' \overline{h'}\right) - i a \mathrm{Im}\left(c' \overline{h'}\right)\right).
\end{align*}
Then the pullback metric is given by
\begin{align*}
\left(F_{a,b}^\ast g^{L^2}\right)_c(h,k) &= \int_I \mathrm{Re} \; DF_{a,b}(c)(h) \cdot \overline{DF_{a,b}(c)(k)} \; \mathrm{d}t \\
&= \int_I |c'|^{-3}  \mathrm{Re} \; \left(b \mathrm{Re}\left(c' \overline{h'}\right) - i a \mathrm{Im}\left(c'\overline{h'}\right)\right) \cdot \left(b \mathrm{Re}\left(c' \overline{k'}\right) + i a \mathrm{Im}\left(c'\overline{k'}\right)\right) \; \mathrm{d}t,
\end{align*}
which easily simplifies to \eqref{eqn:pullback_1}.
\end{proof}

\noindent We will show in Section \ref{sec:previous_work} that this theorem is a direct generalization of results appearing in \cite{bauer2014constructing,joshi2007novel,younes2008metric}.

\subsection{The Preshape Space of Closed Curves}\label{sec:closed_curves}

We now consider the preshape space of closed loops $\mathrm{Imm}(S^1,\C)/\mathrm{Tra}$ (i.e., the space of ``object outlines"). By identifying $S^1$ with the quotient $[0,1]/(0\sim 1)$, we can consider the preshape space of closed curves to be a submanifold of the preshape space of open curves of infinite codimension. Under this identification, the $F_{a,b}$-transform can be restricted to $\mathrm{Imm}(S^1,\C)/\mathrm{Tra}$ and the image of the restricted map will lie in $C^\infty(I,\C^\ast)$. We wish to characterize the image of the restricted $F_{a,b}$-transform. Using the polar form \eqref{eqn:polar_rep} of $F_{a,b}$, we see that for any closed curve $c$ with $c'=r\exp(i\theta)$,
\begin{align*}
F_{a,b}(c)(1) &= 2b\sqrt{r(1)}\exp\left(i\frac{a}{2b}\theta(1)\right) \\
&= 2b\sqrt{r(0)}\exp\left(i \frac{a}{2b} (\theta(0)+\theta(1)-\theta(0))\right) \\
&= F_{a,b}(c)(0) \cdot \exp\left(i \frac{a}{b} \pi \mathrm{ind}(c)\right),
\end{align*}
where $\mathrm{ind}(c)$ is the Whitney rotation index of the regular curve $c$ \cite{whitney1937regular}. By the same reasoning, a necessary condition for a complex curve $q \in C^\infty(I,\C^\ast)$ to be the image of a closed curve under $F_{a,b}$ is that there exists some integer $\ell$ such that
\begin{equation}\label{eqn:closure_condition_1}
q^{(k)}(1)=q^{(k)}(0)\cdot \exp\left(i \frac{a}{b} \pi \ell \right)
\end{equation}
for all integers $k \in \Z_{\geq 0}$. We denote by $V_{a,b}(\ell)$ the codimension-$\infty$ vector subspace of $C^\infty(I,\C)$ containing curves $q$ with property \eqref{eqn:closure_condition_1}. Let $V_{a,b}^\ast(\ell) = V_{a,b}(\ell) \cap C^\infty(I,\C^\ast)$, $V_{a,b}$ denote the union of all $V_{a,b}(\ell)$, and $V_{a,b}^\ast$ the union of the open submanifolds.  If we restrict to simple closed curves, then we are only interested in the vector space $V_{a,b}(1)$.

The discussion above captures the higher-order $C^k$ closure conditions for $c$, but not the $2$-dimensional $C^0$ closure condition $c(0)=c(1)$. In fact, the image of $F_{a,b}$ is locally a codimension-2 submanifold of $V_{a,b}$. In order to perform calculations for closed curves, it will be useful to characterize the two-dimensional normal space to this submanifold. Consider the function $f_{a,b}:V_{a,b}^\ast \rightarrow \C$ defined by
\begin{equation}\label{eqn:closure_condition}
f_{a,b}(q):=F_{a,b}^{-1}(q)(1)=\frac{1}{4b^2} \int_I |q|^2 \left(\frac{q}{|q|}\right)^{\frac{2b}{a}} \; \mathrm{d}t.
\end{equation}
The image of $\mathrm{Imm}(S^1,\C)/\mathrm{Tra}$ in $V_{a,b}^\ast$ is exactly the set $f_{a,b}^{-1}(0)$. We wish to calculate the gradient to $f_{a,b}(q)$ for $q$ in this submanifold.

The derivative of $f_{a,b}$ at $q$ in the direction of a variation $p$ is given by
\begin{align*}
Df_{a,b}(q)(p)&=\frac{1}{2b^2} \int_I \left(\frac{q}{|q|}\right)^{2b/a} \left(\mathrm{Re}(q\overline{p}) - i \frac{b}{a} \mathrm{Im}(q \overline{p})\right) \; \mathrm{d}t.
\end{align*}
The normal space to the submanifold of closed curves is spanned by the gradients of the real and imaginary parts of $f_{a,b}$. The real component of $Df_{a,b}(q)(p)$ is given by
\begin{align*}
\mathrm{Re} Df_{a,b}(q)(p) &= \frac{1}{2b^2} \int_I \mathrm{Re}\left(\left(\frac{q}{|q|}\right)^{2b/a}\right) \mathrm{Re}(q \overline{p}) +  \frac{b}{a} \mathrm{Im}\left(\left(\frac{q}{|q|}\right)^{2b/a}\right) \mathrm{Im}(q \overline{p}) \; \mathrm{d}t \\
&=\int_I \mathrm{Re} \left[ \frac{1}{2b^2} \left(\mathrm{Re} \left(\left(\frac{q}{|q|}\right)^{2b/a}\right) - i \frac{b}{a} \mathrm{Im} \left(\left(\frac{q}{|q|}\right)^{2b/a}\right) \right) q \overline{p} \right] \; \mathrm{d}t
\end{align*}
and it follows that
$$
\mathrm{grad} \left(\mathrm{Re}(f_{a,b})\right)_q =  \frac{1}{2b^2} \left(\mathrm{Re} \left(\left(\frac{q}{|q|}\right)^{2b/a}\right) - i \frac{b}{a} \mathrm{Im} \left(\left(\frac{q}{|q|}\right)^{2b/a}\right) \right) q.
$$
Similarly,
$$
\mathrm{grad} \left(\mathrm{Im}(f_{a,b})\right)_q =  \frac{1}{2b^2} \left(\mathrm{Im} \left(\left(\frac{q}{|q|}\right)^{2b/a}\right) + i \frac{b}{a} \mathrm{Re} \left(\left(\frac{q}{|q|}\right)^{2b/a}\right) \right) q.
$$

\begin{figure}[!t]
\begin{center}
\begin{tabular}{|c|cccc|}
\hline
$\frac{a}{2b}$&1.25&1.00&0.50&0.17\\
\hline
\includegraphics[height=.4in]{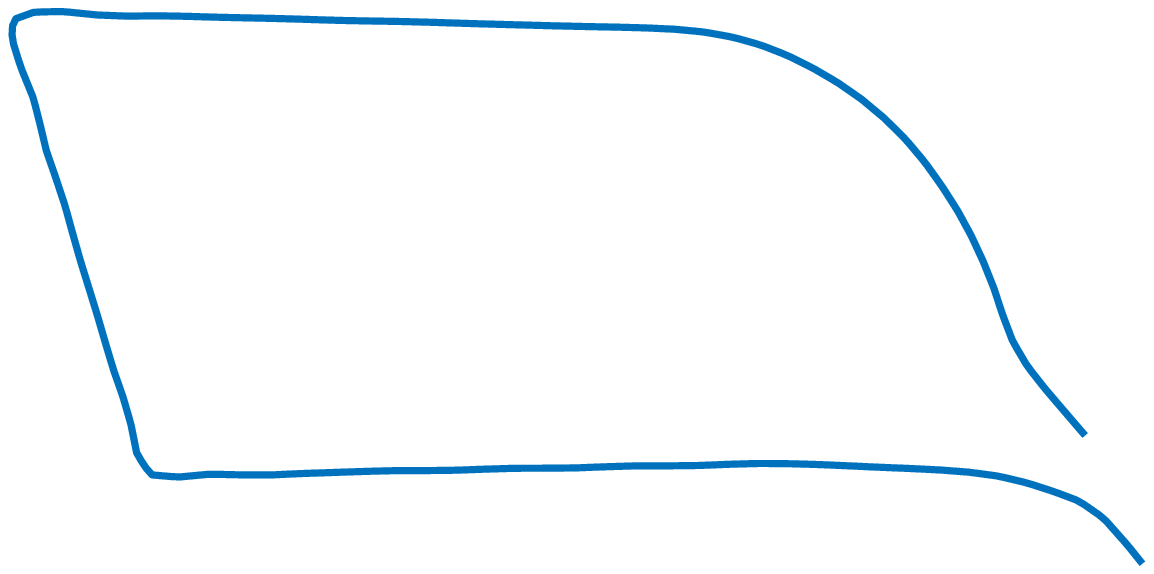}&\includegraphics[height=.4in]{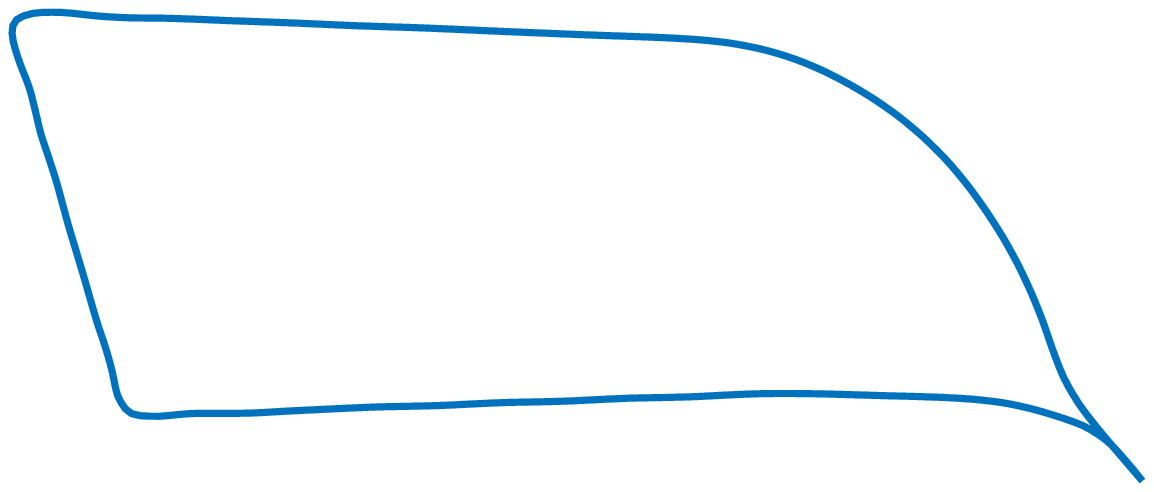}&\includegraphics[height=.4in]{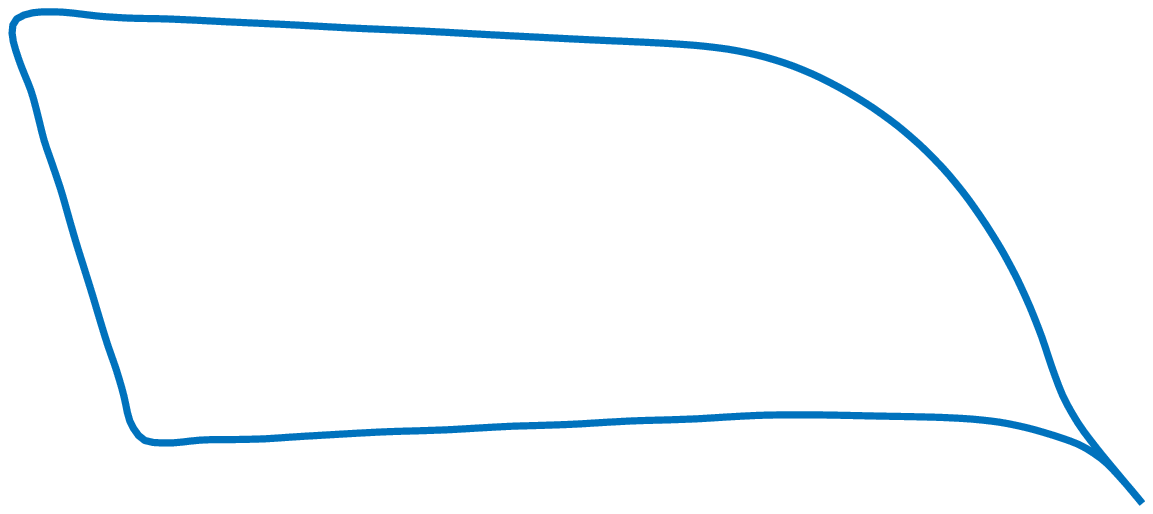}&\includegraphics[height=.4in]{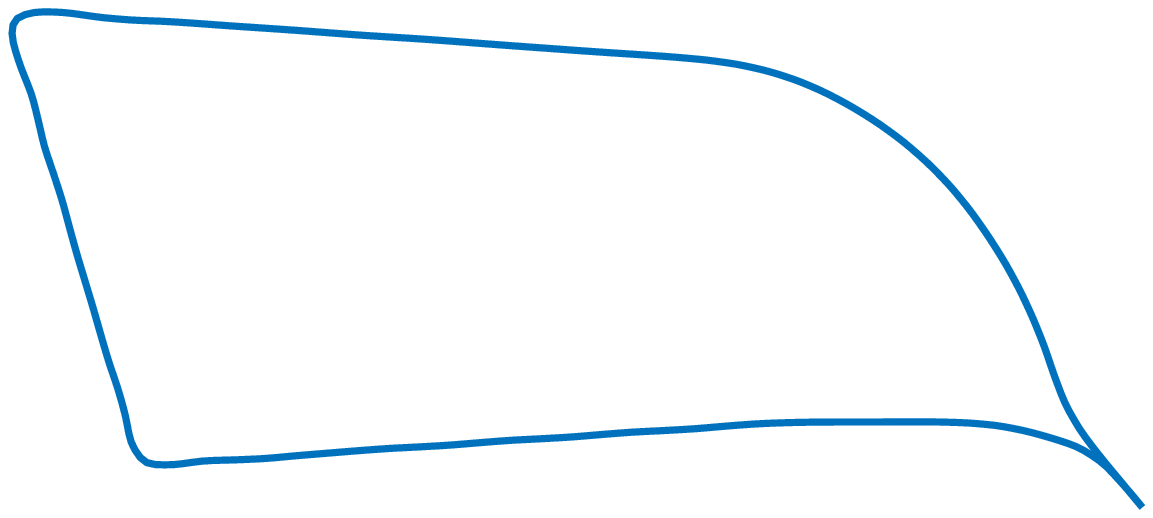}&\includegraphics[height=.4in]{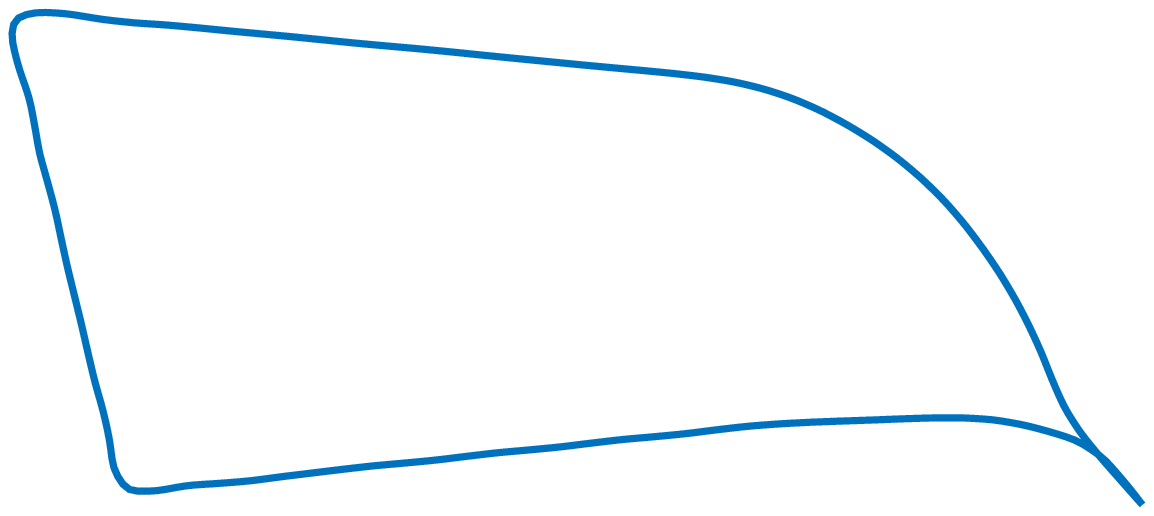}\\
\hline
\includegraphics[height=.79in]{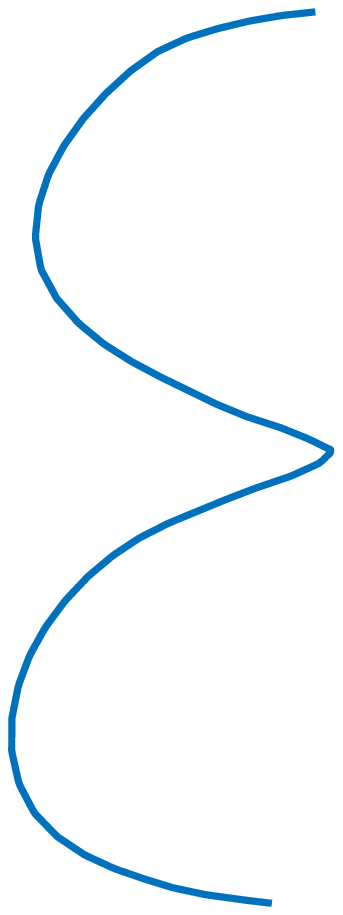}&\includegraphics[height=.79in]{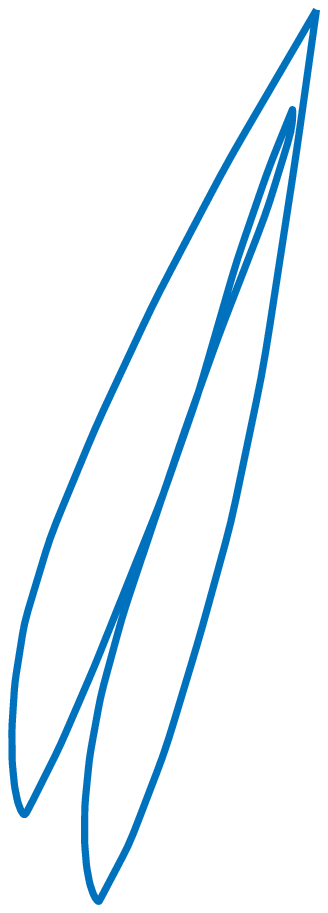}&\includegraphics[height=.4in]{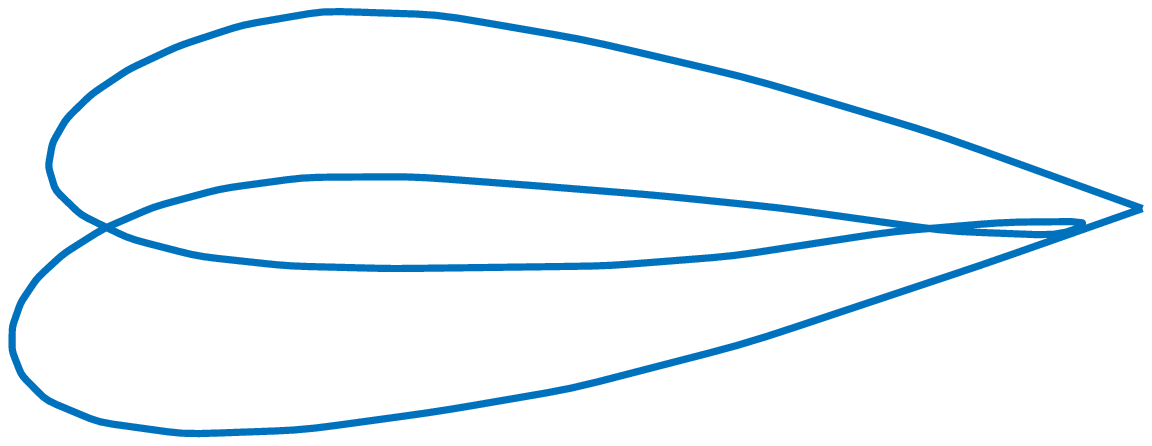}&\includegraphics[height=.4in]{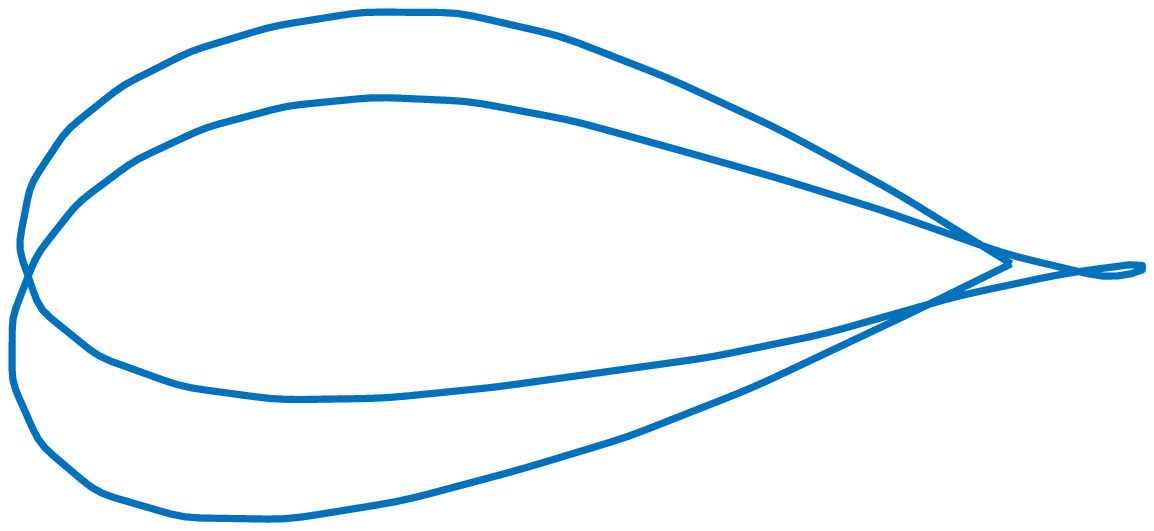}&\includegraphics[height=.4in]{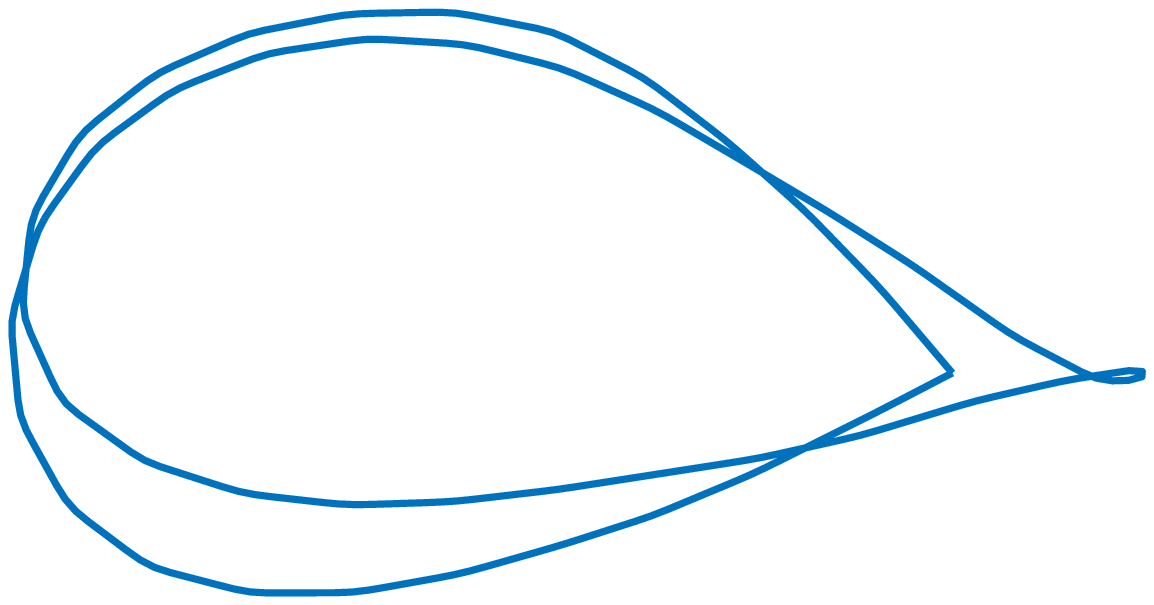}\\
\hline
\includegraphics[height=.65in]{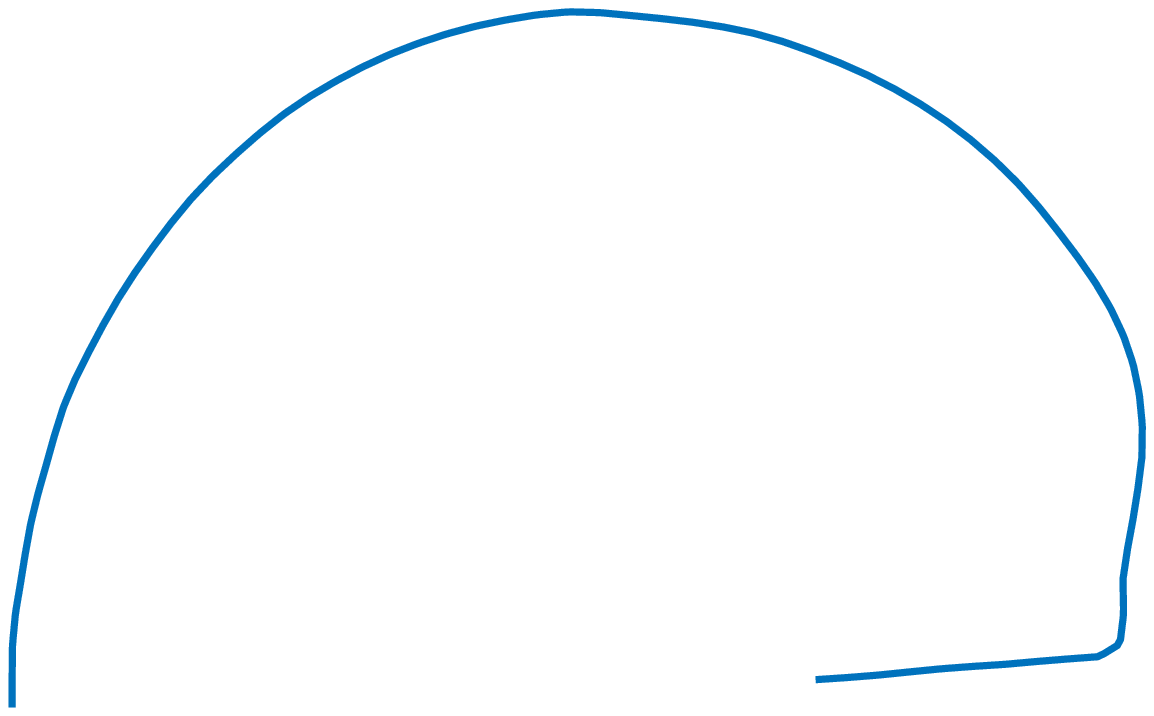}&\includegraphics[height=.79in]{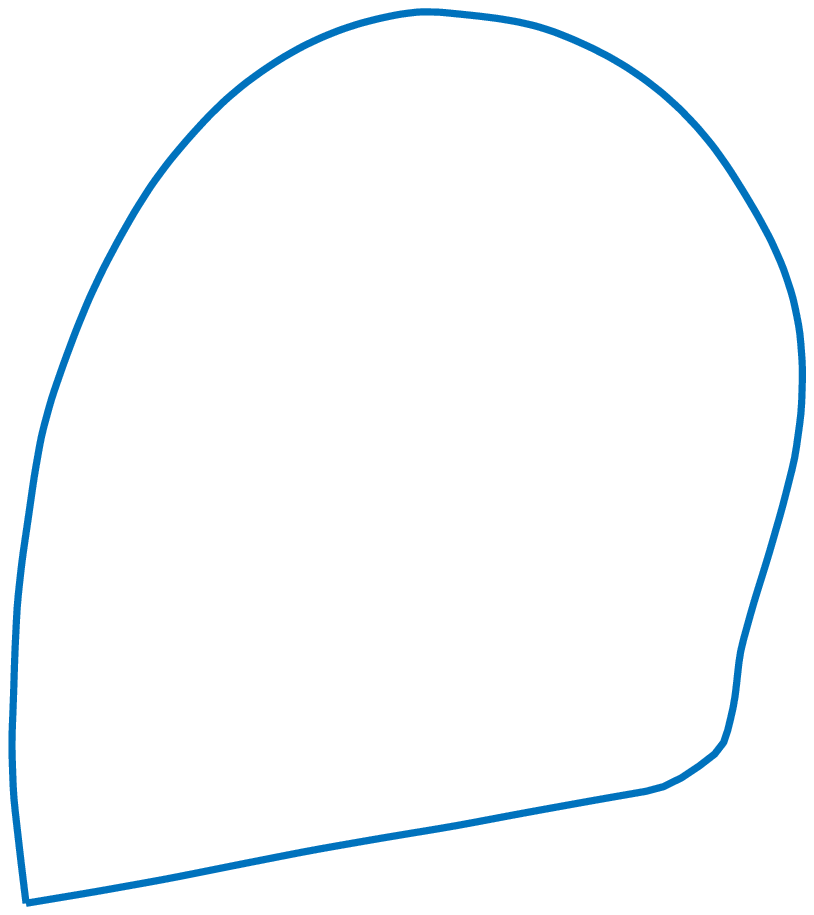}&\includegraphics[height=.79in]{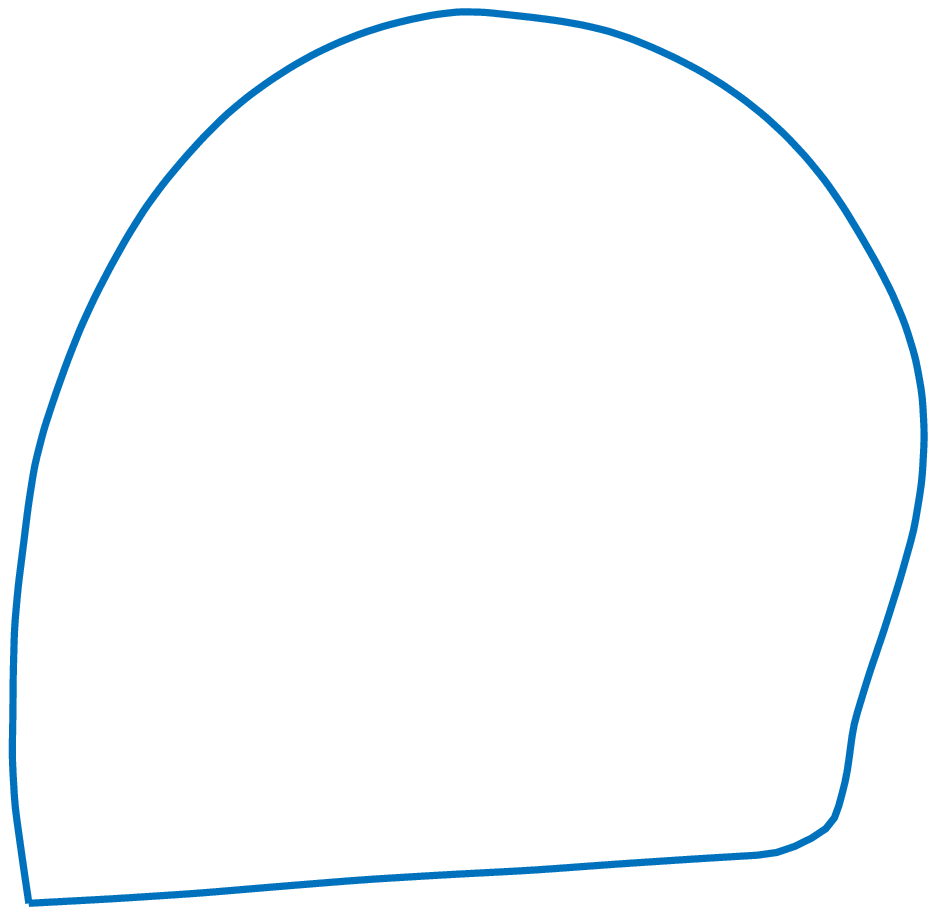}&\includegraphics[height=.79in]{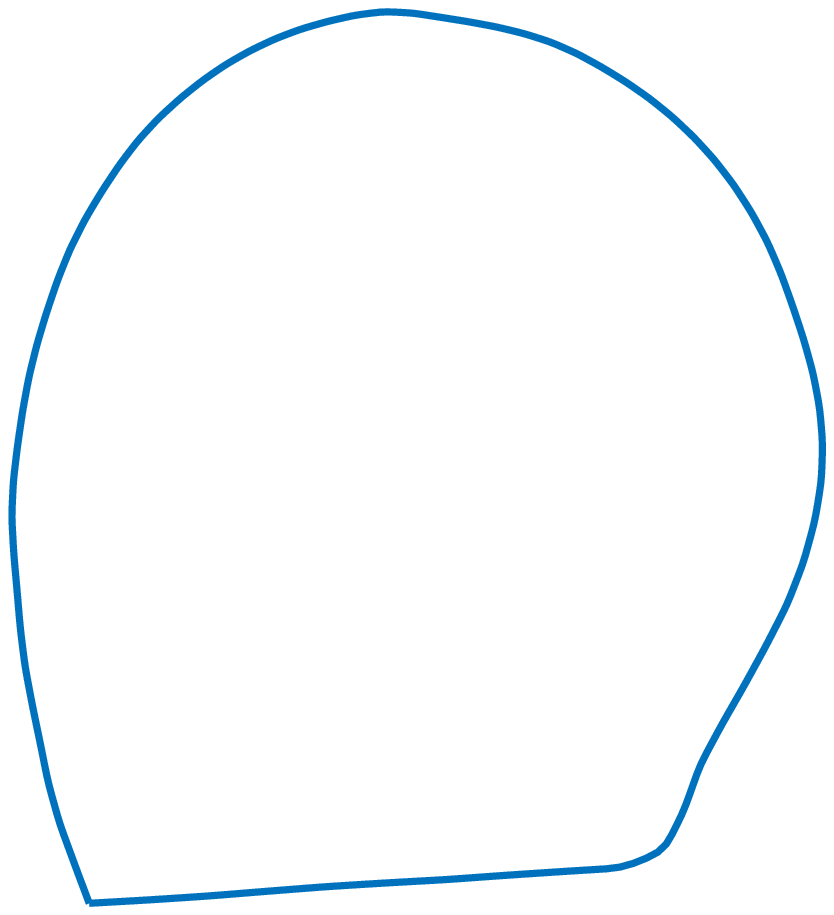}&\includegraphics[height=.79in]{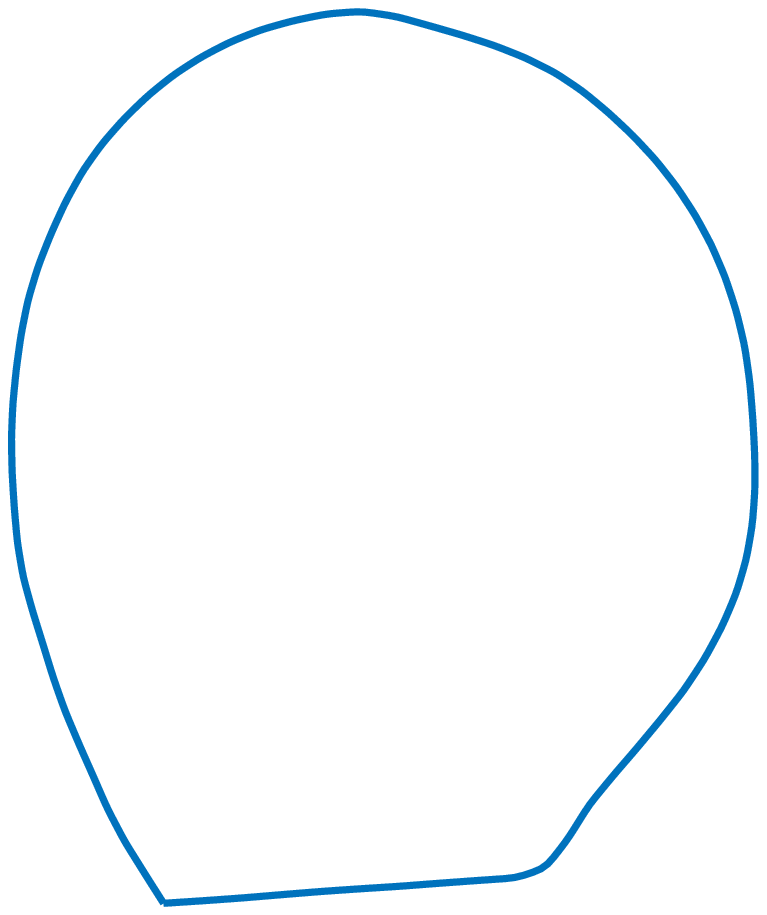}\\
\hline
\end{tabular}
\end{center}
\caption{Projections of an open curve into the preshape space of closed curves under different $F_{a,b}$-transforms.}
\label{fig:closure}
\end{figure}

An important tool for shape analysis of closed curves is the projection from the preshape space of open curves into the preshape space of closed curves. One cannot compute this projection analytically, but the above characterization of the submanifold of closed curves under the $F_{a,b}$-transform allows us to use a gradient descent algorithm for this purpose---see Figure \ref{fig:closure} for a few examples. The algorithm itself is similar in spirit to the one described in \cite{srivastava2011shape} for the SRVF transform.

\subsection{Relation to Previous Work}\label{sec:previous_work}

The family of maps $F_{a,b}$ includes transforms which have already appeared in the literature. Indeed,
$$
F_{1,1/2}(c)=|c'|^{1/2} \frac{c'}{|c'|} = \frac{c'}{|c'|^{1/2}},
$$
so that $F_{1,1/2}$ yields the SRVF transform introduced in \cite{joshi2007novel}. We also have
$$
F_{1/2,1/2}(c)= |c'|^{1/2} \left(\frac{c'}{|c'|}\right)^{1/2} = \sqrt{c'},
$$
and we see that $F_{1/2,1/2}$ is the complex square-root transform studied in \cite{younes2008metric}.

The SRVF was already shown to be a special case of a general family of transforms in \cite{bauer2014constructing}. There, the authors define a two-parameter family of transforms $R_{a,b}$ for $2b \geq a > 0$ by
\begin{align*}
R_{a,b}:\mathrm{Imm}(I,\R^2)/\mathrm{Tra} &\rightarrow C^\infty(I,\R^3) \\
c &\mapsto |c'|^{1/2}\left( \left(\begin{array}{c}
T \\
0 \end{array}\right) + \sqrt{4b^2-a^2}\left(\begin{array}{c}
0 \\
1 \end{array}\right)\right),
\end{align*}
where $T=D_s c$. The image of the $R_{a,b}$-transform is an open subset of a cone given by
$$
C_{a,b}=\{(x,y,z) \in \R^3 \mid (4b^2-a^2)(x^2+y^2)=a^2 z^2, \; z > 0 \}.
$$
The limiting cone $C_{1,1/2}$ can clearly be identified with $\R^2$ and then $R_{1,1/2}(c)$ gives the SRVF transform of $c$. It is shown in \cite{bauer2014constructing} that the $R_{a,b}$ transform pulls back the $L^2$ metric on $C^\infty(I,\R^3)$ to the elastic metric $g^{a,b}$.

We claim that the $R_{a,b}$-transforms correspond to $F_{a,b}$-transforms when $2b \geq a$, so that our main result generalizes \cite{bauer2014constructing} to work for all parameter choices. Indeed, writing $\R^3 \approx \C \times \R$, there is a projection map $\C^\ast \rightarrow C_{a,b}$ defined for each parameter choice with $2b \geq a >0$ in polar coordinates by
\begin{equation}\label{eqn:polar_projection}
(r,\theta) \mapsto \left(\frac{a}{2b} r \cos \left(\frac{2b}{a} \theta\right), \frac{a}{2b} r \sin \left(\frac{2b}{a} \theta \right), \frac{\sqrt{4b^2 - a^2}}{2b} r \right),
\end{equation} 
which extends to a local isometry  $p_{a,b}:C^\infty(I,\C^\ast) \rightarrow C^\infty(I,C_{a,b})$ with respect to the corresponding $L^2$ metrics. Expressing the map in the form
$$
w \mapsto \left(\frac{a |w|}{2b}\left(\frac{w}{|w|}\right)^{2b/a}, \frac{\sqrt{4b^2-a^2}}{2b} |w| \right),
$$
it is easy to see that $R_{a,b} = p_{a,b} \circ F_{a,b}$.

In fact, this idea can be extended to all parameter values. The cones $C_{a,b}$ can be understood in terms of \emph{Regge cones}; these are building blocks of the Regge calculus used to approximate Riemannian manifolds in theoretical physics \cite{regge1961general}. One constructs a two-dimensional Regge cone from polar coordinates $(r,\theta) \in \R_{\geq 0}^2$ with standard metric $\mathrm{d}r^2 + r^2 \mathrm{d}\theta^2$ by identifying points according to the relation $(r,\theta_1) \sim (r,\theta_2) \Leftrightarrow |\theta_1-\theta_2| = 2\pi -\theta_0$ for some choice of \emph{deficit angle} $\theta_0$ (allowed to be positive or negative). The map \eqref{eqn:polar_projection} is an isometric embedding of the Regge cone with deficit angle $\theta_0 = 1-\frac{a}{2b}$ onto a flat cone in Euclidean space. For parameters with $4b^2 < a^2$, replacing the trigonometric functions with their hyperbolic counterparts and the coefficient in the third coodinate with $\frac{\sqrt{a^2-4b^2}}{2b}$ yields an isometric map of the Regge cone with (negative) deficit angle $\theta_0$ onto a flat cone in Lorentz space (see \cite{gronwald1995non}).

\section{Shape Preserving Group Actions}\label{sec:group_actions}

\subsection{Rotation Actions and Fibers of $F_{a,b}$}\label{sec:fibers}

We now treat the fact that $F_{a,b}$ is multivalued for certain parameter choices and noninjective for others. The fibers of $F_{a,b}$ are closely related to the actions of the rotation group $\mathrm{SO}(2)$ on $\mathrm{Imm}(I,\C)/\mathrm{Tra}$ and $C^\infty(I,\C^\ast)$. Using the natural identification of $\mathrm{SO}(2)$ with $S^1$, we can represent the rotation actions as complex multiplication. That is, we express rotations in the respective spaces as $\exp(i \psi) c$ and $\exp(i\psi) q$, where $\exp(i \psi) \in S^1$.  As usual, multiplication in these formulas is performed pointwise as complex numbers. We have the following correspondence between the actions, which follows by an elementary computation.

\begin{lem}\label{lem:rotation_equivariance}
Let $c \in \mathrm{Imm}(I,\C)/\mathrm{Tra}$ and $\exp(i\psi) \in \mathrm{SO}(2)$. Then for all $a,b>0$,
$$
F_{a,b} (\exp(i\psi) c) = \exp\left(i\psi \frac{a}{2b}\right) F_{a,b}(c).
$$
\end{lem}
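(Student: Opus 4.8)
The plan is to reduce everything to the local polar representation \eqref{eqn:polar_rep}, in which the complex exponentiation is unambiguous. First I would record the effect of the rotation on the derivative: since $\exp(i\psi) \in S^1$ is constant with respect to the curve parameter, $(\exp(i\psi) c)' = \exp(i\psi) c'$. In particular $|(\exp(i\psi)c)'| = |\exp(i\psi)|\,|c'| = |c'|$, so the magnitude factor $|c'|^{1/2}$ in the definition of $F_{a,b}$ is untouched by the rotation, and all of the content sits in the unit-tangent factor $(c'/|c'|)^{a/2b}$.

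The only genuine subtlety is that for non-integer $\frac{a}{2b}$ the step $(\exp(i\psi) z)^{a/2b} = \exp(i\psi\tfrac{a}{2b})\, z^{a/2b}$ is branch-dependent and must be justified rather than asserted. I would handle this by passing to a continuous polar representation, exactly as in \eqref{eqn:polar_rep}: write $c' = r\exp(i\theta)$ with $\theta$ a continuous choice of argument on $I$. Then $(\exp(i\psi)c)' = r\exp(i(\theta+\psi))$, and since $\theta+\psi$ is again continuous on $I$, this is a legitimate continuous polar representation of the rotated curve's derivative, with the same magnitude $r$.

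Applying the local formula \eqref{eqn:polar_rep} to both curves and using the additivity of the argument inside a single continuous branch then gives
$$
F_{a,b}(\exp(i\psi)c) = 2b\, r^{1/2} \exp\left(i(\theta+\psi)\frac{a}{2b}\right) = \exp\left(i\psi\frac{a}{2b}\right) \cdot 2b\, r^{1/2}\exp\left(i\theta\frac{a}{2b}\right) = \exp\left(i\psi\frac{a}{2b}\right) F_{a,b}(c),
$$
which is the claim. The crucial point is simply that splitting the argument as $\theta+\psi$ within one continuous branch converts the additive rotation of the argument into the multiplicative rotation factor $\exp(i\psi\tfrac{a}{2b})$ on the target, thereby avoiding the ambiguity flagged for non-integer $\frac{a}{2b}$.

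I expect no serious obstacle: the entire difficulty is bookkeeping around the multivaluedness, and once one commits to the continuous-argument picture the computation is a one-line factorization, consistent with the claim that the lemma "follows by an elementary computation." If instead one prefers to state the result for the genuinely multivalued map, the same computation shows that the two image sets \eqref{eqn:image_set} coincide, since each is the orbit of a single value under multiplication by $\exp(i\tfrac{a}{b}k\pi)$, $k\in\Z$, and left multiplication by the scalar $\exp(i\psi\tfrac{a}{2b})$ carries one orbit onto the other.
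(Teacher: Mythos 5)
Your proposal is correct and is exactly the ``elementary computation'' the paper alludes to without writing out: rotating $c$ multiplies $c'$ by the unit constant $\exp(i\psi)$, leaves $r=|c'|$ unchanged, shifts the continuous argument to $\theta+\psi$, and the local formula \eqref{eqn:polar_rep} then yields the factor $\exp\left(i\psi\frac{a}{2b}\right)$. Your additional remark that the full image sets \eqref{eqn:image_set} are carried onto one another is a nice touch that matches how the paper later uses the lemma in Theorem \ref{thm:isometry}.
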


\begin{thm}\label{thm:isometry}
The $F_{a,b}$ transform induces a well-defined isometry
$$
\mathrm{Imm}(I,\C)/\{\mathrm{Tra,Rot}\} \rightarrow C^\infty(I,\C^\ast)/\mathrm{Rot}
$$
with respect to the metrics induced by $g^{a,b}$ and $g^{L^2}$, respectively.
\end{thm}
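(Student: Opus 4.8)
The plan is to first equip both quotient spaces with their Riemannian submersion metrics, then verify well-definedness, and finally reduce the isometry claim to Theorem~\ref{thm:pullback} together with the equivariance in Lemma~\ref{lem:rotation_equivariance}. To set up the quotient metrics, I would first note that rotations act by isometries on $(\mathrm{Imm}(I,\C)/\mathrm{Tra}, g^{a,b})$ and on $(C^\infty(I,\C^\ast), g^{L^2})$ --- for the latter because $g^{L^2}_q(w,z)=\mathrm{Re}\int_I w\overline{z}\,\mathrm{d}t$ is manifestly invariant under $q\mapsto \exp(i\psi)q$ and $w,z\mapsto \exp(i\psi)w,\exp(i\psi)z$. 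Since each $c$ is a non-constant immersion with $c(0)=0$, there is a $t$ with $c(t)\neq 0$, so the $S^1$-action is free and its orbits are one-dimensional circles. I would then describe the vertical subspace at $c$ as $\R\cdot ic$ (the tangent to $\psi\mapsto\exp(i\psi)c$) and at $q$ as $\R\cdot iq$, and define the quotient metrics by declaring the projections to be Riemannian submersions: the metric of two tangent vectors downstairs equals the $g^{a,b}$- (resp.\ $g^{L^2}$-) inner product of their horizontal lifts, where horizontal means orthogonal to the vertical subspace.

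Next I would address well-definedness. By the image-set description \eqref{eqn:image_set}, all branches of the multivalued $F_{a,b}(c)$ differ by factors $\exp(i\frac{a}{b}k\pi)$, hence lie in a single $\mathrm{Rot}$-orbit; therefore $c\mapsto [F_{a,b}(c)]$ is genuinely single-valued into $C^\infty(I,\C^\ast)/\mathrm{Rot}$. By Lemma~\ref{lem:rotation_equivariance}, $F_{a,b}(\exp(i\psi)c)=\exp(i\psi\frac{a}{2b})F_{a,b}(c)$ is again a rotation of $F_{a,b}(c)$, so the map is constant on source rotation orbits and descends to a well-defined map $\widetilde{F}_{a,b}$ on $\mathrm{Imm}(I,\C)/\{\mathrm{Tra},\mathrm{Rot}\}$.

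For the isometry claim I would differentiate the equivariance relation at $\psi=0$, obtaining $DF_{a,b}(c)(ic)=\frac{a}{2b}\,iF_{a,b}(c)$. Since $\frac{a}{2b}\neq 0$, this shows $DF_{a,b}(c)$ carries the vertical line $\R\cdot ic$ bijectively onto the vertical line $\R\cdot iF_{a,b}(c)$. By Theorem~\ref{thm:pullback}, $DF_{a,b}(c)$ is a linear isometry from $(T_c,g^{a,b})$ to $(T_{F_{a,b}(c)},g^{L^2})$; as linear isometries preserve orthogonal complements, it therefore maps the horizontal subspace at $c$ isometrically onto the horizontal subspace at $F_{a,b}(c)$. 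Consequently horizontal lifts correspond under $DF_{a,b}$, and computing both quotient metrics through horizontal lifts yields $\widetilde{F}_{a,b}^{\,\ast}(\text{quotient }g^{L^2})=\text{quotient }g^{a,b}$, as desired.

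The main obstacle is conceptual rather than computational. The genuine subtlety is reconciling the multivaluedness of $F_{a,b}$ with a single-valued descended map --- this is exactly what the target rotation quotient absorbs --- and, more delicately, making the vertical/horizontal splitting rigorous in this infinite-dimensional weak-Riemannian setting so that horizontal lifts provably map to horizontal lifts. This last point relies essentially on Theorem~\ref{thm:pullback} providing an exact tangentwise isometry (not merely a conformal or injective differential), which is what guarantees preservation of orthogonality to the orbit directions.
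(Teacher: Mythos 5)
Your proposal is correct and follows essentially the same route as the paper: well-definedness comes from the observation that all branches in \eqref{eqn:image_set} lie in a single rotation orbit together with Lemma~\ref{lem:rotation_equivariance}, and the isometry claim is reduced to Theorem~\ref{thm:pullback} plus rotation-invariance of $g^{a,b}$ and $g^{L^2}$. Your treatment is in fact more explicit than the paper's: you spell out the Riemannian-submersion structure of the quotients (vertical lines $\R\cdot ic$ and $\R\cdot iq$, the identity $DF_{a,b}(c)(ic)=\tfrac{a}{2b}iF_{a,b}(c)$ from differentiating the equivariance relation, and preservation of horizontal subspaces because a tangentwise isometry preserves orthogonal complements), whereas the paper simply asserts that the local isometry ``descends'' since both metrics are $\mathrm{SO}(2)$-invariant. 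The one ingredient of the paper's proof absent from yours is bijectivity of the descended map: the theorem asserts a global isometry, and the paper obtains this by running the same well-definedness argument on the explicit local inverse \eqref{eqn:fb_inverse} to produce a descended two-sided inverse. You should add that sentence; everything else in your argument goes through.
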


We abuse notation and continue to denote the induced isometry by $F_{a,b}$. The induced map is defined by
\begin{equation}\label{eqn:induced_map}
F_{a,b}([c])=\left[F_{a,b}(c)\right],
\end{equation}
where we use brackets to denote the $\mathrm{SO}(2)$-orbit of a parameterized plane curve. The right side denotes the equivalence class of \emph{any} branch of $F_{a,b}(c)$ in the case that the map is multivalued.

\begin{proof}
We first note that all $F_{a,b}$-images of a curve $c$ in the list (\ref{eqn:image_set}) are related by rotations, so that $\left[F_{a,b}(c)\right]$ is a well-defined element of $C^\infty(I,\C^\ast)/\mathrm{Rot}$. Lemma \ref{lem:rotation_equivariance} then implies that the induced map is well-defined. Similar arguments hold for the obvious map induced by the local inverse $F_{a,b}^{-1}$, giving a well-defined inverse map
$$
F_{a,b}^{-1}:  C^\infty(I,\C^\ast)/\mathrm{Rot} \rightarrow \mathrm{Imm}(I,\C)/\{\mathrm{Tra,Rot}\},
$$
so the induced map $F_{a,b}$ is a bijection. Finally, it is easy to see that $g^{a,b}$ and $g^{L^2}$ are invariant under the action of $\mathrm{SO}(2)$. Therefore, the local isometry of Theorem \ref{thm:pullback} descends to a global isometry on the quotient spaces.
\end{proof}

We have the following immediate corollary.

\begin{cor}\label{cor:isometry}
The $F_{a,b}$ transform induces a well-defined isometric embedding
$$
\mathrm{Imm}(S^1,\C)/\{\mathrm{Tra,Rot}\} \hookrightarrow V^\ast_{a,b}/\mathrm{Rot},
$$
where $V^\ast_{a,b}$ is the space of curves defined in Section \ref{sec:closed_curves}.
\end{cor}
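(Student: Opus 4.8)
The plan is to obtain the statement as a straightforward restriction of the global isometry constructed in Theorem \ref{thm:isometry}. Recall from Section \ref{sec:closed_curves} that, under the identification $S^1 = [0,1]/(0\sim 1)$, the preshape space $\mathrm{Imm}(S^1,\C)/\mathrm{Tra}$ sits inside $\mathrm{Imm}(I,\C)/\mathrm{Tra}$ as a submanifold of infinite codimension, cut out by the periodicity conditions $c^{(k)}(0)=c^{(k)}(1)$ for all $k\geq 0$ together with the $C^0$ condition $c(0)=c(1)$. First I would record that the rotation action descends correctly: rotating a closed curve yields a closed curve, so $\mathrm{SO}(2)$ preserves $\mathrm{Imm}(S^1,\C)/\mathrm{Tra}$, and consequently $\mathrm{Imm}(S^1,\C)/\{\mathrm{Tra,Rot}\}$ is a submanifold of $\mathrm{Imm}(I,\C)/\{\mathrm{Tra,Rot}\}$.

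Next I would verify that $F_{a,b}$ carries this submanifold into $V^\ast_{a,b}$. This is precisely the computation opening Section \ref{sec:closed_curves}: for a closed curve $c$ the polar form \eqref{eqn:polar_rep} gives $F_{a,b}(c)(1)=F_{a,b}(c)(0)\exp(i\frac{a}{b}\pi\,\mathrm{ind}(c))$, and differentiating yields the higher-order closure conditions \eqref{eqn:closure_condition_1} with $\ell=\mathrm{ind}(c)$, so $F_{a,b}(c)\in V^\ast_{a,b}(\mathrm{ind}(c))\subset V^\ast_{a,b}$. I would then check that the rotation action on the target preserves $V^\ast_{a,b}$: if $q$ satisfies \eqref{eqn:closure_condition_1} then so does $\exp(i\phi)q$ for every $\phi$, since multiplication by the constant $\exp(i\phi)$ commutes with differentiation and with evaluation at the endpoints. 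Hence the induced map of Theorem \ref{thm:isometry} restricts to a well-defined map $\mathrm{Imm}(S^1,\C)/\{\mathrm{Tra,Rot}\}\to V^\ast_{a,b}/\mathrm{Rot}$.

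Finally, I would invoke the general principle that the restriction of a Riemannian isometry to a submanifold, equipped with the induced metric, is an isometric embedding. Since Theorem \ref{thm:isometry} supplies a bijective isometry from $\mathrm{Imm}(I,\C)/\{\mathrm{Tra,Rot}\}$ to $C^\infty(I,\C^\ast)/\mathrm{Rot}$, its restriction to the submanifold of closed curves is a diffeomorphism onto a submanifold of $V^\ast_{a,b}/\mathrm{Rot}$ which pulls back the induced $L^2$ metric to the induced $g^{a,b}$ metric; this is exactly the asserted isometric embedding.

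The main obstacle I anticipate is not the isometry bookkeeping---which is automatic once the ambient statement of Theorem \ref{thm:isometry} is in hand---but rather the care required in the infinite-dimensional setting to justify that $\mathrm{Imm}(S^1,\C)/\mathrm{Tra}$ and the image $f_{a,b}^{-1}(0)\subset V^\ast_{a,b}$ are genuine (tame Fr\'echet) submanifolds, so that the phrases \emph{induced metric} and \emph{isometric embedding} carry their usual meaning. I would lean on the manifold-of-mappings framework of \cite{hamilton1982inverse} invoked earlier, and on the explicit description of the normal space to $f_{a,b}^{-1}(0)$ through $\mathrm{grad}(\mathrm{Re}(f_{a,b}))$ and $\mathrm{grad}(\mathrm{Im}(f_{a,b}))$ computed in Section \ref{sec:closed_curves}, to confirm the codimension-$2$ submanifold structure of the image.
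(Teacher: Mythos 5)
Your proposal is correct and follows exactly the route the paper intends: the corollary is stated as an immediate consequence of Theorem \ref{thm:isometry} combined with the computation at the start of Section \ref{sec:closed_curves} showing that images of closed curves satisfy the closure conditions \eqref{eqn:closure_condition_1} and hence land in $V^\ast_{a,b}$. Your additional remarks on rotation-invariance of $V^\ast_{a,b}$ and on the Fr\'echet-submanifold bookkeeping are sensible elaborations of details the paper leaves implicit.
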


\subsection{The Scaling Action}

The group of positive real numbers $\R_{>0}$ acts on a parameterized curve by uniform scaling. An easy calculation shows that the scaling action interacts with the $F_{a,b}$-transform as follows: for $\lambda \in \R_{>0}$ and $c \in \mathrm{Imm}(I,\C)/\mathrm{Tra}$,
$$
F_{a,b}(\lambda c) = \lambda^{1/2} F_{a,b}(c).
$$
It will be convenient to represent the quotient of the preshape space by this scaling action as
\begin{equation}\label{eqn:fixed_length_space}
\mathrm{Imm}(I,\C)/\{\mathrm{Tra,Sca}\} \approx \{c \in \mathrm{Imm}(I,\C) \mid c(0)=0, \; \mathrm{length}(c)=1\}.
\end{equation}

For $\Sigma = I$ or $S^1$ and $a,b > 0$, define the \emph{Hilbert sphere of radius $r$} to be the space
$$
\mathcal{H}_{a,b}^\Sigma(r) =  \left\{\begin{array}{ll}
\left\{q \in C^\infty(I,\C) \mid \int_\Sigma |q|^2 \; \mathrm{d}t = r^2 \right\} & \Sigma = I \\
\left\{q \in V_{a,b} \mid \int_\Sigma |q|^2 \; \mathrm{d}t = r^2 \right\}  & \Sigma = S^1.
\end{array}\right.
$$

\begin{prop}\label{prop:hilbert_sphere}
The $F_{a,b}$ transform sends $\mathrm{Imm}(\Sigma,\C)/\{\mathrm{Tra,Sca}\} $ into the Hilbert sphere $\mathcal{H}^\Sigma_{a,b}(2b)$. It induces an isometry between $\mathrm{Imm}(\Sigma,\C)/\{\mathrm{Tra,Sca,Rot}\}$ and its image in $\mathcal{H}^\Sigma_{a,b}(2b)/\mathrm{Rot}$.
\end{prop}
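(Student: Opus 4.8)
The plan is to split the statement into its two assertions and to reduce the isometry claim to Theorem \ref{thm:pullback} by realizing both quotients as slices (submanifolds) of the ambient spaces. For the containment, I would use the fixed-length representation \eqref{eqn:fixed_length_space} and compute the pointwise modulus of $F_{a,b}(c)$. Since $c'/|c'|$ is unit-modulus, any branch of $(c'/|c'|)^{a/2b}$ is also unit-modulus, so the defining formula gives $|F_{a,b}(c)| = 2b|c'|^{1/2}$ and hence $|F_{a,b}(c)|^2 = 4b^2|c'|$ pointwise. Integrating over $\Sigma$ yields $\int_\Sigma |F_{a,b}(c)|^2 \, \mathrm{d}t = 4b^2\,\mathrm{length}(c) = 4b^2 = (2b)^2$ for any length-one representative, so $F_{a,b}(c) \in \mathcal{H}^\Sigma_{a,b}(2b)$. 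In the closed case $\Sigma = S^1$ one additionally needs the image to lie in $V_{a,b}$, which is exactly the content of Corollary \ref{cor:isometry}.

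For the isometry, I would realize $\mathrm{Imm}(\Sigma,\C)/\{\mathrm{Tra,Sca}\}$ as the length-one slice of \eqref{eqn:fixed_length_space}, equipped with the submanifold metric induced by $g^{a,b}$, and its image as the corresponding slice of the radius-$2b$ Hilbert sphere with the submanifold metric induced by $g^{L^2}$. The scaling relation $F_{a,b}(\lambda c) = \lambda^{1/2} F_{a,b}(c)$ together with the modulus computation shows that $F_{a,b}$ carries the length-one slice bijectively onto its image in $\mathcal{H}^\Sigma_{a,b}(2b)$: a sphere point $q = F_{a,b}(c)$ satisfies $\mathrm{length}(c) = \frac{1}{4b^2}\int_\Sigma |q|^2 \, \mathrm{d}t = 1$, so the inverse also respects the slices. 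Since Theorem \ref{thm:pullback} says $F_{a,b}$ is an ambient isometry, its restriction to the slice is an isometry onto the image slice for the induced metrics, because an isometry of ambient manifolds always restricts to an isometry of submanifolds that correspond under the map. Finally I would pass to the rotation quotient exactly as in Theorem \ref{thm:isometry}: rotation acts by isometries for both $g^{a,b}$ and $g^{L^2}$ and preserves both the length-one slice and the radius-$2b$ sphere (rotation preserves length and $L^2$ norm), and Lemma \ref{lem:rotation_equivariance} shows the two rotation actions correspond under $F_{a,b}$, so the map descends to an isometry onto its image in $\mathcal{H}^\Sigma_{a,b}(2b)/\mathrm{Rot}$.

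The main conceptual point, and the step I expect to require the most care, is identifying the correct metric on the scaling quotient. The scaling action is a homothety rather than an isometry: scaling $c$ by $\lambda$ multiplies $g^{a,b}$ by $\lambda$, matching the fact that it multiplies $g^{L^2}$ by $\lambda$ on the $q$-side through the square-root scaling. Consequently the natural metric on $\mathrm{Imm}(\Sigma,\C)/\{\mathrm{Tra,Sca}\}$ is the restriction of $g^{a,b}$ to a slice rather than a Riemannian-submersion metric, and one must present both quotients as slices with their induced submanifold metrics before invoking the restriction-of-isometry argument. Once this framing is fixed, the proof reduces to the observation that an ambient isometry restricts to an isometry of corresponding submanifolds; the remaining scaling- and rotation-equivariance facts are the elementary computations recorded above and in Lemma \ref{lem:rotation_equivariance}.
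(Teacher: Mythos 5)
Your proposal is correct and follows essentially the same route as the paper: the containment is the same pointwise modulus computation $\int_\Sigma |F_{a,b}(c)|^2\,\mathrm{d}t = 4b^2\,\mathrm{length}(c)$, and the isometry claim is obtained by restricting the isometries of Theorem \ref{thm:isometry} and Corollary \ref{cor:isometry} to the rotation-invariant Hilbert sphere. Your additional care in framing the scaling quotient as a length-one slice with the induced submanifold metric (since scaling acts by homothety, not isometry) makes explicit a point the paper leaves implicit, but it is the same argument.
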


\begin{proof}
Let $c \in \mathrm{Imm}(\Sigma,\C)/\mathrm{Tra}$ have length one. Then,
$$
\int_\Sigma |F_{a,b}(c) |^2 \; \mathrm{d}t = \int_\Sigma \left(2b |c'|^{1/2} \right)^2 \; \mathrm{d}t = 4b^2 \int_\Sigma |c'| \; \mathrm{d}t = 4b^2.
$$
The second statement follows by noting that $\mathcal{H}_{a,b}^\Sigma(2b)$ is invariant under the rotation action of $\mathrm{SO}(2)$ so that we can restrict the isometries of Theorem \ref{thm:isometry} and Corollary \ref{cor:isometry}.
\end{proof}

\subsection{The Reparameterization Action}

The final shape-preserving group action to consider is the action of $\mathrm{Diff}^+(I)$ on $\mathrm{Imm}(I,\C)$ by orientation-preserving reparameterizations. An element $\gamma \in \mathrm{Diff}^+(I)$ of the diffeomorphism group also acts on $q \in C^\infty(I,\C^\ast)$ by the formula
\begin{equation}\label{eqn:diff_action_transform}
\gamma \ast q := \sqrt{|\gamma'|} (q \circ \gamma).
\end{equation}

\begin{prop}\label{prop:diff_equivariant}
The map $F_{a,b}$ is equivariant with respect to the $\mathrm{Diff}^+(I)$-actions on $\mathrm{Imm}(I,\C)$ and $\C^\infty(I,\C^\ast)$ defined above.
\end{prop}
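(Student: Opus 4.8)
The plan is to verify the equivariance identity $F_{a,b}(c\circ\gamma) = \gamma \ast F_{a,b}(c)$ directly, by a pointwise chain-rule computation, where $c\circ\gamma$ denotes the reparameterization action on $\mathrm{Imm}(I,\C)$. First I would differentiate the reparameterized curve: for $\gamma \in \mathrm{Diff}^+(I)$ we have $(c\circ\gamma)'(t) = \gamma'(t)\,c'(\gamma(t))$, and since $\gamma$ is orientation-preserving, $\gamma'(t) > 0$, so $\gamma'(t) = |\gamma'(t)|$ is a positive real scalar. The essential observation is that multiplication by this positive scalar leaves the angular part of the derivative untouched: the unit tangent satisfies $(c\circ\gamma)'/|(c\circ\gamma)'| = (c'/|c'|)\circ\gamma$, while the magnitude picks up the Jacobian factor, $|(c\circ\gamma)'| = |\gamma'|\cdot(|c'|\circ\gamma)$.

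Substituting these two facts into the defining formula for $F_{a,b}$ then finishes the computation. The magnitude term factors as $2b\,|(c\circ\gamma)'|^{1/2} = \sqrt{|\gamma'|}\cdot 2b\,(|c'|\circ\gamma)^{1/2}$, and the complex-power term $\left((c\circ\gamma)'/|(c\circ\gamma)'|\right)^{a/2b}$ equals $\left((c'/|c'|)^{a/2b}\right)\circ\gamma$. Collecting these yields exactly $\sqrt{|\gamma'|}\,(F_{a,b}(c)\circ\gamma)$, which is the definition of $\gamma \ast F_{a,b}(c)$ in \eqref{eqn:diff_action_transform}.

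The only point requiring care is the multivaluedness of $F_{a,b}$ coming from the complex exponentiation, and I would address it at the level of the continuous polar representation \eqref{eqn:polar_rep}. If $c' = r\exp(i\theta)$ with $\theta$ continuous, then $(c\circ\gamma)' = (\gamma')\,(r\circ\gamma)\exp\!\big(i(\theta\circ\gamma)\big)$, and because $\gamma' > 0$ contributes no rotation, $\theta\circ\gamma$ is a legitimate continuous argument for $(c\circ\gamma)'$. Hence the branch of $F_{a,b}(c\circ\gamma)$ selected by this argument is precisely the precomposition with $\gamma$ of the chosen branch of $F_{a,b}(c)$, so the equivariance holds branch-by-branch and no ambiguity is introduced by the reparameterization. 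Since the entire argument is a routine pointwise chain-rule calculation, I anticipate no genuine obstacle; the one subtlety worth recording is that orientation-preservation ($\gamma' > 0$) is exactly what guarantees the angular part transforms without an extra twist.
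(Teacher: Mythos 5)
Your proposal is correct and follows essentially the same route as the paper's proof, which is exactly the chain-rule computation $(c\circ\gamma)' = \gamma'\,(c'\circ\gamma)$ with $\gamma'>0$ factored out of the magnitude while leaving the unit-tangent (and hence the complex power) untouched. Your added remark that $\theta\circ\gamma$ is a legitimate continuous argument, so the identity holds branch-by-branch, is a careful refinement the paper leaves implicit, but it does not change the argument.
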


\begin{proof}
Let $c \in \mathrm{Imm}(\Sigma,\C)$ and $\gamma \in \mathrm{Diff}^+(\Sigma)$. Then
\begin{align*}
F_{a,b}(c \circ \gamma) &= 2b |\gamma' ( c' \circ \gamma)|^{1/2} \left(\frac{\gamma' ( c' \circ \gamma)}{|\gamma' (c' \circ \gamma)|}\right)^\frac{a}{2b} \\
&= |\gamma'|^{1/2} 2b |c' \circ \gamma|^{1/2}\left(\frac{c' \circ \gamma}{|c' \circ \gamma|}\right)^\frac{a}{2b} = |\gamma'|^{1/2} F_{a,b}(c) \circ \gamma.
\end{align*}
\end{proof}

We likewise wish to consider the action of $\mathrm{Diff}^+(S^1)$ on $\mathrm{Imm}(S^1,\C)$ and the corresponding action in transform space. To understand the reparameterization action in transform space for closed curves, it is convenient to identify $V_{a,b}(\ell)$ with the vector space
$$
\widetilde{V}_{a,b}(\ell) = \left\{q \in \C^\infty(\R,\C^\ast) \mid q(t+1)=q(t) \cdot \exp\left(i \frac{a}{b} \pi \ell\right)\right\}.
$$
Under this identification, the $\mathrm{Diff}^+(S^1)$-action on transform space is once again given by \eqref{eqn:diff_action_transform} in the sense that a similar equivariance result holds in this setting.

\section{Geodesics Between Curves}\label{sec:computing_geodesics}

\subsection{Geodesics for Open Curves}\label{sec:geodesics_open_curves}

For open curves, Theorem \ref{thm:isometry} provides an isometry of Riemannian manifolds
$$
F_{a,b}:\left(\mathrm{Imm}(I,\C)/\{\mathrm{Tra,Rot}\},g^{a,b}\right) \rightarrow \left(C^\infty(I,\C^\ast)/\mathrm{Rot},g^{L^2}\right).
$$
We can therefore compute geodesics between curves in the former space by translating the problem to the simpler target space, $C^\infty(I,\C)$, where geodesic paths are simply straight lines.  Passing to the open subset $C^\infty(I,\C^\ast)$, we lose geodesic completeness as the geodesic joining $q_0,q_1 \in C^\infty(I,\C^\ast)$ given by $q_u=(1-u)q_0 + u q_1$, $u \in [0,1]$, may pass through a curve with $q_u(t)=0$ for some $t$. Nonetheless, geodesic distance in the larger space still induces a metric on the restricted space, and curves which are sufficiently close tend to be joined by a nonsingular geodesic in practice. The geodesic distance between a pair of curves is
$$
d^{L^2}(q_0,q_1)=\| q_0 - q_1\|_{L^2} = \left(\int_I |q_0-q_1|^2 \; \mathrm{d}t\right)^{1/2}.
$$
Geodesics in the quotient  $\mathrm{Imm}(I,\C)/\{\mathrm{Tra, Rot}\} \approx C^\infty(I,\C^\ast)/\mathrm{Rot}$ are realized as geodesics between curves in the total space $C^\infty(I,\C)$ after a preprocessing step whereby the curves are aligned over $\mathrm{SO}(2)$ using a standard algorithm called Procrustes analysis (essentially a singular value decomposition problem). Furthermore, we calculate explicit geodesics in $\mathrm{Imm}(I,\C)/\{\mathrm{Tra, Sca, Rot}\}$ by using Proposition \ref{prop:hilbert_sphere} to transfer the problem to the Hilbert sphere. The geodesic joining a pair of curves $q_0,q_1 \in \mathcal{H}^I_{a,b}(2b)$ is given by spherical interpolation
$$
q_u=\frac{1}{\sin(D)} \left(\sin((1-u)D) q_0 + \sin(u D) q_1\right),
$$
where $D=\arccos\left(\left<q_0,q_1\right>_{L^2}/4b^2\right)$ is geodesic distance in the Hilbert sphere. Geodesics in the quotient $\mathcal{H}^I_{a,b}(2b)/\mathrm{Rot}$ are treated by the same optimization procedure as in the flat case.

\subsection{Geodesics for Closed Curves}

Geodesics in $\mathrm{Imm}(S^1,\C)/\{\mathrm{Tra, Rot}\}$ can be treated in a similar manner to the open case; that is, we transfer the problem to the simpler space $V_{a,b}/\{\mathrm{Rot}\}$. Each vector space $V_{a,b}(\ell)$ is flat, so its geodesics are straight lines. However, the fact that the image of the isometry induced by $F_{a,b}$ is codimension-2 in $V_{a,b}$ makes the procedure more complicated.

We first note that the problem has a remarkable simplification in the case that $a=b$, where the closure condition $f_{a,b}(q)=0$ (see \eqref{eqn:closure_condition}) reduces to $L^2$-orthogonality of the coordinate functions of $q$. This was expoited by Younes et.\ al.\ in \cite{younes2008metric} to give explicit geodesics for closed curves by relating the space of closed curves to an infinite-dimensional Stiefel manifold.

Apparently, such a simplification is unique to the case $a=b$, and the space of closed curves is not isometric to a classical manifold otherwise. Fortunately, the low codimension of the space of closed curves in the flat space $V_{a,b}$ allows us to approximate geodesics in the submanifold numerically. There are several algorithms in the literature which are easily adapted to our setting, such as parallel transport-based path-straightening \cite{srivastava2011shape} and other gradient descent-based \cite{bauer2014constructing,you2015riemannian} methods. In the examples provided in Section \ref{sec:applications}, we use a simplistic projection-based algorithm.

\subsection{Optimized Geodesics in the Shape Spaces}\label{sec:optimized_geodesics}

To compute geodesics in the shape space of unparameterized curves with respect to the metric induced by $g^{a,b}$, we pass to the quotient of the parameterized curve space by the action of the diffeomorphism group $\mathrm{Diff}^+(I)$ ($\mathrm{Diff}^+(S^1)$ in the case of closed curves). The geodesic between $\mathrm{Diff}^+(I)$-orbits $[c_1]$ and $[c_2]$ of parameterized curves $c_j$ is realized in practice as the geodesic between $c_1$ and $\widetilde{c}_2$ in the total space $\mathrm{Imm}(I,\C)$, where $\widetilde{c}_2 = c_2 \circ \gamma$ and
\begin{equation}\label{eqn:optimal_reparameterization}
\gamma = \mathrm{arginf} \left\{\mathrm{dist}_{g^{a,b}} (c_1,c_2(\gamma)) \mid \gamma \in \mathrm{Diff}^+(I)\right\}.
\end{equation}
Here $\mathrm{dist}_{g^{a,b}}$ denotes geodesic distance with respect to $g^{a,b}$. In general, the reparameterization realizing this infimum may fail to be smooth (see \cite[Section 4.2]{younes2008metric}), whence the geodesic is actually realized in the larger space of $L^2$ curves. Precise characterizations of the regularity of solutions to the optimization problem for SRVF parameters $g^{1,1/2}$ have been the subject of several recent articles, and it has been shown that for $C^1$ input curves, the optimal reparameterization $\gamma$ is achieved and is differentiable almost everywhere \cite{bruveris2016optimal}. For applications, the realistic setting considers PL curves and it is known that in that case, the optimal reparameterization is realized and is also PL \cite{lahiri2015precise}. We adapt the methods of \cite{lahiri2015precise} to the PL setting for general elastic metrics $g^{a,b}$ in Section \ref{sec:PL_shapes}.

A major benefit of our results is that under the $F_{a,b}$-transform, the optimal reparameterization problem \eqref{eqn:optimal_reparameterization} becomes equivalent to the optimization  problem which appears under the SRVF formalism. We are therefore able to utilize existing, highly efficient numerical approaches to approximate solutions of \eqref{eqn:optimal_reparameterization} (for example, the dynamic programming approach of \cite{mio2007shape}). Once an approximate solution $\gamma$ is obtained, we are able to easily compute geodesics in the shape space using the techniques of Section \ref{sec:geodesics_open_curves} applied to $c_1$ and the reparameterized curve $\widetilde{c}_2$.

The approach described above can be adapted to provide geodesics in the space of curves of fixed length (using the Hilbert sphere geodesics of Section \ref{sec:geodesics_open_curves}, together with the simple relationship between Euclidean and spherical distance) and geodesics in the space of closed curves (by numerically optimizing over $\mathrm{Diff}^+(I) \times S^1$, where the $S^1$ factor corresponds to a search for optimal seed points between the two curves). 
\section{Extending to Piecewise Linear Shapes}\label{sec:PL_shapes}

\subsection{Problem Setup}

A natural question of both theoretical and practical interest is whether the previous results can be extended to spaces of curves of lower regularity. Indeed, the methods above can be used, essentially without modification, to show that the map $F_{a,b}$ can be extended to give an isometry
$$
\left(\mathrm{Imm}^1(I,\C)/\{\mathrm{Tra,Rot}\},g^{a,b}\right) \rightarrow \left(C^0(I,\C^\ast),g^{L^2}\right),
$$
where $\mathrm{Imm}^1$ denotes the space of $C^1$ immersions and $g^{a,b}$ is the appropriately extended elastic metric.

It is common in the literature on the SRVF transform to consider the space $\mathrm{AC}(I,\R^N)$ of absolutely continuous curves in Euclidean space \cite{bruveris2016optimal,lahiri2015precise,srivastava2011shape}; absolutely continuous curves can be characterized as those curves which are continuous everywhere and differentiable almost everywhere \cite{royden1968real}. Recall that the transform $F_{1,1/2}$ recovers the SRVF transform for smooth, immersed plane curves. This map extends to a well-defined map
\begin{align*}
F_{1,1/2}:\mathrm{AC}(I,\C) &\rightarrow L^2(I,\C) \\
c(t) & \mapsto \left\{\begin{array}{cc}
\frac{c'(t)}{\sqrt{|c'(t)|}} & c'(t) \mbox{ exists and is nonzero} \\
0 & \mbox{otherwise}, \end{array}\right.
\end{align*}
which is a homeomorphism that pulls back $g^{L^2}$ to $g^{1,1/2}$ at smooth points.

One would like to similarly extend the remaining $F_{a,b}$ transforms to spaces of curves of low regularity, but this causes immediate issues. For $a/2b \neq 1$, one of $F_{a,b}$ or its inverse is multivalued pointwise, since the map involves complex exponentiation. In previous sections, we relied on the smoothness of our curves (or at least continuity of derivatives) to choose complex roots coherently in order to ensure $F_{a,b}$ was well-defined up to rotations.

\subsection{Extended $F_{a,b}$-Transform}

Inspired by our results for smooth curves, we can extend our work to one of the most important spaces of curves from a practical standpoint: piecewise linear curves. Let $\mathrm{PL}(I,\C)$ denote the space of piecewise linear planar curves; that is, each $c \in \mathrm{PL}(I,\C)$ is a continous curve such that there is a decomposition
$$
I=I_1 \cup I_2 \cup \cdots \cup I_k = [t_0=0,t_1] \cup [t_1,t_2] \cup \cdots \cup [t_{k-1},t_k=1]
$$
with $c'(t) = v_j \in \C$ for all $t \in (t_{j-1},t_j)$, $j=1,\ldots,k$. We call the points $c(t_j)$ \emph{vertices} of $c$ and the $t_j$ are called \emph{vertex parameters}. A PL curve $c$ is called a \emph{piecewise linear immersion} if $|c'(t)| \neq 0$ where $c'(t)$ is defined. Let $\mathrm{PL}(I,\C)/\mathrm{Tra}$ denote the space of piecewise linear curves modulo translations, which we identify with the set of curves based at the origin.

We define the \emph{extended $F_{a,b}$-transform} to be the map
$$
F_{a,b}(c)=2b r^{1/2} \exp \left(i\frac{a \theta}{2b}\right),
$$
where $r$ is the piecewise constant function
\begin{equation}\label{eqn:radius_function}
r(t)=\left\{\begin{array}{cl}
 |v_1| & \mbox{ for } t \in [0,t_1] \\
 |v_j|  & \mbox{ for } t \in (t_{j-1},t_j], \, j=2,\ldots,k, \end{array}\right.
\end{equation}
and $\theta$ is the piecewise constant function defined recursively by
\begin{equation}\label{eqn:recursive_definition}
\theta(t) = \left\{\begin{array}{cl}
\theta_1 = \arctan \frac{\mathrm{Im}(v_1)}{\mathrm{Re}(v_1)} & \mbox{ for } t \in [0,t_1], \\
\theta_{j-1} + s_j \cdot \delta \theta_j & \mbox{ for } t \in (t_{j-1},t_j],\,  j=2,\ldots,k. \end{array}\right.
\end{equation}
The term $\delta\theta_j \in [0,\pi]$ is the \emph{$j$th exterior angle} between the $(j-1)$th and $j$th edges of $c$ and is given by
\begin{equation}\label{eqn:delta_theta}
\delta \theta_j = \arccos \, \mathrm{Re}\left(\frac{v_{j-1} \overline{v_j}}{|v_{j-1}\overline{v_j}|}\right).
\end{equation}
To simplify notation later on, we set $\delta \theta_1 = \theta_1$. The coefficient $s_j\in \{-1,1\}$ describes the \emph{orientation} of the $j$th exterior angle and is given by
\begin{equation}\label{eqn:angle_sign}
s_j=\mathrm{sign}\left(\mathrm{Im}\left(v_{j-1} \overline{v_j}\right)\right),
\end{equation}
where we define the sign function for any real number $a$ according to the convention
$$
\mathrm{sign}(a) := \left\{\begin{array}{cc}
1 & \mbox{ if } a > 0 \\
-1 & \mbox{ if } a \leq 0.\end{array}\right.
$$
We set $s_1=1$.

\begin{lem}\label{lem:convergence_of_angle_function}
Let $c$ be a smooth immersion with a fixed representation of $c'$ in polar coordinates, $c'=re^{i\theta}$, so that $\theta(0)=\arctan \frac{\mathrm{Im}(c'(0))}{\mathrm{Re}(c'(0))}$ and $\theta$ is continuous. Let $\{c^n\}$ be a sequence of PL immersions with vertices  sampled from $c$ (i.e., each $c^n$ is a secant approximation of $c$). Assume that the sequence $\{c^n\}$ converges uniformly to $c$ in $C^1$ and let $(r^n,\theta^n)$ be polar coordinates for $c^n$, obtained by the formulas \eqref{eqn:radius_function} and \eqref{eqn:recursive_definition}. Then $r^n \rightarrow r$ and $\theta^n \rightarrow \theta$ uniformly.
\end{lem}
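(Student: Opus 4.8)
The plan is to route everything through a single scalar: the uniform angular discrepancy
$$
\rho_n := \sup_t\, \mathrm{dist}_{S^1}\!\bigl(\arg (c^n)'(t),\, \arg c'(t)\bigr),
$$
where $\mathrm{dist}_{S^1}$ denotes distance on the circle and the supremum runs over the cofinite set of $t$ where $(c^n)'$ is defined. Since $c$ is a smooth immersion on the compact interval $I$, we have $m:=\min_I|c'|>0$, and $C^1$ convergence gives $\epsilon_n:=\|(c^n)'-c'\|_\infty\to 0$. For $n$ large, $|(c^n)'|\geq m/2$, so comparing two vectors of magnitude $\geq m/2$ differing by at most $\epsilon_n$ yields $\rho_n\leq C\epsilon_n\to 0$ for a constant $C$ depending only on $m$. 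The radius statement is then immediate: on each edge $r^n=|(c^n)'|$, hence $\|r^n-r\|_\infty\leq\epsilon_n\to 0$.

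The core is the angle. The key observation is that the $\theta^n$ produced by the recursion \eqref{eqn:recursive_definition} is a \emph{lift} of $\arg(c^n)'$: we have $\theta^n_1=\arg v_1^n$, and at each vertex the increment $s_j^n\,\delta\theta_j^n$ from \eqref{eqn:delta_theta}--\eqref{eqn:angle_sign} is, by construction, the representative in $(-\pi,\pi)$ of the signed turning angle from $v_{j-1}^n$ to $v_j^n$. The per-vertex estimate below guarantees that consecutive edges differ in direction by strictly less than $\pi$, so $\arccos$ together with the orientation sign recovers this small signed change faithfully (without a spurious $2\pi$-complement). Thus on edge $j$, $\theta^n(t)$ is a continuous lift of $\arg v_j^n=\arg (c^n)'(t)$.

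Consequently, at every $t$ the values $\theta^n(t)$ and $\theta(t)$ are arguments of vectors within $\epsilon_n$ of one another, so $\theta^n(t)-\theta(t)=2\pi N(t)+e(t)$ with $N(t)\in\Z$ and $|e(t)|\leq\rho_n$. I would finish by proving $N\equiv 0$ for $n$ large, which gives $\|\theta^n-\theta\|_\infty\leq\rho_n\to 0$. Three facts, all consequences of $\rho_n\to 0$, drive this: (i) on each edge $j$ the direction $\arg c'(t)$ stays within $\rho_n$ of the constant $\arg v_j^n$, so $\theta$ oscillates by at most $2\rho_n$ there and $N$ is forced constant on the edge (once $\rho_n<\pi/2$); (ii) across a vertex $t_{j-1}^n$, $\theta$ is continuous while $\theta^n$ jumps by $\delta\theta_j^n\leq 2\rho_n$, and subtracting the two defining inequalities forces the integer to be unchanged; (iii) at $t=0$, $v_1^n\to c'(0)$ gives $\theta^n_1=\arctan\frac{\mathrm{Im}(v_1^n)}{\mathrm{Re}(v_1^n)}\to\theta(0)$, so $N=0$ on the first edge. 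An induction over edges propagates $N=0$ across the whole partition. I emphasize that this never requires the mesh to tend to zero: $C^1$ convergence already forces each edge to lie over a nearly straight portion of $c$, which is precisely what keeps both the per-edge oscillation and the per-vertex jump of order $\rho_n$.

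The main obstacle, I expect, is exactly the faithfulness claim used in (ii), namely ruling out spurious sign- or $2\pi$-jumps in the discretely defined angle. Everything hinges on establishing, \emph{uniformly in $j$}, that consecutive edge directions differ by well below $\pi$, so that the $\arccos$-plus-sign prescription returns the genuine small increment; this uniformity is what $\rho_n\to 0$ supplies, and it must be secured before the integer bookkeeping for $N$ can be closed.
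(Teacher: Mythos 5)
Your proof is correct and follows essentially the same route as the paper's: both arguments reduce to showing that the recursively defined $\theta^n$ is the particular lift of $\arg (c^n)'$ that stays uniformly close to $\theta$, and both rule out a spurious $2\pi$-shift by propagating from the initial condition across edges and vertices, using that the discrete increments and the per-edge variation of $\theta$ are uniformly small (the paper phrases this as $\theta^n_j=\widetilde{\theta}^n_j$ via a triangle-inequality contradiction, you as $N(t)\equiv 0$ via induction). Your explicit observation that $C^1$ convergence forces $c'$ to be nearly constant on each edge, so that no mesh-size hypothesis is needed, is a slightly more careful justification of the step the paper disposes of by appealing to continuity of $\theta$.
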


\begin{proof}
The assumption of uniform convergence $(c^n)'(t) \rightarrow c'(t)$ immediately implies $r^n(t) \rightarrow r(t)$ uniformly, and one only needs to check convergence of the angle functions. Let $v_j^n$ denote the derivative vectors of $c^n$ and let $\widehat{v}_j^n$ denote the derivative vectors of $c$ sampled at vertex parameters of $c^n$. Moreover, let $\widehat{\theta}^n_j$ denote samples of the function $\theta$ at the vertex parameters of $c^n$. Then, our assumptions imply that $|v_j^n - \widehat{v}_j^n| \rightarrow 0$ uniformly in $j$ and it follows that there is some collection of angles $\{\widetilde{\theta}_j^n\}$ such that $v_j^n = r_j^n \exp (i \widetilde{\theta}_j^n)$ for all $j,n$ and such that $|\widetilde{\theta}_j^n - \widehat{\theta}_j^n|$ can be uniformly bounded in $j$ for sufficiently large $n$. We wish to show that $\widetilde{\theta}_j^n = \theta_j^n$ for sufficiently large $n$. By the definition of $\theta^n_j$, the contrary would imply that for some $j$ we have
$$
\pi < \left| \widetilde{\theta}_j^n - \widetilde{\theta}_{j-1}^n \right| \leq \left| \widetilde{\theta}_j^n  - \widehat{\theta}_j^n \right| + \left|\widetilde{\theta}_{j-1}^n - \widehat{\theta}_{j-1}^n\right| + \left|\widehat{\theta}_j^n - \widehat{\theta}_{j-1}^n\right|.
$$
Taking $n$ sufficiently large, the first two terms on the right side can uniformly be made arbitrarily small, by the definition of $\widetilde{\theta}_j^n$. The last term can be made arbitrarily small because the $\widehat{\theta}_j^n$ are sampled from the continuous function $\theta$, and we have arrived at a contradiction.
\end{proof}

This immediately implies the following convergence result.

\begin{prop}
Let $c$ be a smooth immersed plane curve and let $\{c_n\}$ be a sequence of piecewise linear immersions as in Lemma \ref{lem:convergence_of_angle_function}. Then, $F_{a,b}(c_n) \rightarrow F_{a,b}(c)$ in $L^2$.
\end{prop}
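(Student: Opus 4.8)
The plan is to obtain the result as a soft consequence of Lemma~\ref{lem:convergence_of_angle_function}: the genuine analytic content—coherent control of the branch of the angle function for secant approximations—has already been established there, and what remains is merely to transfer the uniform convergences $r^n \to r$ and $\theta^n \to \theta$ through the defining formula for $F_{a,b}$. Concretely, $F_{a,b}$ factors through the continuous map $\Phi(r,\theta) = 2b\,r^{1/2}\exp(ia\theta/(2b))$, so I would first argue uniform convergence $\Phi(r^n,\theta^n) \to \Phi(r,\theta)$ on $I$ and then upgrade uniform convergence to $L^2$ convergence using the finiteness of the Lebesgue measure of $I = [0,1]$.

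First I would record the relevant bounds. Since $c$ is a smooth immersion on the compact interval $I$, the function $r = |c'|$ is continuous and nowhere zero, hence $0 < m \le r(t) \le M$ for suitable constants; likewise $\theta$ is continuous, so $|\theta(t)| \le L$. By Lemma~\ref{lem:convergence_of_angle_function} the convergences $r^n \to r$ and $\theta^n \to \theta$ are uniform, so for all large $n$ one has $m/2 \le r^n(t) \le M+1$ uniformly in $t$. Next I would split the pointwise difference by adding and subtracting a mixed term,
\begin{align*}
\bigl|F_{a,b}(c_n) - F_{a,b}(c)\bigr| \le 2b\,\bigl|(r^n)^{1/2} - r^{1/2}\bigr| + 2b\,r^{1/2}\,\bigl|\exp(ia\theta^n/(2b)) - \exp(ia\theta/(2b))\bigr|.
\end{align*}
The square root is Lipschitz on $[m/2, M+1]$, so the first term is controlled by $\|r^n - r\|_\infty$; the elementary bound $|\exp(i\alpha) - \exp(i\beta)| \le |\alpha - \beta|$ controls the second by $a\sqrt{M}\,\|\theta^n - \theta\|_\infty$. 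Both bounds are uniform in $t$ and tend to zero, yielding uniform convergence $F_{a,b}(c_n) \to F_{a,b}(c)$ on $I$.

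Finally, I would conclude with the standard estimate $\|F_{a,b}(c_n) - F_{a,b}(c)\|_{L^2}^2 \le |I|\cdot \|F_{a,b}(c_n) - F_{a,b}(c)\|_\infty^2 \to 0$. There is no substantial obstacle here; the only point meriting care is the behavior of $r^{1/2}$ where $r$ is small, but this is a non-issue because the immersion hypothesis forces $r$ (and, for large $n$, $r^n$) to be bounded away from zero on the compact domain, so that one may replace the merely uniformly continuous square root by a Lipschitz one on the compact range of values. The entire weight of the argument therefore rests on the uniform convergence of the angle functions already secured by Lemma~\ref{lem:convergence_of_angle_function}.
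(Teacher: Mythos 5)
Your proof is correct and follows exactly the route the paper intends: the paper gives no written proof, declaring the proposition an immediate consequence of Lemma \ref{lem:convergence_of_angle_function}, and your argument (uniform bounds on $r$ away from zero, the Lipschitz/chord estimates for $r^{1/2}$ and $\exp(i\,\cdot)$, and the passage from uniform to $L^2$ convergence on a finite-measure interval) is precisely the routine verification that justifies that claim.
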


\begin{remark}
This shows that if a PL curve approximates a smooth curve, then taking sufficiently many samples produces a transformed curve, which is close to the transformed smooth curve. On the other hand, if the PL curve is truly meant to contain jagged angles, then the discrete $F_{a,b}$ is still well-defined but may not be a faithful representation of the curve in transform space. In this scenario, ad hoc methods are necessary to extend the transformation.
\end{remark}
\subsection{Injectivity}

Theorem \ref{thm:isometry} says that $F_{a,b}$ induces a bijection
$$
\mathrm{Imm}(I,\C)/\{\mathrm{Tra},\mathrm{Rot}\} \leftrightarrow C^\infty(I,\C^\ast)/\mathrm{Rot}.
$$
Unfortunately, the same property is not enjoyed by the extended $F_{a,b}$-transform, as shown by the following example.

\begin{example}
Let $c_1$ and $c_2$ be the PL curves given by $c_1'(t)=1$ and
$$
c_2'(t) = \left\{\begin{array}{cc}
1 & t \in [0,1/2], \\
e^{i\theta} & t \in [1/2,1],\end{array}\right.
$$
for some fixed choice of $\theta \in (0,\pi]$. Then $F_{a,b}(c_1)(t)=2b$ and
$$
F_{a,b}(c_2)(t) = \left\{\begin{array}{cc}
2b & t\in [1/2,1] \\
2b\exp\left(i \theta \frac{a}{2b} \right) & t \in [1/2,1]. \end{array}\right.
$$
Taking parameters $a$ and $b$ satisfying $\frac{a}{2b}=\frac{2\pi}{\theta}$ yields $F_{a,b}(c_1)= F_{a,b}(c_2)$, while it is clear that $c_1$ and $c_2$ do not differ by a rigid rotation.
\end{example}

This potentially causes a major problem when computing distances between PL curves: two PL curves which do not differ by a rotation can receive zero geodesic distance in $F_{a,b}$-transform space. Luckily, the next proposition shows that this situation is highly non-generic and that it does not arise in most applications.

\begin{prop}\label{prop:injectivity}
Let $c_1$ and $c_2$ be PL immersions with $k$ segments whose derivatives are represented in polar coordinates by the functions $(r^1,\theta^1)$ and $(r^2,\theta^2)$, respectively, as defined by formulas \eqref{eqn:radius_function} and \eqref{eqn:recursive_definition}.  The images of the curves under the extended $F_{a,b}$-transform are the same if and only if $r^1(t)=r^2(t)$ for all $t$ and there exist integers $\ell_j$ such that
$$
s_j^1 \delta \theta^1_j = s_j^2 \delta \theta^2_j + \frac{4b}{a} \ell_j \pi
$$
for all $j=1,\ldots,k$.
\end{prop}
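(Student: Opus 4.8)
The plan is to exploit the fact that the extended transform of a PL immersion is itself piecewise constant, so that comparing $F_{a,b}(c_1)$ and $F_{a,b}(c_2)$ reduces to a finite list of equalities of complex numbers, one per segment. First I would arrange that $c_1$ and $c_2$ are defined relative to a common partition $0 = t_0 < t_1 < \cdots < t_k = 1$; this is implicit in the indexing of the statement (the increments $s^1_j\delta\theta^1_j$ and $s^2_j\delta\theta^2_j$ are compared at the same $j$), and the general case of compatible PL curves reduces to it by passing to a common refinement. Refining is harmless, since inserting a vertex in the interior of a linear piece produces exterior angle $\delta\theta = 0$ and leaves both the radius function \eqref{eqn:radius_function} and the accumulated angle \eqref{eqn:recursive_definition}, hence $F_{a,b}$ itself, unchanged. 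On the open segment $(t_{j-1},t_j)$ the transform takes the constant value $2b\,(r^i_j)^{1/2}\exp\left(i\frac{a}{2b}\theta^i_j\right)$, where $r^i_j = |v^i_j|$ and $\theta^i_j$ is the accumulated angle of $c_i$; thus $F_{a,b}(c_1) = F_{a,b}(c_2)$ if and only if these two nonzero complex numbers agree for every $j$.

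Next I would split each segment-wise equality into modulus and argument. Equality of moduli gives $2b\,(r^1_j)^{1/2} = 2b\,(r^2_j)^{1/2}$, i.e.\ $r^1_j = r^2_j$, which on the common partition is precisely the pointwise statement $r^1(t) = r^2(t)$ for all $t$. Equality of arguments modulo $2\pi$, using that the immersion hypothesis makes both values nonzero, gives $\frac{a}{2b}\theta^1_j \equiv \frac{a}{2b}\theta^2_j \pmod{2\pi}$, equivalently $\theta^1_j - \theta^2_j \in \frac{4b}{a}\pi\,\Z$ for every $j$. This holds uniformly whether or not $a/2b$ is rational, since $\frac{4b}{a}\pi\,\Z$ is a genuine subgroup of $(\R,+)$ in either case.

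Finally I would convert the condition on the accumulated angles into the claimed condition on the increments. Writing $\Delta_j := \theta^1_j - \theta^2_j$ and using the recursion $\theta^i_j = \theta^i_{j-1} + s^i_j\,\delta\theta^i_j$ (with the convention $\theta^i_0 = 0$, so that $\Delta_0 = 0$), the consecutive difference is
\[
\Delta_j - \Delta_{j-1} = s^1_j\,\delta\theta^1_j - s^2_j\,\delta\theta^2_j .
\]
Because $\frac{4b}{a}\pi\,\Z$ is closed under sums and differences, the statement ``$\Delta_j \in \frac{4b}{a}\pi\,\Z$ for all $j$'' is equivalent to ``$\Delta_j - \Delta_{j-1} \in \frac{4b}{a}\pi\,\Z$ for all $j$'': the forward direction is immediate, and the reverse follows by telescoping $\Delta_j = \sum_{i=1}^{j}(\Delta_i - \Delta_{i-1})$, setting $\ell_j = m_j - m_{j-1}$ where $\Delta_j = \frac{4b}{a}\pi m_j$. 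This is exactly the existence of integers $\ell_j$ with $s^1_j\delta\theta^1_j = s^2_j\delta\theta^2_j + \frac{4b}{a}\ell_j\pi$, establishing both directions.

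The computation is routine; the substance lies entirely in the bookkeeping. The main obstacle is justifying the reduction to a common partition and verifying the accumulated-versus-incremental equivalence in \emph{both} directions, rather than only the easy forward implication that integral increments yield integral partial sums. One should also check that the degenerate cases of the sign convention --- where $\delta\theta_j \in \{0,\pi\}$ and $\mathrm{Im}(v_{j-1}\overline{v_j}) = 0$ force $s_j = -1$ by fiat --- do not disturb the argument, since these enter $\theta^i_j$ only through the well-defined product $s^i_j\,\delta\theta^i_j$.
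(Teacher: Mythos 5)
Your proof is correct and follows essentially the same route as the paper's: compare the piecewise-constant transform values segment by segment, read off $r^1=r^2$ from the moduli and $\theta^1_j-\theta^2_j\in\frac{4b}{a}\pi\Z$ from the arguments, then pass from the accumulated angles to the increments (the paper does this by induction on $j$, you by telescoping, which is the same bookkeeping). Your added care about the common partition and about verifying both directions is sensible but does not change the substance of the argument.
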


\begin{proof}
Assume that $F_{a,b}(c_1)=F_{a,b}(c_2)$. Then $|F_{a,b}(c_1)(t)|=|F_{a,b}(c_2)(t)|$ holds for all $t$ and it follows that $r^1(t)=r^2(t)$ for all $t$. Since $r^j(t)\neq 0$, we have that $\exp\left(\frac{a \theta^1}{2b}\right) = \exp \left(\frac{a \theta^2}{2b}\right)$. This equality holds if and only if
$$
\frac{a \theta^1_j}{2b} = \frac{a \theta^2_j}{2b} + 2 \ell_j \pi
$$
for all $j=1,\ldots,k$ for some integers $\ell_j$. Therefore
\begin{equation}\label{eqn:injectivity_condition}
\theta^1_j = \theta^2_j + \frac{4b}{a} \ell_j \pi.
\end{equation}
Setting $j=1$ and recalling that we are using the convention $\delta \theta_1 = \theta_1$ and $s_1=1$ provides our first condition on the $\theta$-functions. The $j=2$ instance of \eqref{eqn:injectivity_condition} reads
\begin{align*}
\theta^1_2 = \theta^2_2 + \frac{4b}{a}\ell_2 \pi &\Leftrightarrow \theta^1_1 + s^1_2 \delta \theta_2^1 = \theta^2_1 + s^2_2 \delta \theta_2^2  + \frac{4b}{a}\ell_2 \pi  \\
&\Leftrightarrow s^1_2 \delta \theta_2^1 =  s^2_2 \delta \theta_2^2 + \frac{4b}{a}(\ell_2 - \ell_1) \pi.
\end{align*}
The claim then follows in the $j=2$ case after relabelling the integer coefficient of $\frac{4b}{a}\pi$. The general claim follows similarly by induction.
\end{proof}

It follows that $F_{a,b}$ is \emph{generically injective} on $\mathrm{PL}(I,\C)/\{\mathrm{Tra},\mathrm{Rot}\}$ in the sense that if $c_1$ and $c_2$ are PL curves with $F_{a,b}(c_1) = F_{a,b}(c_2)$, then there exists an arbitrarily small perturbation $\widetilde{c}_2$ of $c_2$ such that the $\mathrm{SO}(2)$-orbit of $F_{a,b}\left(\widetilde{c}_2\right)$ is different than that of $F_{a,b}(c_1)$.  We also have the following corollary, which provides injectivity of $F_{a,b}$ on PL curves with bounded exterior angles.

\begin{cor}
The extended map $F_{a,b}$ is injective when restricted to the set of PL immersions with exterior angles uniformly bounded by $\frac{2b}{a}\pi$.
\end{cor}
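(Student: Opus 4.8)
The plan is to read the corollary off directly from the characterization in Proposition \ref{prop:injectivity}, the only real content being that the exterior-angle bound forces every integer $\ell_j$ appearing there to vanish. So suppose $c_1$ and $c_2$ are PL immersions all of whose exterior angles are bounded by $\frac{2b}{a}\pi$, and assume $F_{a,b}(c_1)=F_{a,b}(c_2)$. First I would arrange that the two curves are described over a common partition: since $|F_{a,b}(c_i)|=2b\sqrt{r^i}$ is piecewise constant and the two images agree, I may pass to a common refinement of the two vertex sets. Subdividing an edge only inserts a vertex along a straight segment, which contributes exterior angle $0$, so the hypothesis is preserved and both curves acquire the same number $k$ of segments. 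Proposition \ref{prop:injectivity} then applies and yields $r^1\equiv r^2$ together with integers $\ell_1,\dots,\ell_k$ satisfying $s_j^1\delta\theta_j^1 = s_j^2\delta\theta_j^2 + \frac{4b}{a}\ell_j\pi$.

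The key step is to show $\ell_j=0$ for every $j$. For $j\geq 2$ we have $s_j^i\in\{-1,1\}$ and $0\leq\delta\theta_j^i<\frac{2b}{a}\pi$, so $|s_j^i\delta\theta_j^i|<\frac{2b}{a}\pi$, whence
\[
\frac{4b}{a}\,|\ell_j|\,\pi \;=\; \bigl|\,s_j^1\delta\theta_j^1 - s_j^2\delta\theta_j^2\,\bigr| \;<\; \frac{4b}{a}\pi,
\]
forcing $|\ell_j|<1$, i.e.\ $\ell_j=0$. Hence $s_j^1\delta\theta_j^1 = s_j^2\delta\theta_j^2$ for all $j\geq 2$, and the $j=1$ case (below) gives the same. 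I would then run the recursion \eqref{eqn:recursive_definition}: the base case is $\theta_1^1=\theta_1^2$, and if $\theta_{j-1}^1=\theta_{j-1}^2$ then $\theta_j^i=\theta_{j-1}^i+s_j^i\delta\theta_j^i$ combined with $s_j^1\delta\theta_j^1 = s_j^2\delta\theta_j^2$ gives $\theta_j^1=\theta_j^2$. Together with $r^1\equiv r^2$ this yields $c_1'=c_2'$ almost everywhere, and since both curves are based at the origin, $c_1=c_2$.

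I expect the two boundary subtleties to be where the real care is needed. First, strictness of the bound is essential: if exterior angles are only required to satisfy $\delta\theta_j\leq\frac{2b}{a}\pi$, then a pair of edges with $\delta\theta_j^1=\delta\theta_j^2=\frac{2b}{a}\pi$ but opposite orientations $s_j^1=-s_j^2$ produces $s_j^1\delta\theta_j^1-s_j^2\delta\theta_j^2=\frac{4b}{a}\pi$, i.e.\ $\ell_j=1$ and a genuine failure of injectivity. I would therefore read the hypothesis as a strict bound (or explicitly exclude the equality case). Second, the first segment is governed not by an exterior angle but by the convention $\delta\theta_1=\theta_1$, $s_1=1$ with $\theta_1=\arctan\frac{\mathrm{Im}(v_1)}{\mathrm{Re}(v_1)}\in(-\tfrac{\pi}{2},\tfrac{\pi}{2})$. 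Here $|\theta_1^1-\theta_1^2|<\pi$ gives $\frac{4b}{a}|\ell_1|\pi<\pi$, so $|\ell_1|<\frac{a}{4b}$; this forces $\ell_1=0$ whenever $a\leq 4b$, which covers the interesting regime $2b<a\leq 4b$ in which the hypothesis is nonvacuous. The honest statement of the obstacle is thus that controlling the \emph{initial} direction requires the arctangent range rather than the exterior-angle hypothesis, and this is the step I would present most carefully.
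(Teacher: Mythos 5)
Your proof is correct and takes essentially the same route as the paper: invoke Proposition \ref{prop:injectivity} and use the angle bound to force every $\ell_j$ to vanish, then recover $\theta^1=\theta^2$ and $r^1=r^2$. Your extra care with the $j=1$ term is warranted rather than pedantic---the paper's own proof applies the estimate $\left|s_j^1\delta\theta_j^1 - s_j^2\delta\theta_j^2\right| \le \delta\theta_j^1+\delta\theta_j^2 < \frac{4b}{a}\pi$ uniformly in $j$, but for $j=1$ (where $\delta\theta_1=\theta_1\in(-\tfrac{\pi}{2},\tfrac{\pi}{2})$ is an initial direction, not an exterior angle) this only forces $\ell_1=0$ when $a\le 4b$, which is exactly the gap you isolate; for $a>4b$ one can exhibit single-segment curves with distinct initial angles and identical images, so the statement must there be read modulo rotations, consistent with the surrounding discussion of injectivity on $\mathrm{PL}(I,\C)/\{\mathrm{Tra},\mathrm{Rot}\}$.
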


\noindent In particular, note that $F_{a,b}$ is always injective on PL curves when $\frac{a}{2b} \geq 1$.

\begin{proof}
Let $c_1$ and $c_2$ be PL immersions such that $\delta \theta^1_j, \delta \theta^2_j < \frac{2b}{a}\pi$ for all $j$. If $F_{a,b}(c_1) = F_{a,b}(c_2)$, then Proposition \ref{prop:injectivity} implies that there exist integers $\ell_j$ such that
$$
\left| s_j^1 \delta \theta_j^1 - s_j^2 \delta \theta_j^2 \right| = \left| \frac{4b}{a}\ell_j \pi \right|
$$
for all $j$. We have
$$
\left| s_j^1 \delta \theta_j^1 - s_j^2 \delta \theta_j^2 \right| \leq \delta \theta_j^1 + \delta \theta_j^2 < \frac{4b}{a} \pi,
$$
so that each $\ell_j=0$. Therefore $\theta^1 = \theta^2$ and it follows that $c_1 = c_2$.
\end{proof}

For PL curves obtained by sampling a smooth curve, dense enough sampling ensures that exterior angles can be bounded by an arbitrarily small number. It follows that $F_{a,b}$ can be guaranteed to be injective. To quantify this, consider the simple case of a secant approximation of a smooth curve $c$ by an equilateral PL curve with edgelength $s$. At a vertex of the PL curve, let $\delta \theta$ denote the exterior angle, and let $\kappa$ denote the curvature of $c$ at that point. Then, we have the Taylor approximation (see \cite{anoshkina2002asymtotic})
$$
\delta \theta = s \kappa + O(s^3).
$$
Assuming that the PL curve has $N$ vertices, $c$ is normalized to have length $1$, and that $s$ is therefore roughly equal to $1/N$, it follows that there is an asymptotic bound
$$
|\delta \theta | \leq \frac{\kappa_{max}}{N} + O(1/N^3),
$$
where $\kappa_{max}$ is the maximum curvature of $c$. We therefore obtain the desired bound on the turning angle as soon as
$$
N > \frac{a}{2b \pi} \left(\kappa_{max} + O(1/N^2)\right).
$$

\subsection{Exact Matching}

The algorithm for computing geodesic distances in the shape space $\mathrm{Imm}(I,\C)/\mathrm{Diff}^+(I)$ outlined in Section \ref{sec:optimized_geodesics} calls for a solution of the optimization problem \eqref{eqn:optimal_reparameterization}. We now wish to demonstrate the existence of solutions to the corresponding problem in the PL setting. In order to achieve a solution, we replace the smooth diffeomorphism group $\mathrm{Diff}^+(I)$ with the semigroup
$$
\overline{\Gamma} = \{\gamma \in \mathrm{AC}(I,I) \mid \gamma(0)=0, \, \gamma(1)=1,\, \gamma'(t) \geq 0 \mbox{ when $\gamma'(t)$ exists }\}.
$$
The PL optimization problem seeks the optimal reparameterization $\gamma$ for PL curves $c_1$ and $c_2$ satisfying
\begin{equation}\label{eqn:PL_optimization}
\gamma = \mathrm{arginf} \{\mathrm{dist}_{g^{a,b}}(c_1,c_2(\gamma)) \mid \gamma \in \overline{\Gamma}\}.
\end{equation}
Because the $F_{a,b}$-transform induces the same optimization problem for general elastic metrics $g^{a,b}$ as the one studied in the SRVF setting $g^{1,1/2}$, we are able to directly appeal to recent work of Lahiri, Robinson and Klassen \cite{lahiri2015precise}.

\begin{prop}
The optimal reparameterization between two PL curves $[c_1]$ and $[c_2]$ with respect to $g^{a,b}$ is realized by a PL function in $\overline{\Gamma}$.
\end{prop}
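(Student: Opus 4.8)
The plan is to transport the optimization problem \eqref{eqn:PL_optimization} verbatim into transform space via $F_{a,b}$, where it becomes an instance of the problem already solved for the SRVF metric by Lahiri, Robinson and Klassen \cite{lahiri2015precise}. First I would record the two ingredients that make the translation exact. By the (appropriately extended) isometry of Theorem \ref{thm:pullback}, the geodesic distance $\mathrm{dist}_{g^{a,b}}(c_1,c_2\circ\gamma)$ coincides with the ambient $L^2$ distance $\|F_{a,b}(c_1)-F_{a,b}(c_2\circ\gamma)\|_{L^2}$, since the target is a flat vector space under $g^{L^2}$. By the equivariance of Proposition \ref{prop:diff_equivariant}, $F_{a,b}(c_2\circ\gamma)=\sqrt{|\gamma'|}\,(F_{a,b}(c_2)\circ\gamma)=\gamma\ast F_{a,b}(c_2)$. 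Combining these gives, for every $\gamma\in\overline{\Gamma}$,
$$
\mathrm{dist}_{g^{a,b}}(c_1,c_2\circ\gamma)=\bigl\|F_{a,b}(c_1)-\sqrt{|\gamma'|}\,(F_{a,b}(c_2)\circ\gamma)\bigr\|_{L^2}.
$$

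The crucial observation is that the reparameterization action $\gamma\ast q=\sqrt{|\gamma'|}\,(q\circ\gamma)$ on the right is \emph{independent} of the parameters $a,b$: it is precisely the half-density action under which SRVF functions transform. Moreover, by the defining formulas \eqref{eqn:radius_function} and \eqref{eqn:recursive_definition} for the extended transform, both $F_{a,b}(c_1)$ and $F_{a,b}(c_2)$ are piecewise constant $L^2$ functions, i.e. step functions, which is exactly the class of transformed curves produced by PL curves in the SRVF formalism. Hence the right-hand side above is, as a function of $\gamma$, literally the objective studied in \cite{lahiri2015precise} with input data $q_1=F_{a,b}(c_1)$ and $q_2=F_{a,b}(c_2)$. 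I would then invoke their main theorem, which asserts that for step-function inputs the infimum over $\overline{\Gamma}$ is attained by a PL element of $\overline{\Gamma}$. Since the two objective functions agree for all $\gamma\in\overline{\Gamma}$, the minimizer they produce is a minimizer of \eqref{eqn:PL_optimization}, and it is PL.

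The hard part will be the identification of the objective in reduced regularity: one must check that the pullback identity and the equivariance relation, both proved earlier for smooth curves, persist when $c_2\circ\gamma$ is only absolutely continuous, since $\gamma\in\overline{\Gamma}$ may have vanishing derivative and so $c_2\circ\gamma$ need not be a PL immersion. Here I would emphasize that the action in transform space involves no complex exponentiation: it merely precomposes and rescales the \emph{already-chosen} step function $F_{a,b}(c_2)$ by $\sqrt{|\gamma'|}$. Thus the coherent-branch obstructions that prevent a general low-regularity extension of $F_{a,b}$ never enter the computation of $\gamma\ast F_{a,b}(c_2)$, and the displayed identity extends from PL $\gamma$ to all of $\overline{\Gamma}$ by the same density argument used for the SRVF transform. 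Once the objective is matched in this way, the remainder is a direct citation and no genuinely new estimate is required.
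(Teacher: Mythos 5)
Your proposal is correct and follows essentially the same route as the paper: push both curves into transform space where they become piecewise constant ($L^2$ step) functions, observe that the reparameterization action becomes the parameter-independent half-density action $\gamma \ast q = \sqrt{\gamma'}\,(q\circ\gamma)$ so that the objective is literally the SRVF matching functional, and invoke \cite[Theorem 5]{lahiri2015precise}. The paper's own proof is just a terser version of this; your extra care about the low-regularity identification and the absence of branch ambiguities in transform space is a reasonable elaboration rather than a different argument.
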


\begin{proof}
The images of the PL curve $c_j$ under $F_{a,b}$ are piecewise constant maps $q_j$ into $\C\approx \R^2$. The reparameterization action of the semigroup $\overline{\Gamma}$ on $c_j$ transforms into an action on the image curves by the formula
$$
\gamma \ast q_j = \sqrt{\gamma'} \cdot q_j \circ \gamma,
$$
defined pointwise almost everywhere. The optimization problem \eqref{eqn:PL_optimization} becomes
$$
\inf_{\gamma \in \overline{\Gamma}} \|q_1 - \gamma \ast q_2 \|_{L^2}.
$$
It follows from \cite[Theorem 5]{lahiri2015precise} that this new optimization problem is solved by a PL element of $\overline{\Gamma}$.
\end{proof}

\begin{figure}[!t]
\begin{center}
\begin{tabular}{|c|c|c|c|}
\hline
$\frac{a}{2b}$=2.00&$\frac{a}{2b}$=1.00&$\frac{a}{2b}$=0.50&$\frac{a}{2b}$=0.17\\
\hline
\includegraphics[width=1in]{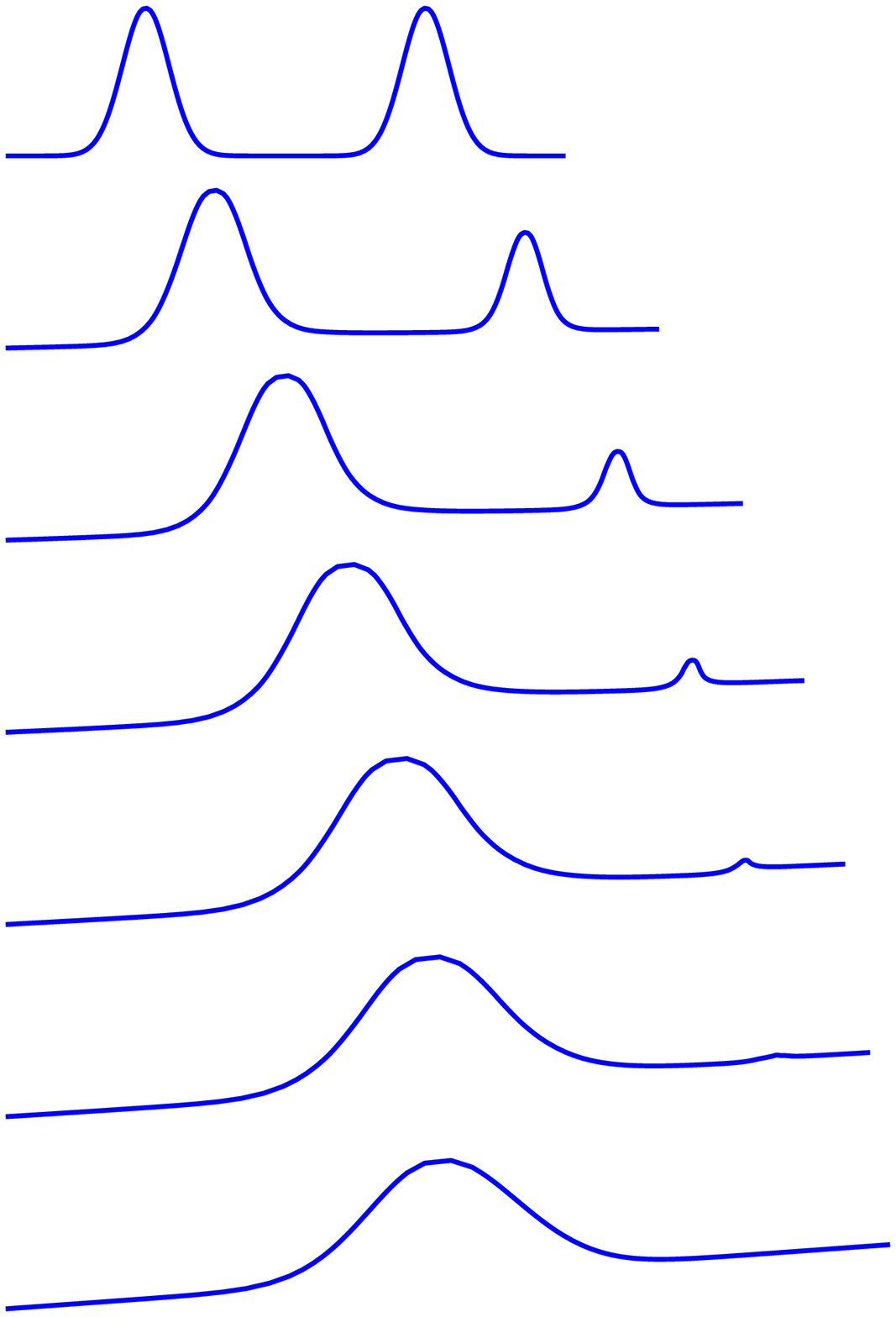}&\includegraphics[width=1in]{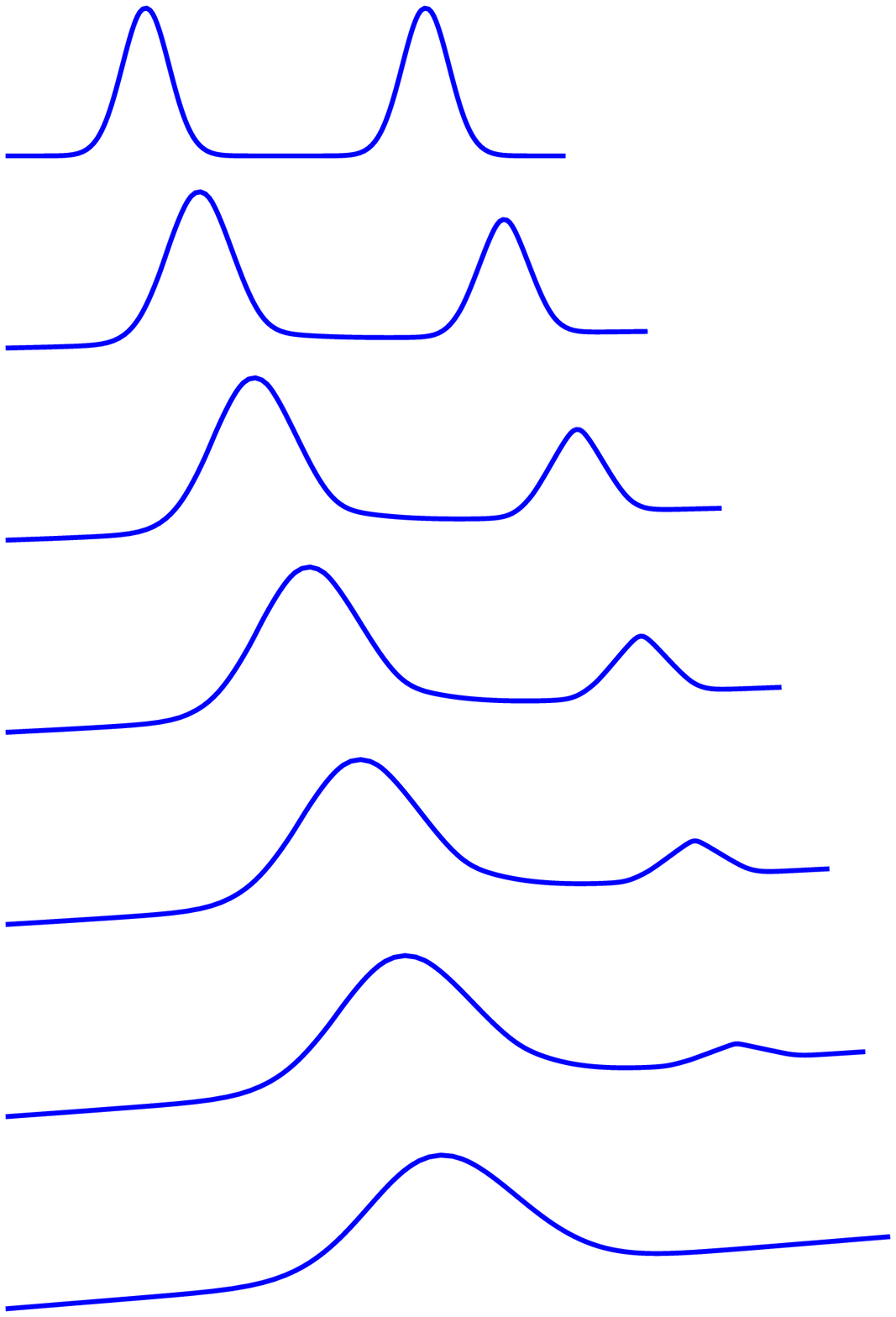}&\includegraphics[width=1in]{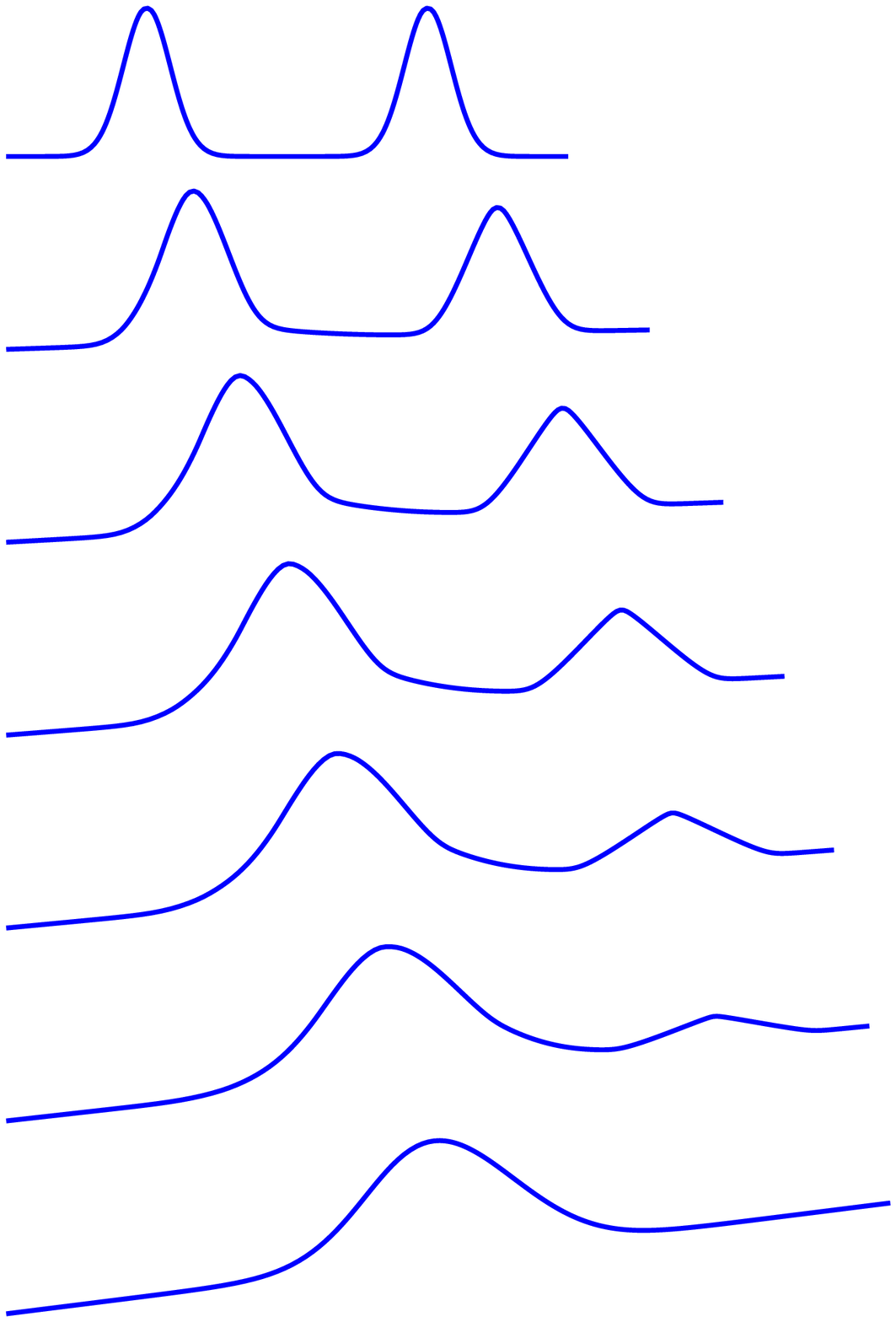}&\includegraphics[width=1in]{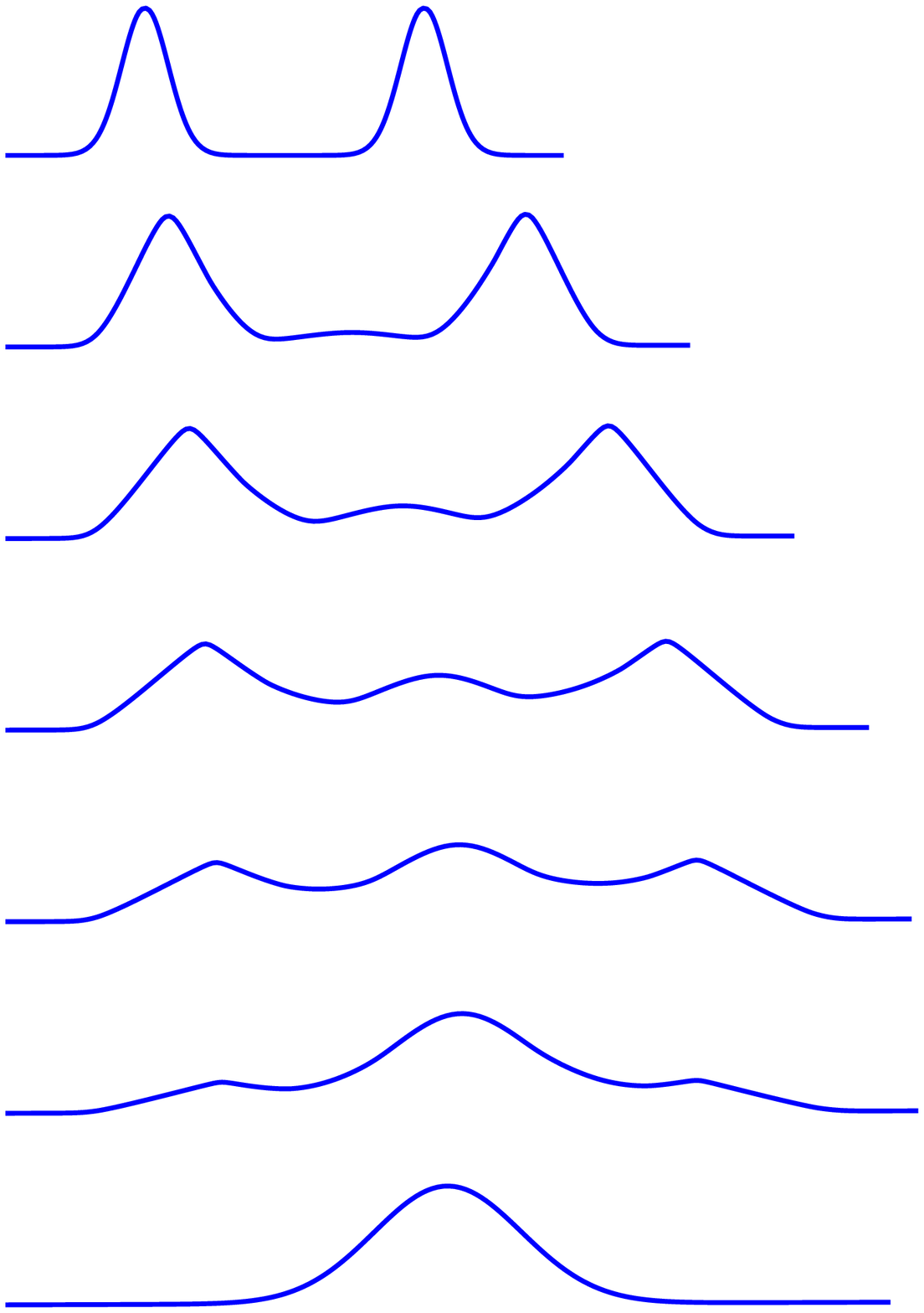}\\
\hline
\includegraphics[width=1in]{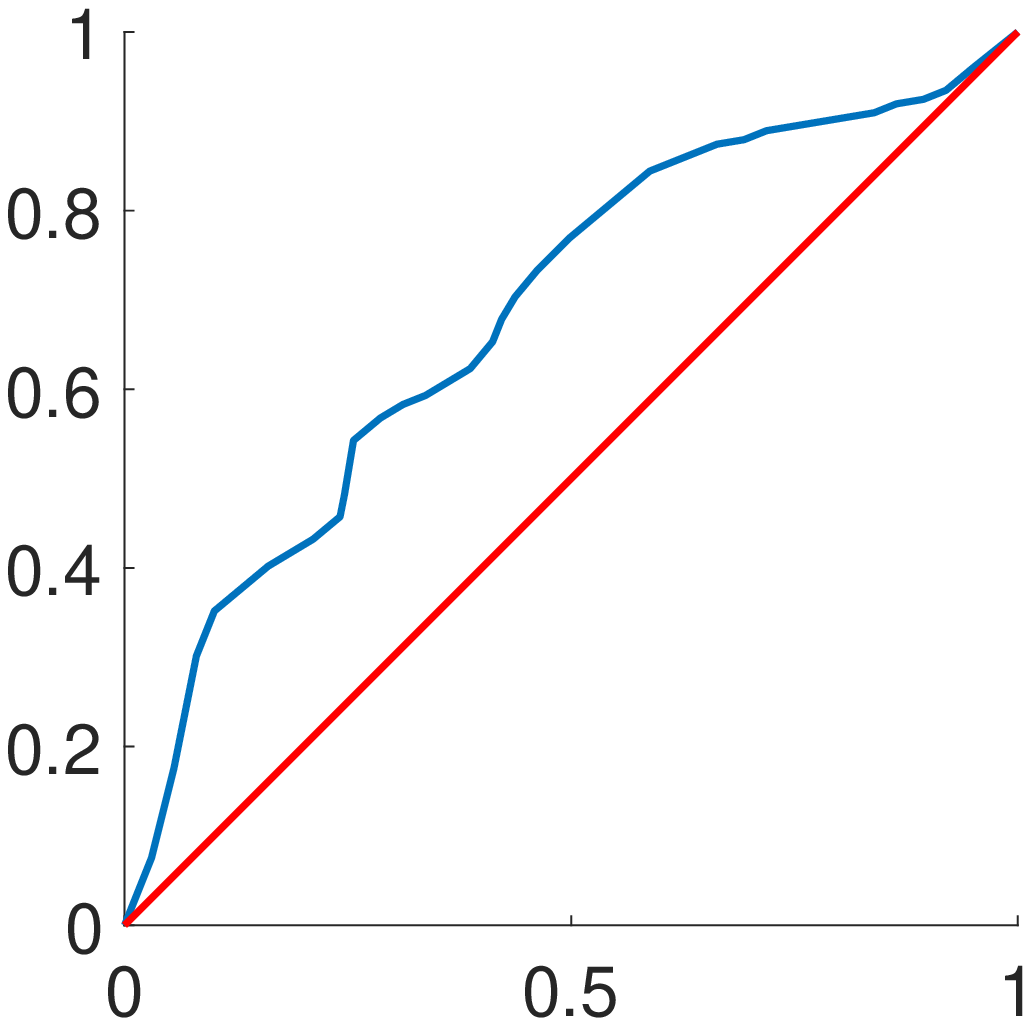}&\includegraphics[width=1in]{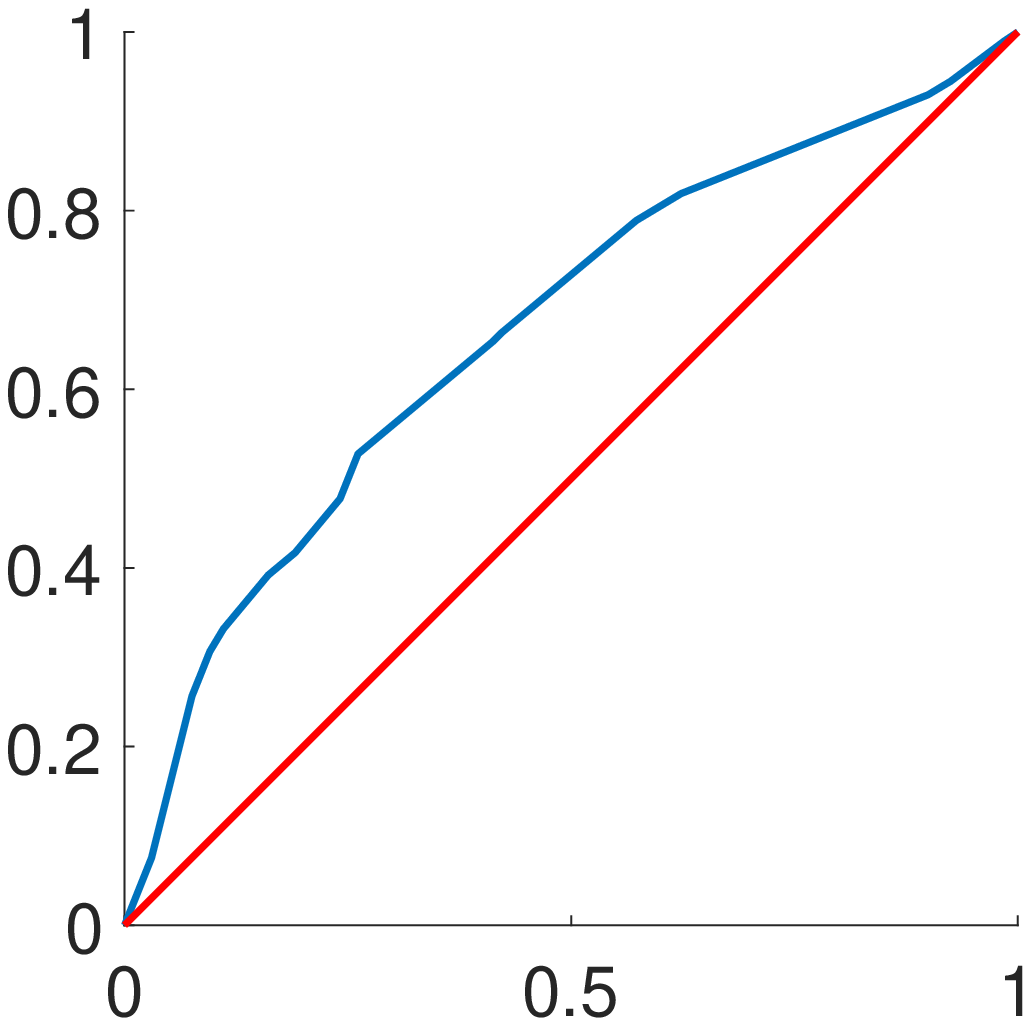}&\includegraphics[width=1in]{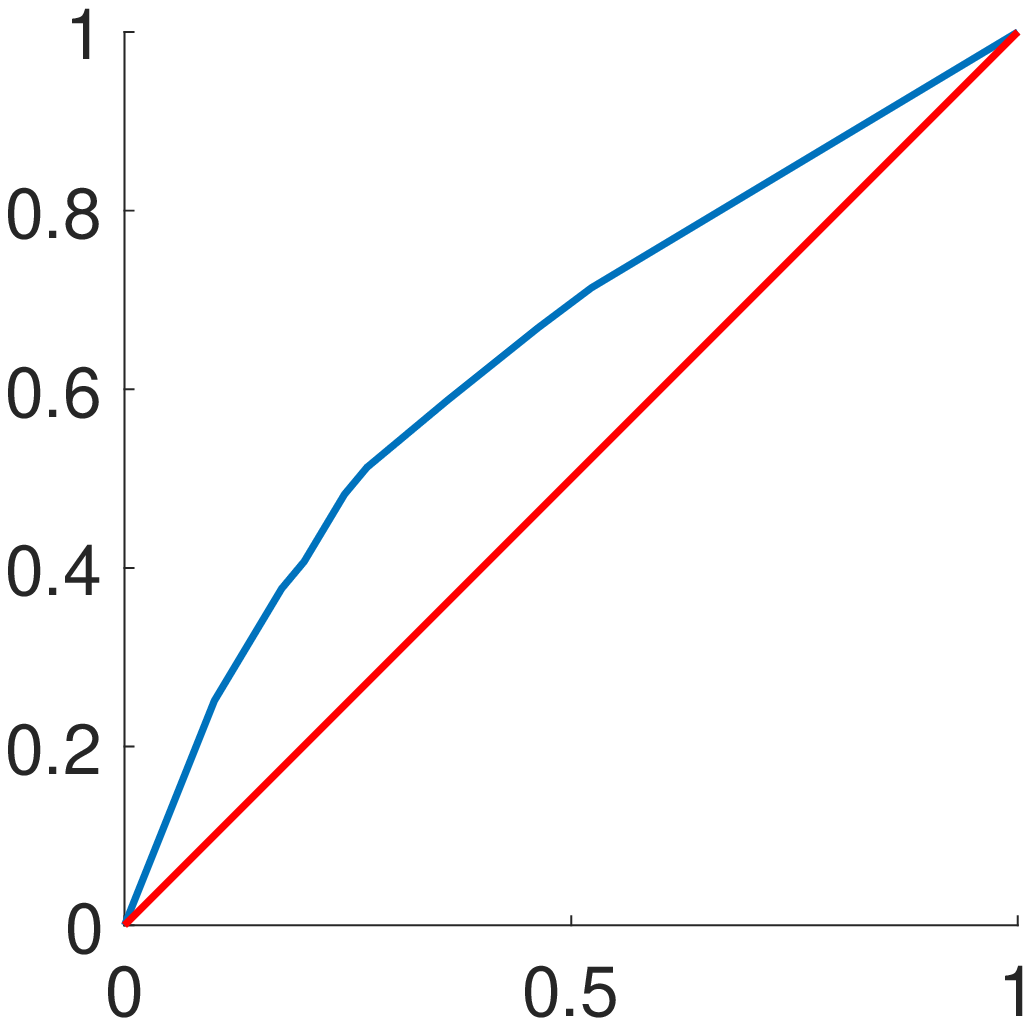}&\includegraphics[width=1in]{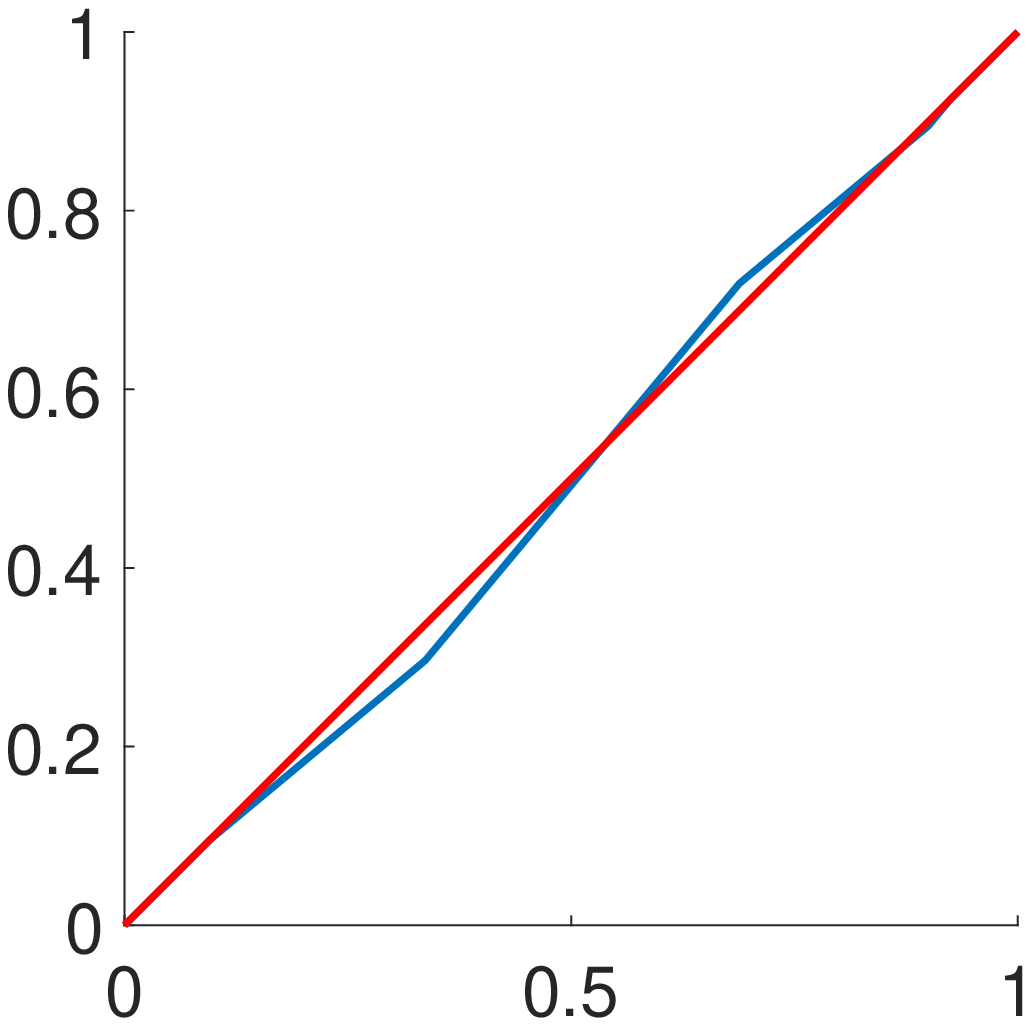}\\
\hline
dist=0.5277&dist=0.7103&dist=0.9060&dist=1.1954\\
\hline
\end{tabular}
\end{center}
\caption{Geodesic path and distance between shapes of two artificial open curves.}\label{fig:open_geodesic1}
\label{fig:openex1}
\end{figure}

\section{Numerical Experiments}\label{sec:applications}

\begin{figure}[!t]
\begin{center}
\begin{tabular}{|c|c|c|c|}
\hline
$\frac{a}{2b}$=2.00&$\frac{a}{2b}$=1.00&$\frac{a}{2b}$=0.50&$\frac{a}{2b}$=0.17\\
\hline
\includegraphics[width=1in]{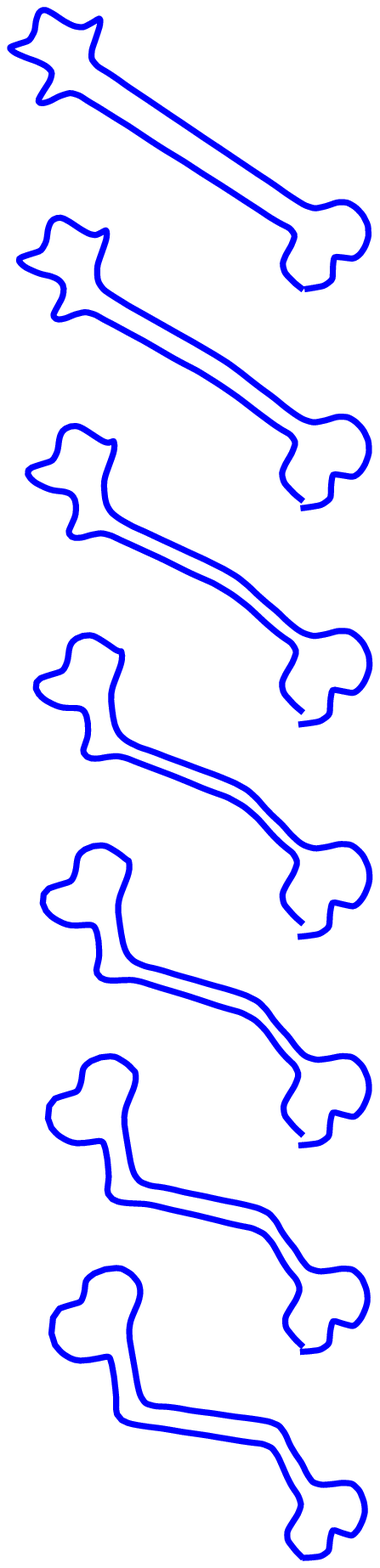}&\includegraphics[width=1in]{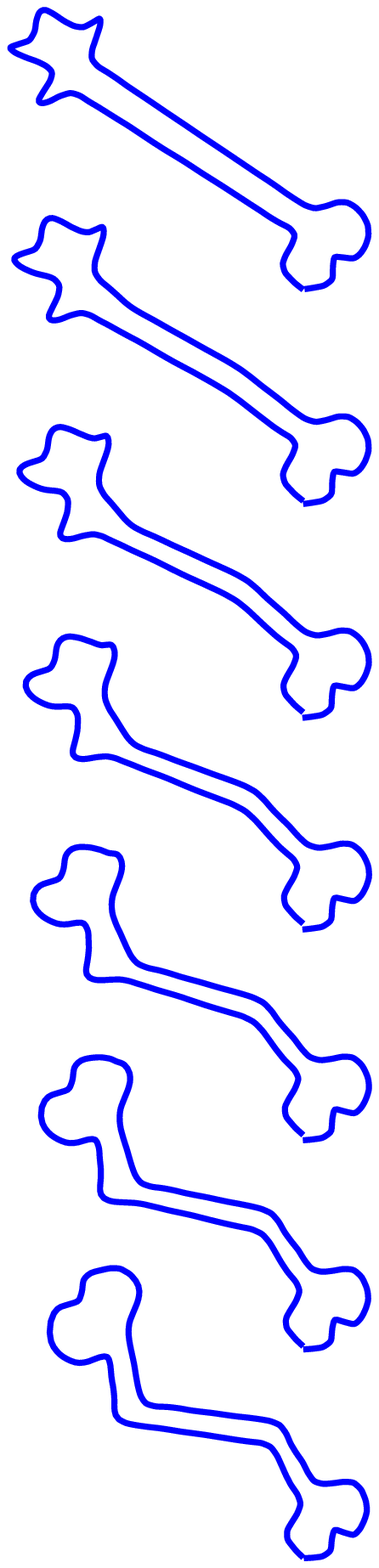}&\includegraphics[width=1in]{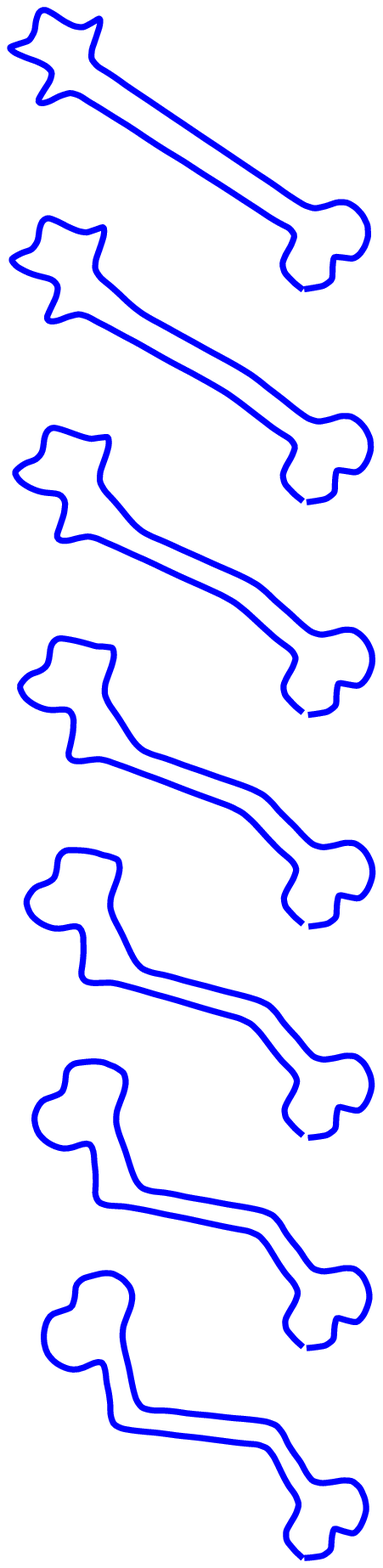}&\includegraphics[width=1in]{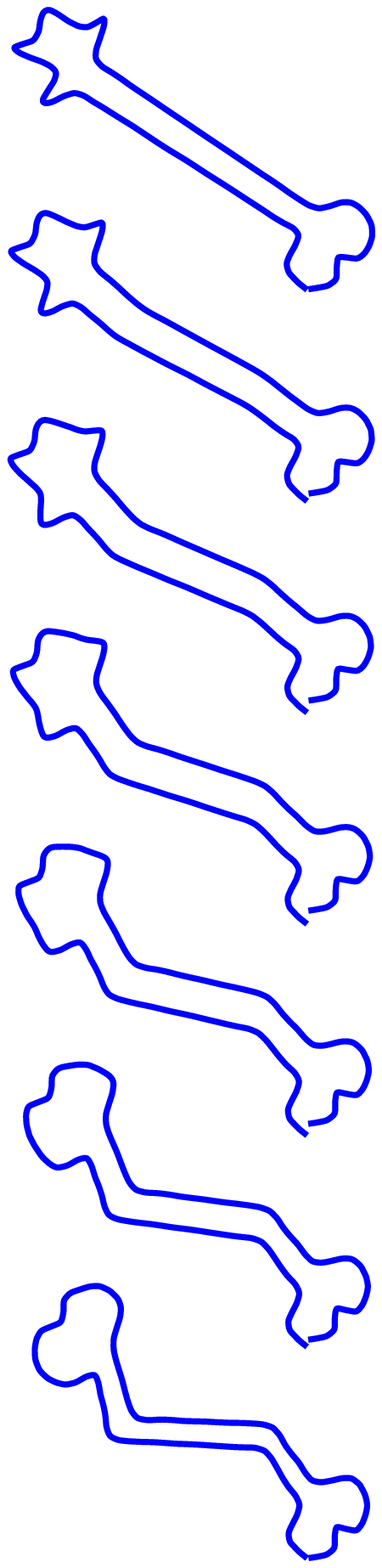}\\
\hline
\includegraphics[width=1in]{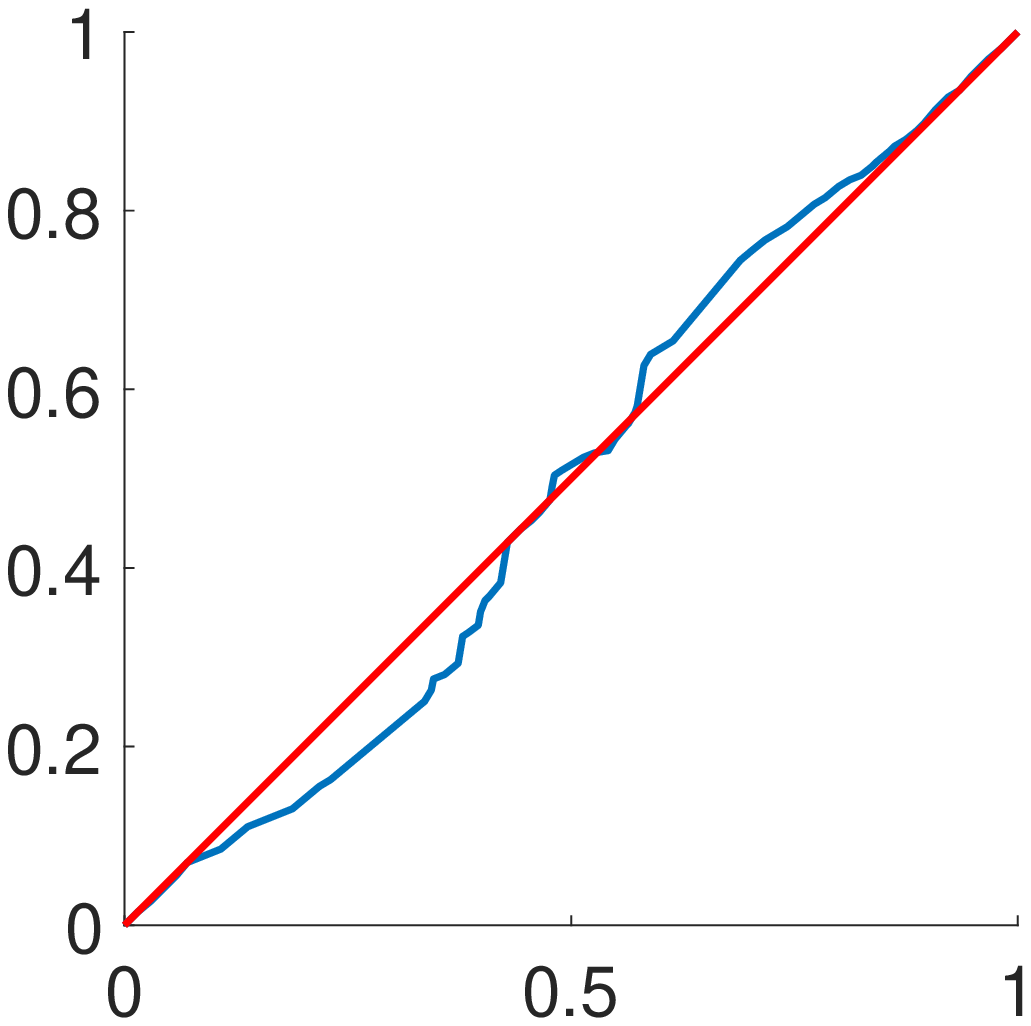}&\includegraphics[width=1in]{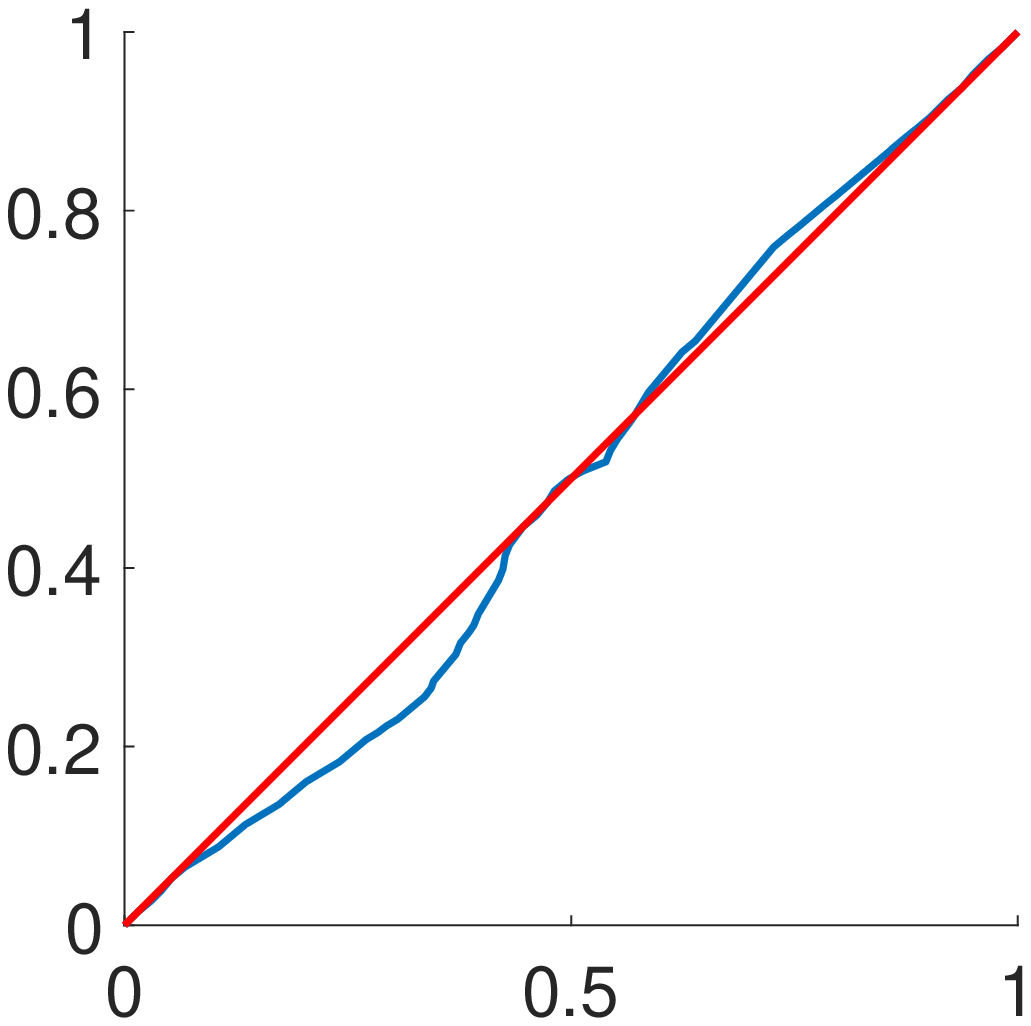}&\includegraphics[width=1in]{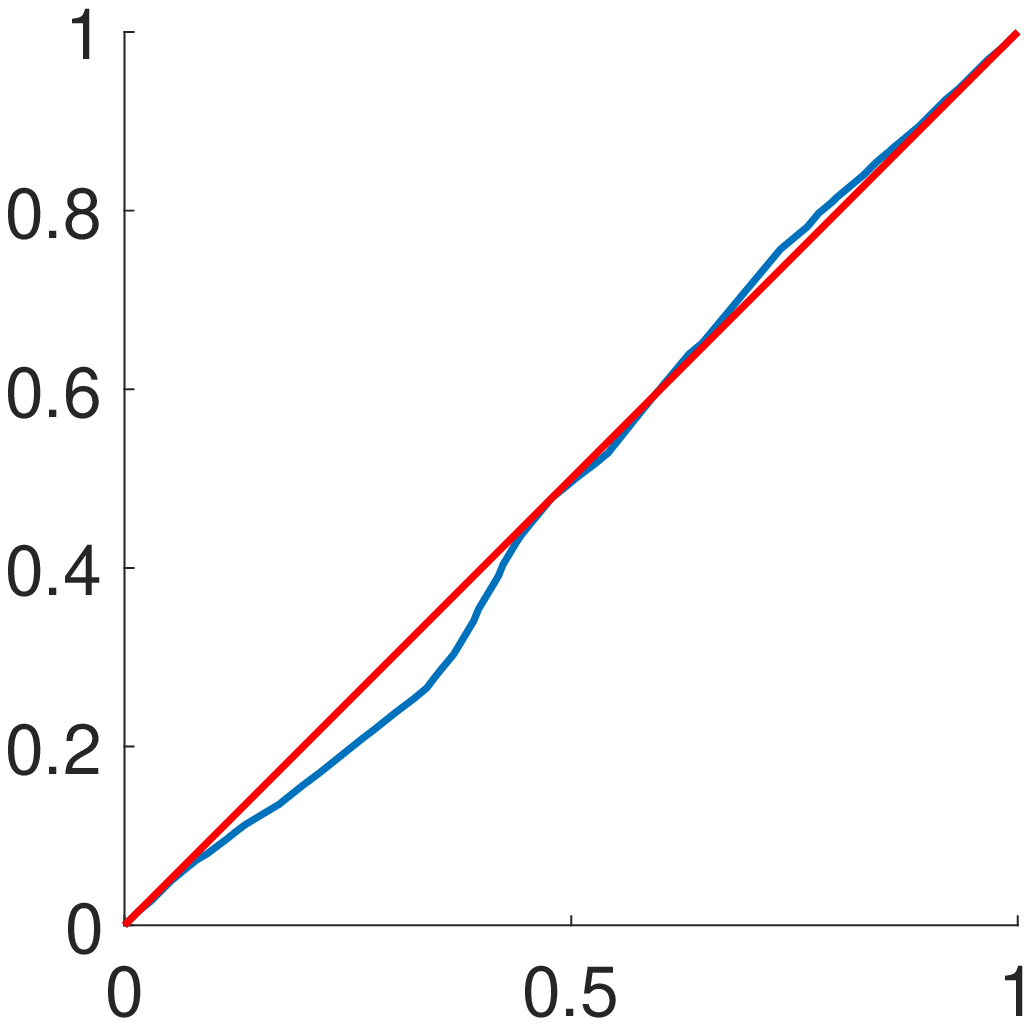}&\includegraphics[width=1in]{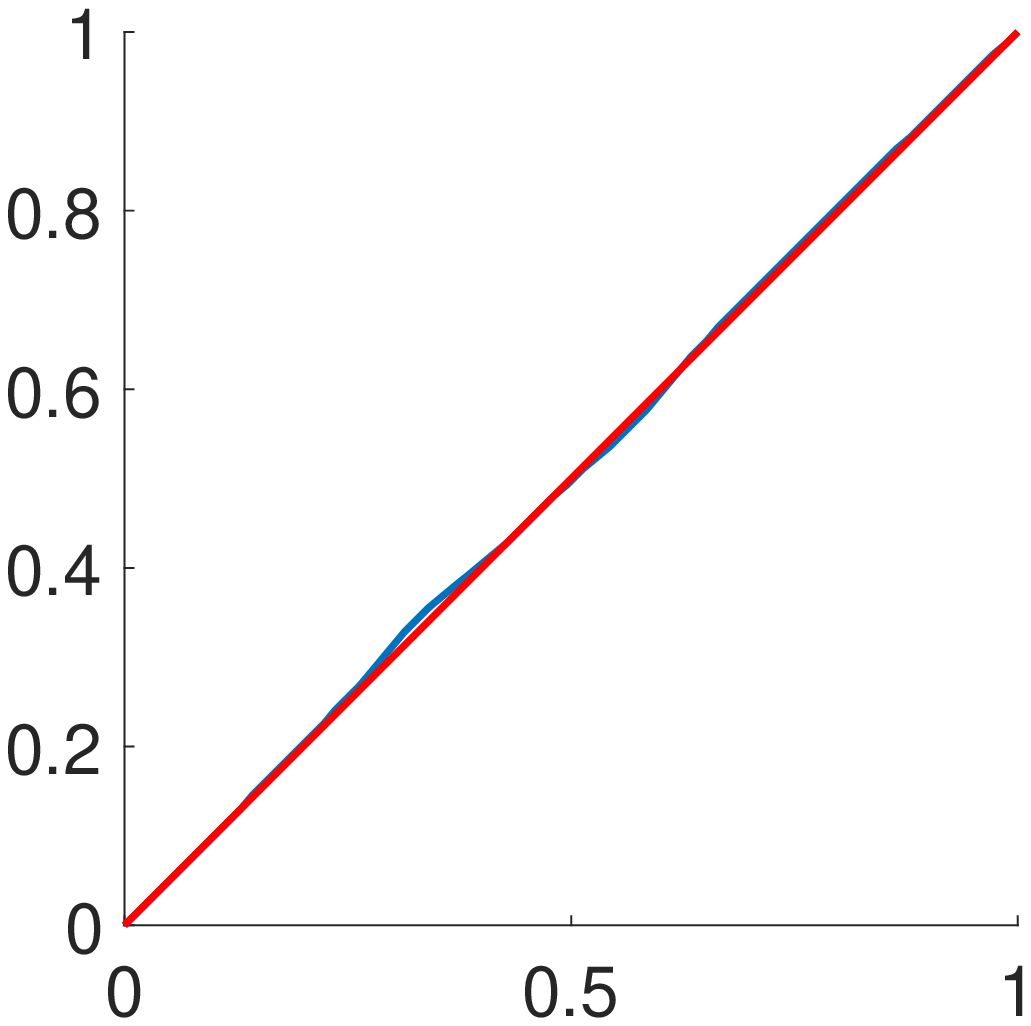}\\
\hline
dist=0.3783&dist=0.4534&dist=0.5342&dist=0.5773\\
\hline
\end{tabular}
\end{center}
\caption{Geodesic path and distance between two structurally different bone shapes.}\label{fig:open_geodesic2}
\label{fig:openex3}
\end{figure}

\subsection{Implementation Issue}\label{sec:geodesic_algorithm}

\begin{figure}[!t]
\begin{center}
\begin{tabular}{|c|c|c|c|}
\hline
$\frac{a}{2b}$=2.00&$\frac{a}{2b}$=1.00&$\frac{a}{2b}$=0.50&$\frac{a}{2b}$=0.17\\
\hline
\includegraphics[width=1in]{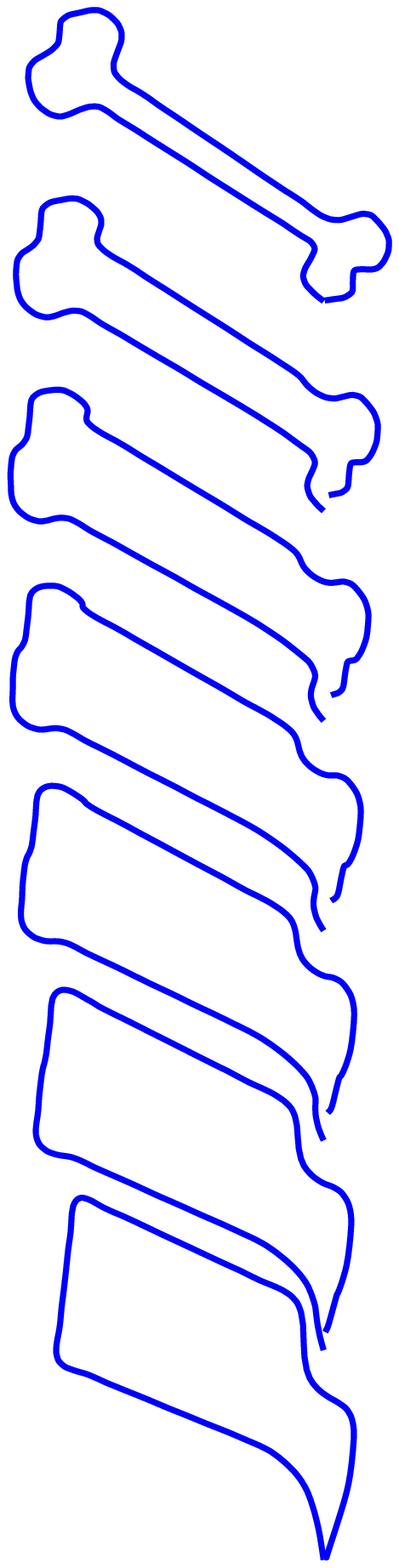}&\includegraphics[width=1in]{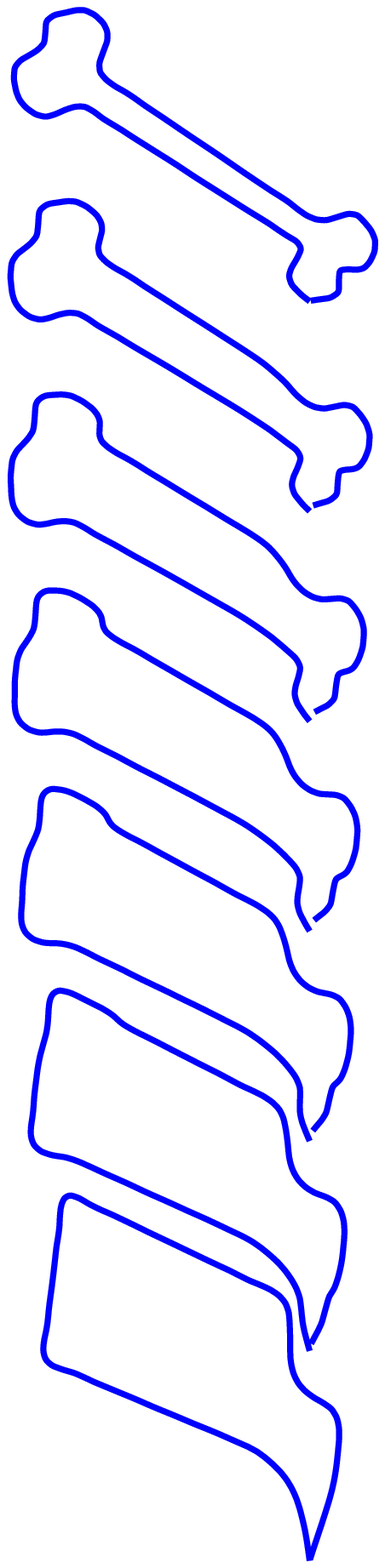}&\includegraphics[width=1in]{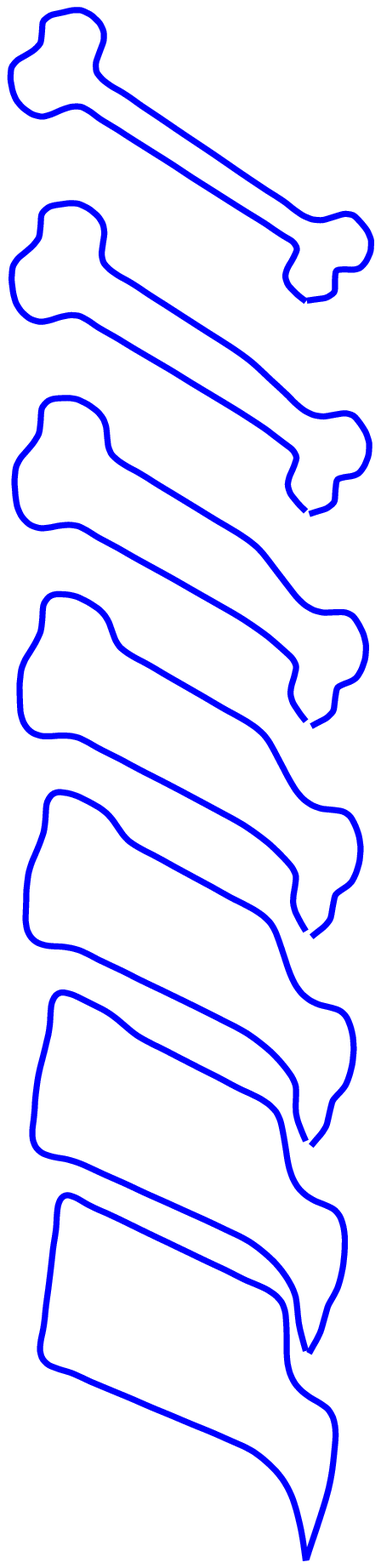}&\includegraphics[width=1in]{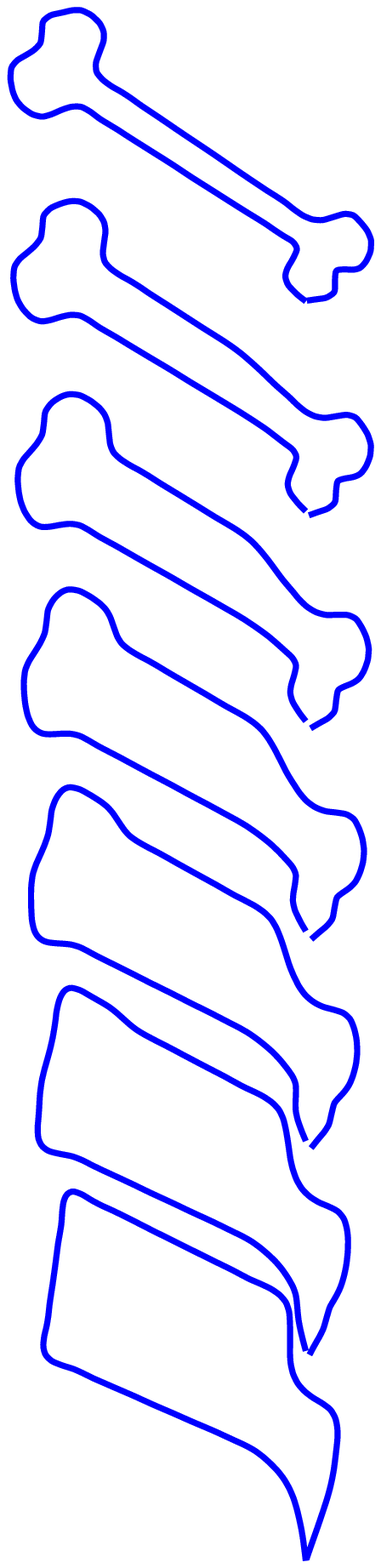}\\
\hline
\includegraphics[width=1in]{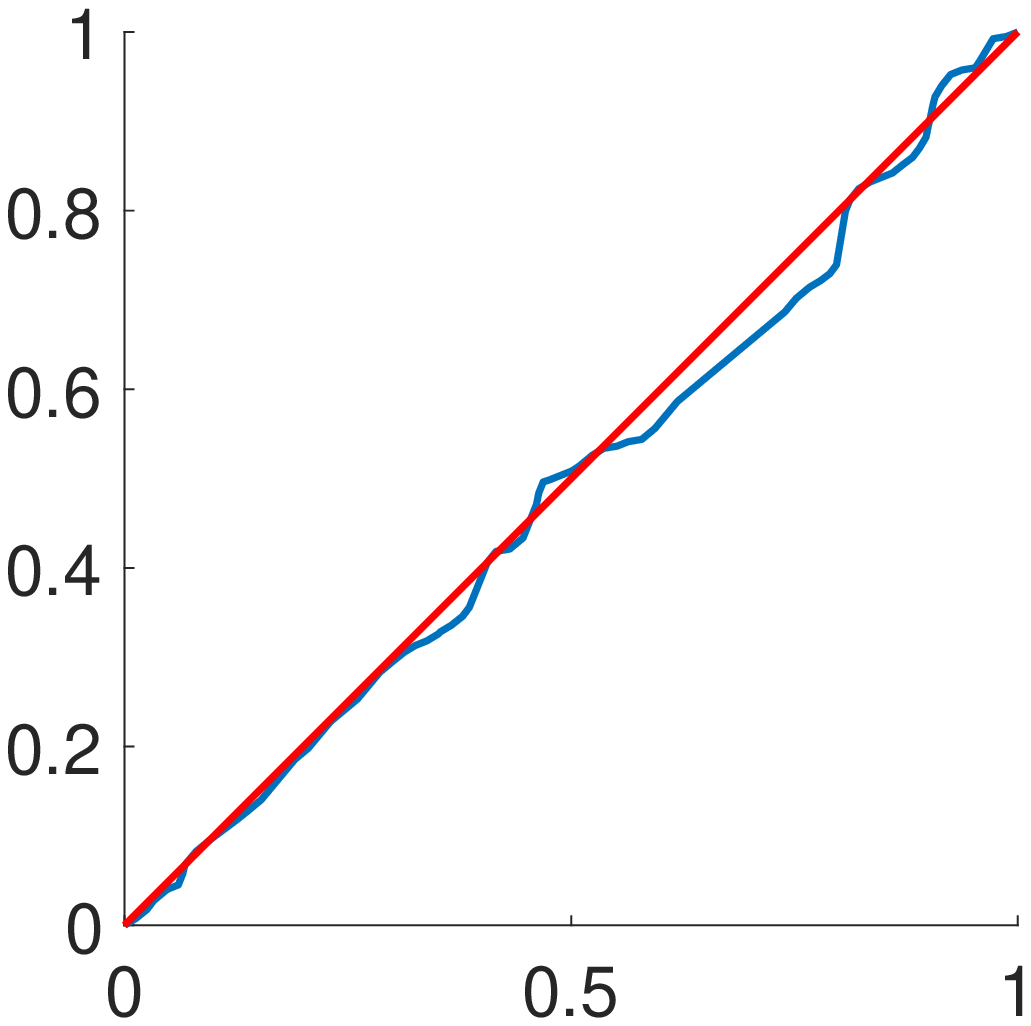}&\includegraphics[width=1in]{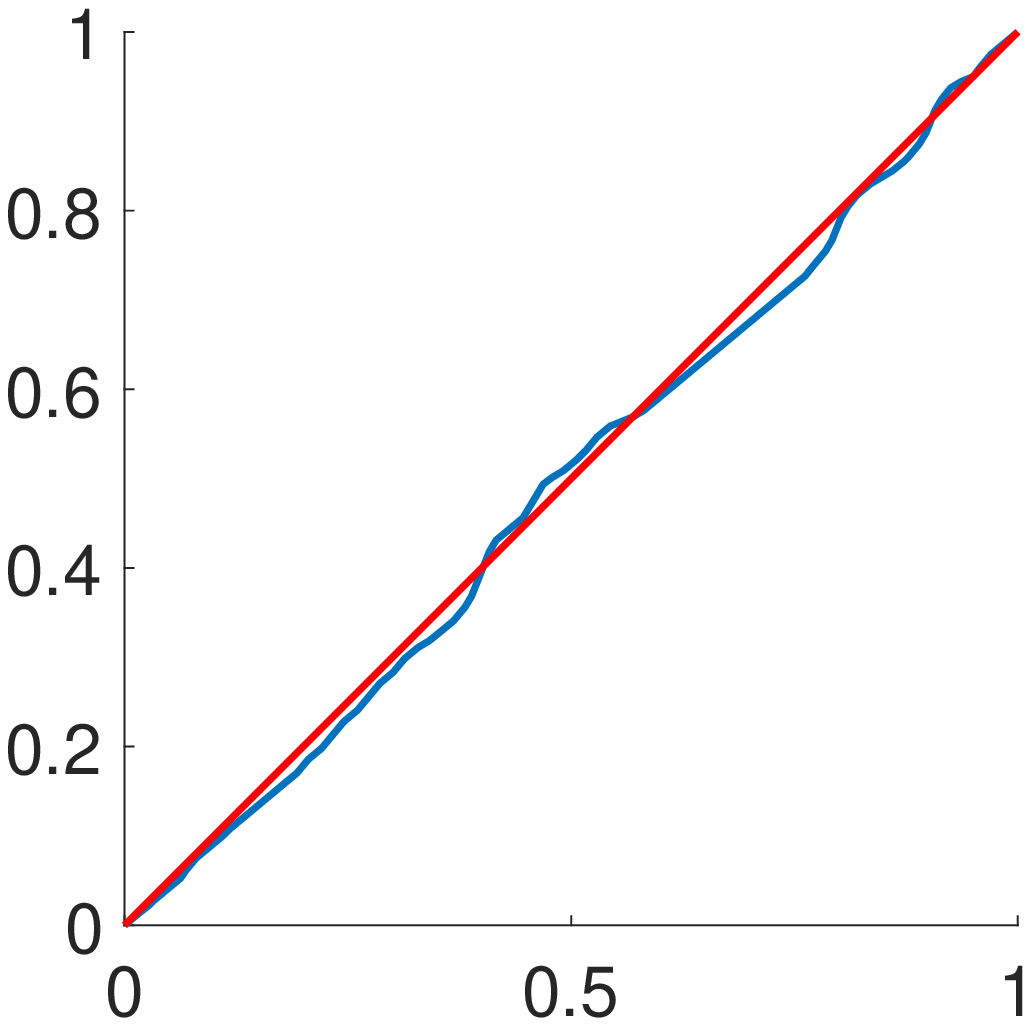}&\includegraphics[width=1in]{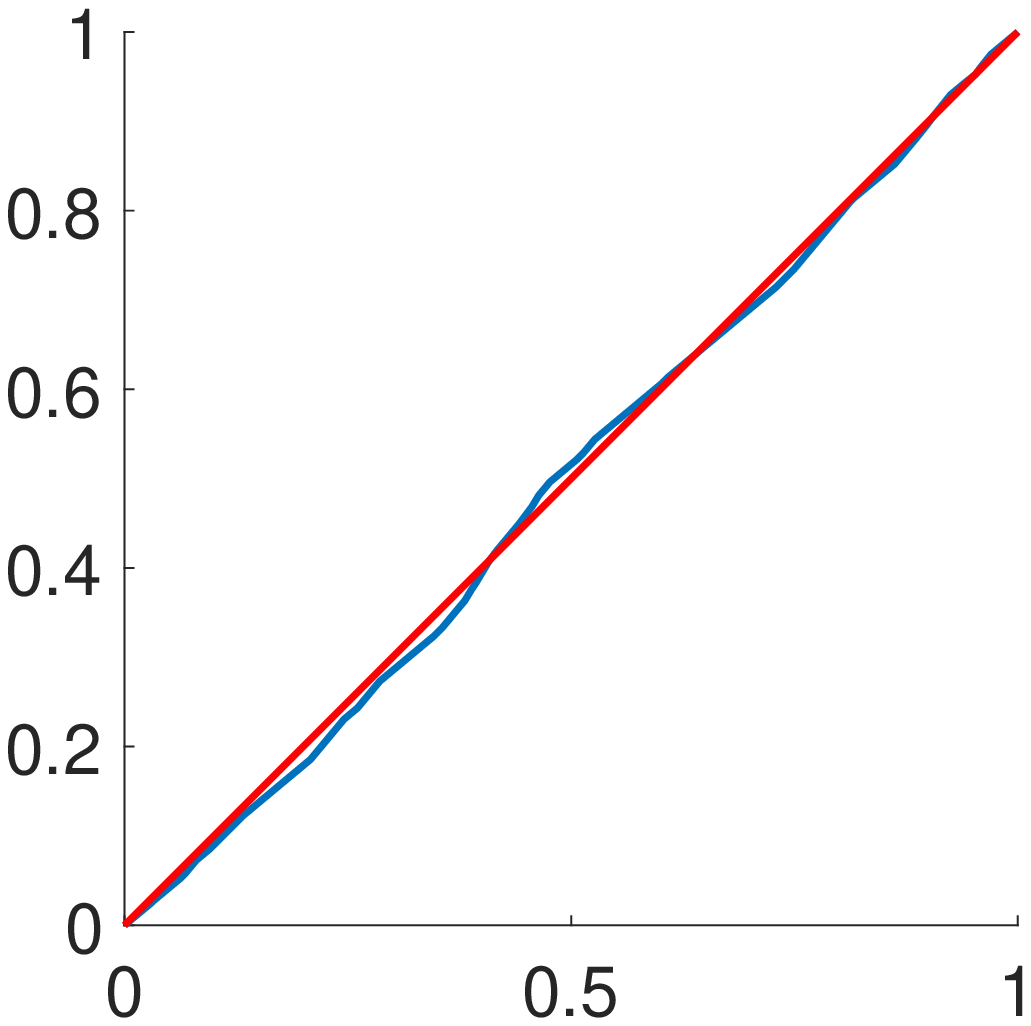}&\includegraphics[width=1in]{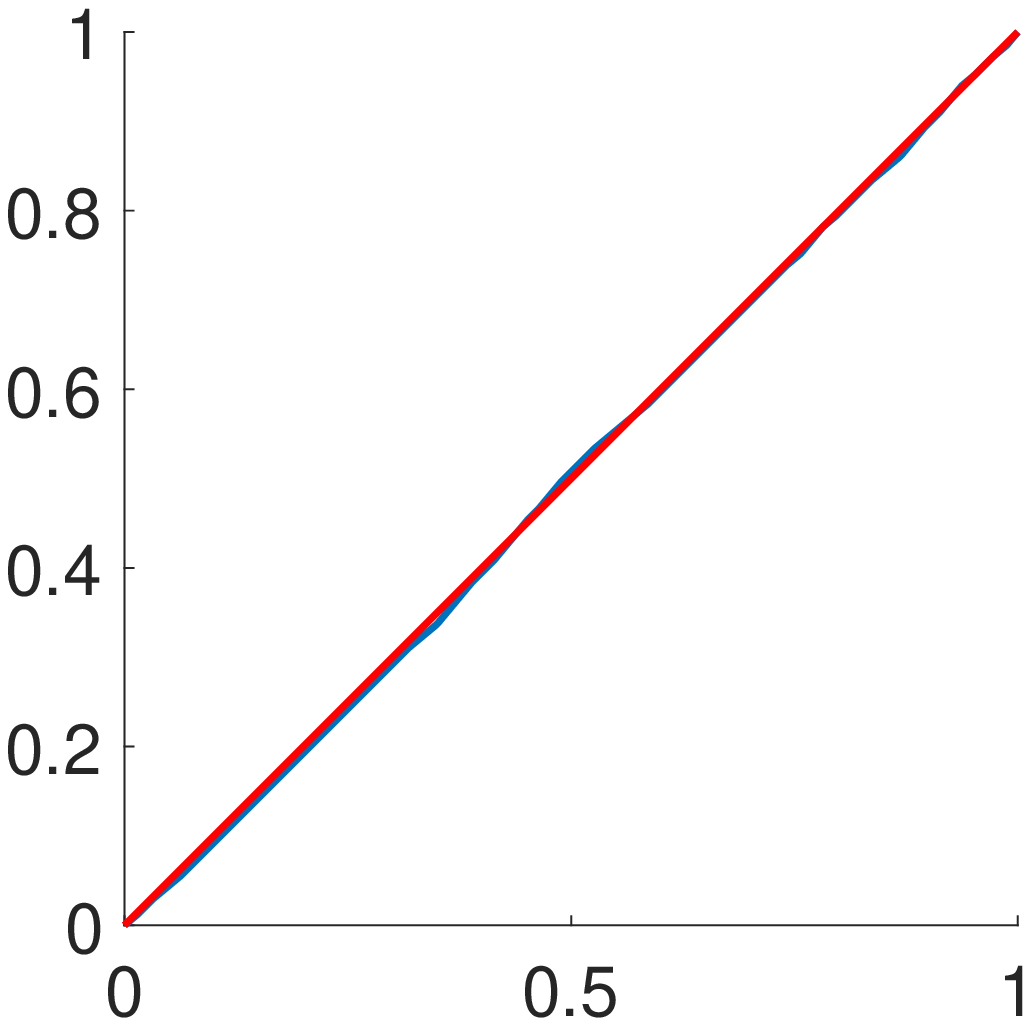}\\
\hline
dist=0.4129&dist=0.5249&dist=0.5855&dist=1.6206\\
\hline
\end{tabular}
\end{center}
\caption{Geodesic path and distance between two very different shapes.}\label{fig:open_geodesic3}
\label{fig:openex4}
\end{figure}

In the case when $\frac{a}{2b}>1$, the inverse mapping from the $F_{a,b}$-transform space to the space of curves is not guaranteed to produce valid angle functions when the curves are expressed in polar coordinates. This is due to large local differences between the angle functions for the shapes being compared. In these cases, we use a path-straightening algorithm to find the appropriate angle functions along the geodesic path. We omit the details of this algorithm for brevity, but note that the $F_{a,b}$ transform still provides a significant numerical simplification in this case as it allows us to search for optimal rotations and reparameterizations in the transform space under the $L^2$-metric. Furthermore, the resulting radius functions are valid, which results in a very simple and computationally efficient path-straightening algorithm.

\begin{figure}[!t]
\begin{center}
\begin{tabular}{|c|c|c|c|}
\hline
$\frac{a}{2b}$=1.25&$\frac{a}{2b}$=1.00&$\frac{a}{2b}$=0.50&$\frac{a}{2b}$=0.17\\
\hline
\includegraphics[width=.75in]{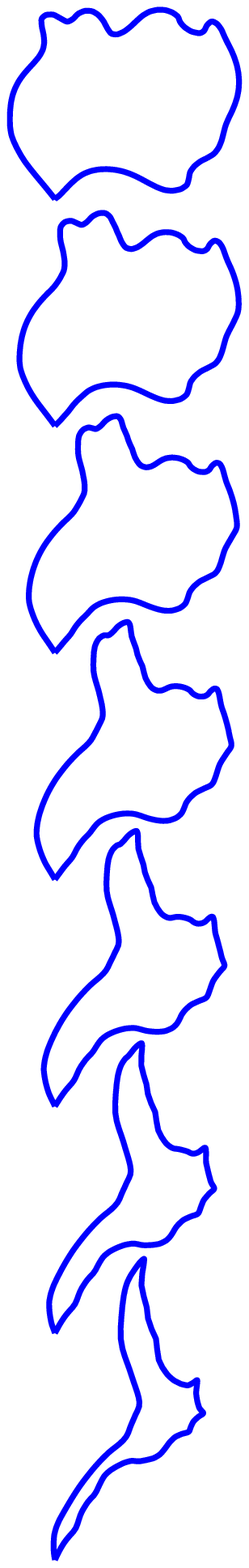}&\includegraphics[width=.75in]{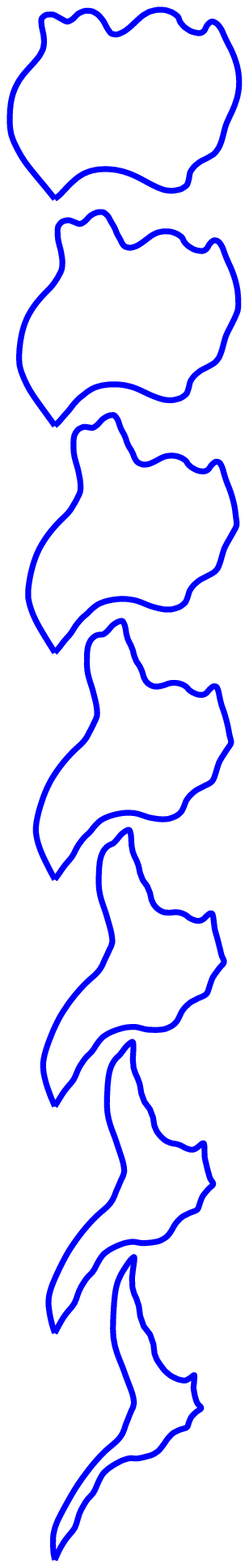}&\includegraphics[width=.75in]{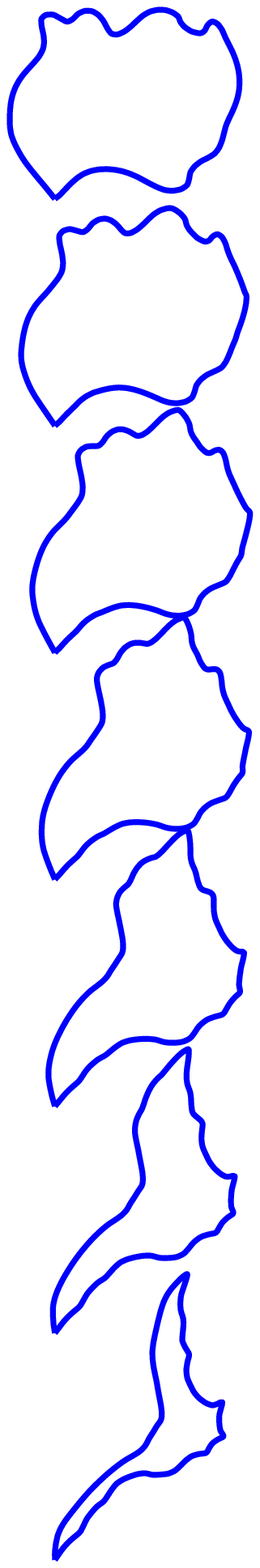}&\includegraphics[width=.75in]{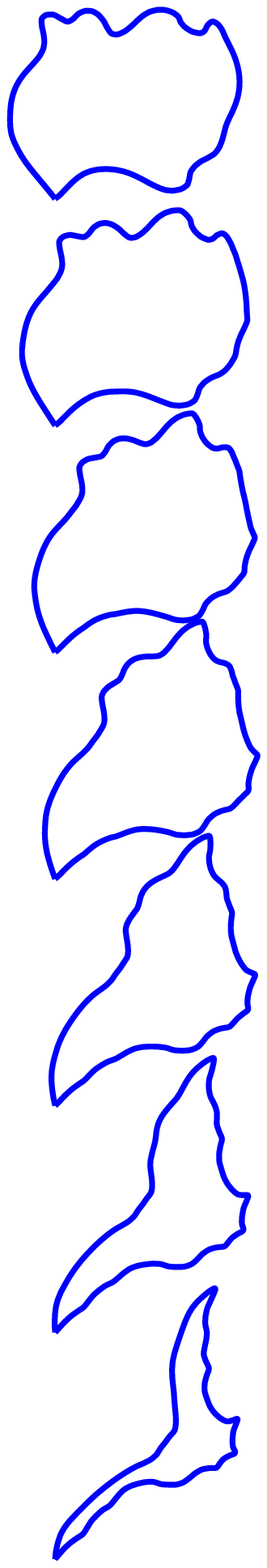}\\
\hline
\includegraphics[width=1in]{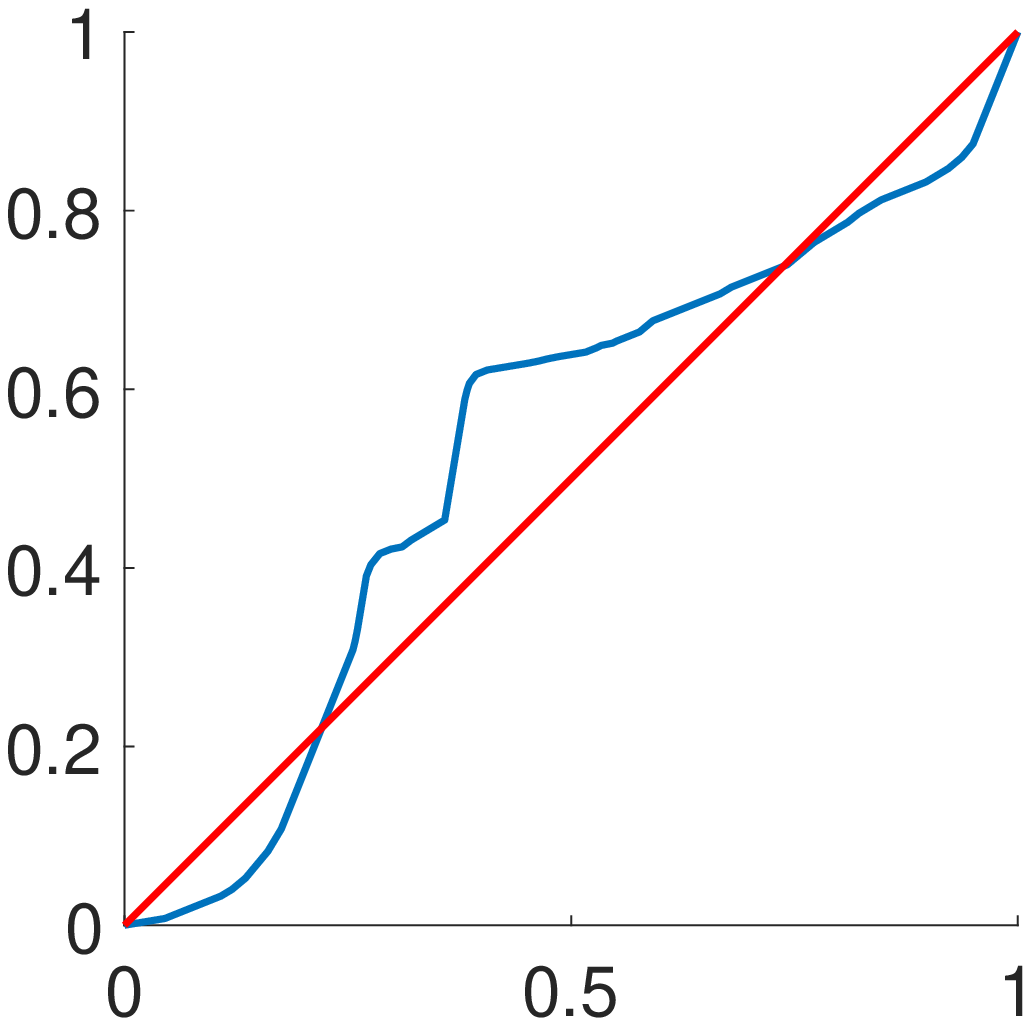}&\includegraphics[width=1in]{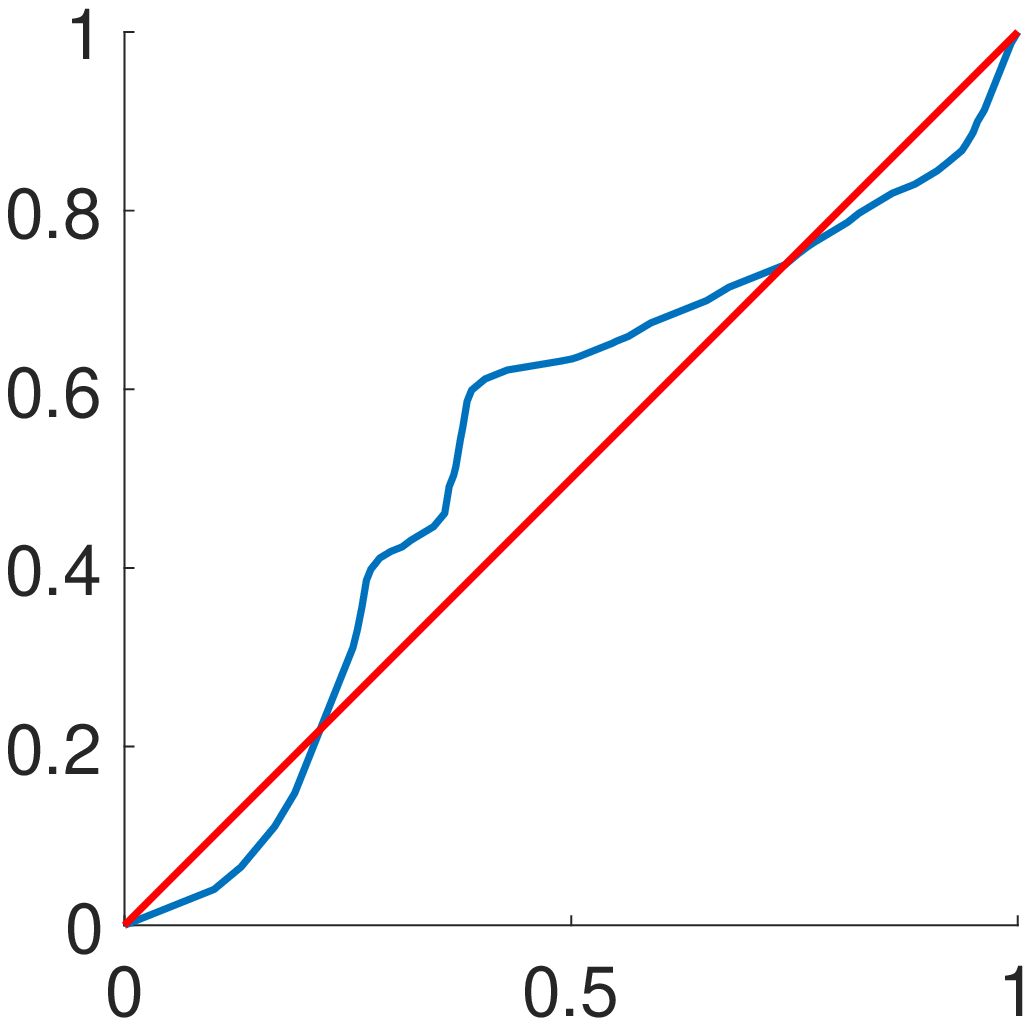}&\includegraphics[width=1in]{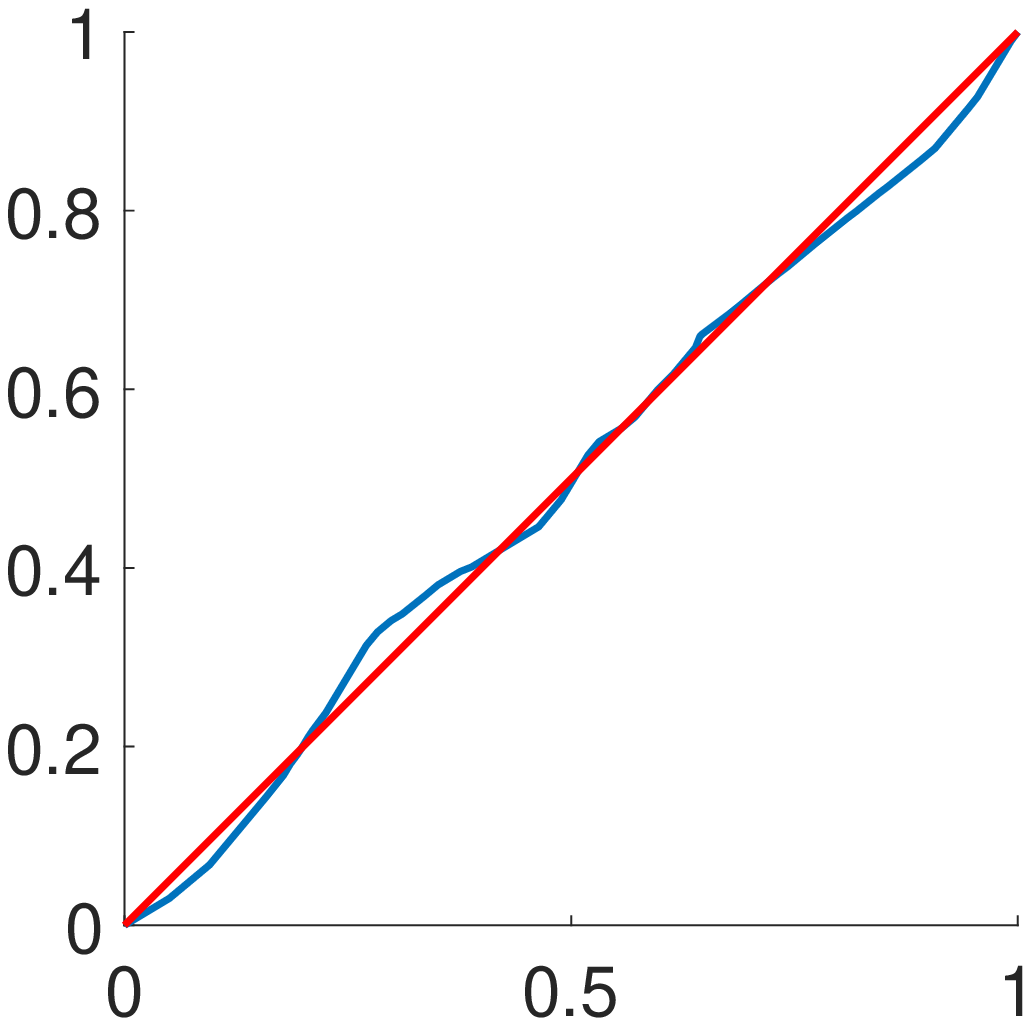}&\includegraphics[width=1in]{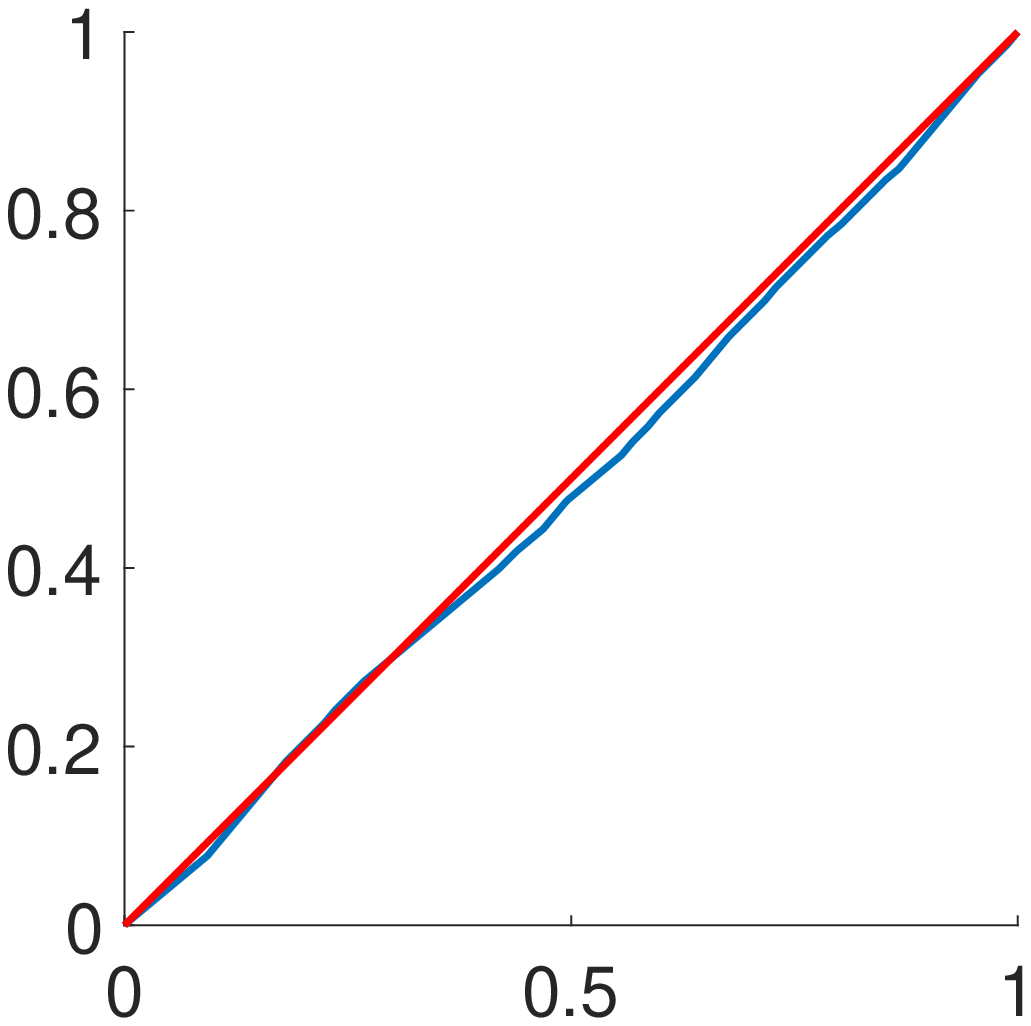}\\
\hline
dist=0.6016&dist=0.6792&dist=0.9109&dist=1.0344\\
\hline
\end{tabular}
\end{center}
\caption{Geodesic path and distance between two very different shapes.}\label{fig:closed_geodesic1}
\label{fig:closedex1}
\end{figure}

\subsection{Examples}\label{sec:examples}

We show several examples of geodesic paths and associated geodesic distances between shapes of open (Figures \ref{fig:open_geodesic1}---\ref{fig:open_geodesic3}) and closed (Figures \ref{fig:closed_geodesic1} and \ref{fig:closed_geodesic2}) curves for different parameter values in the elastic metric, computed via the methods described in Section \ref{sec:computing_geodesics} and \ref{sec:geodesic_algorithm}. Each figure shows the curve evolution and the optimal reparameterization in blue with the identity in red for comparison. While the example in Figure \ref{fig:open_geodesic1} considers two simulated open curves, the examples in Figures \ref{fig:open_geodesic2}-\ref{fig:closed_geodesic2} use curves from the well-known MPEG-7 computer vision database\footnote{http://www.dabi.temple.edu/~shape/MPEG7/dataset.html}.

We also provide a short classification experiment that shows the benefits of using general weights for the stretching and bending terms in the context of differentiating forgeries from genuine signatures. The data used here consists of 40 different signatures, which are a subset of the SVC 2004 dataset \cite{yeung2004}. Each signature class contains 20 genuine signatures and 20 skilled forgeries. We classified each signature as genuine or a forgery using leave-one-out nearest neighbor with respect to the geodesic distance under the $g^{a,b}$-metric for various parameter choices $(a,b)$. As a baseline, signatures were classified by the basic method of $L^2$ distance between arclength parameterized curves. Table \ref{fig:classfication1} reports the overall classification rate (across all 40 signature classes) for each metric, as well as the number of signature classes where classification was perfect. The first group of three results contain parameter values covered by the $R_{a,b}$-transform of \cite{bauer2014constructing}, with $\frac{a}{2b}=1$ corresponding to the SRVF and $\frac{a}{2b}=\frac{1}{2}$ corresponding to the complex square-root transform. The last three results are for parameter values obtained using our new transform. Classification is more successful for the parameter values given by our new results, with $\frac{a}{2b}=2$ matching the performance of the SRVF. Figure \ref{fig:classification2} shows the classification rates for $\frac{a}{2b}=1$ and $\frac{a}{2b}=2$ across the 40 individual signature classes. Although these parameter values have the same overall performance, we see that their performances differ drastically by signature class. This suggests that certain parameter values may be more sensitive to features appearing in particular signature classes, and that it is in general be beneficial to vary the choice of parameters to suit a given task.

\begin{figure}[!t]
\begin{center}
\begin{tabular}{|c|c|c|c|}
\hline
$\frac{a}{2b}$=1.25&$\frac{a}{2b}$=1.00&$\frac{a}{2b}$=0.50&$\frac{a}{2b}$=0.17\\
\hline
\includegraphics[width=.75in]{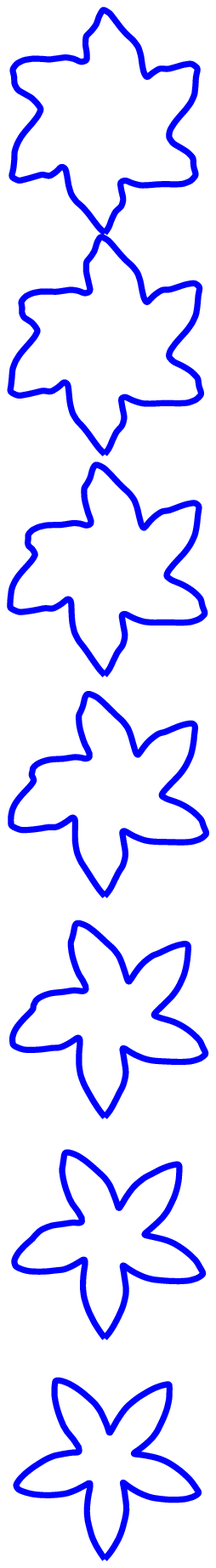}&\includegraphics[width=.75in]{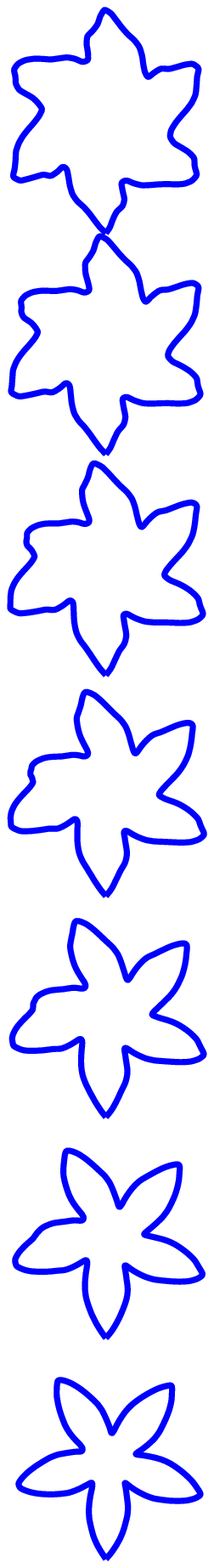}&\includegraphics[width=.75in]{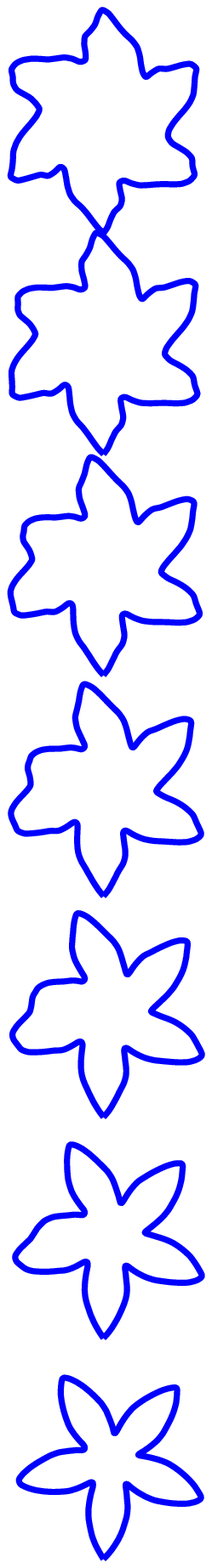}&\includegraphics[width=.75in]{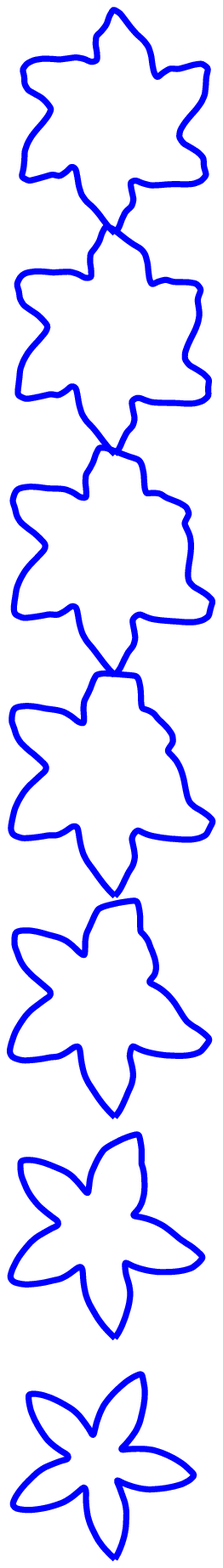}\\
\hline
\includegraphics[width=1in]{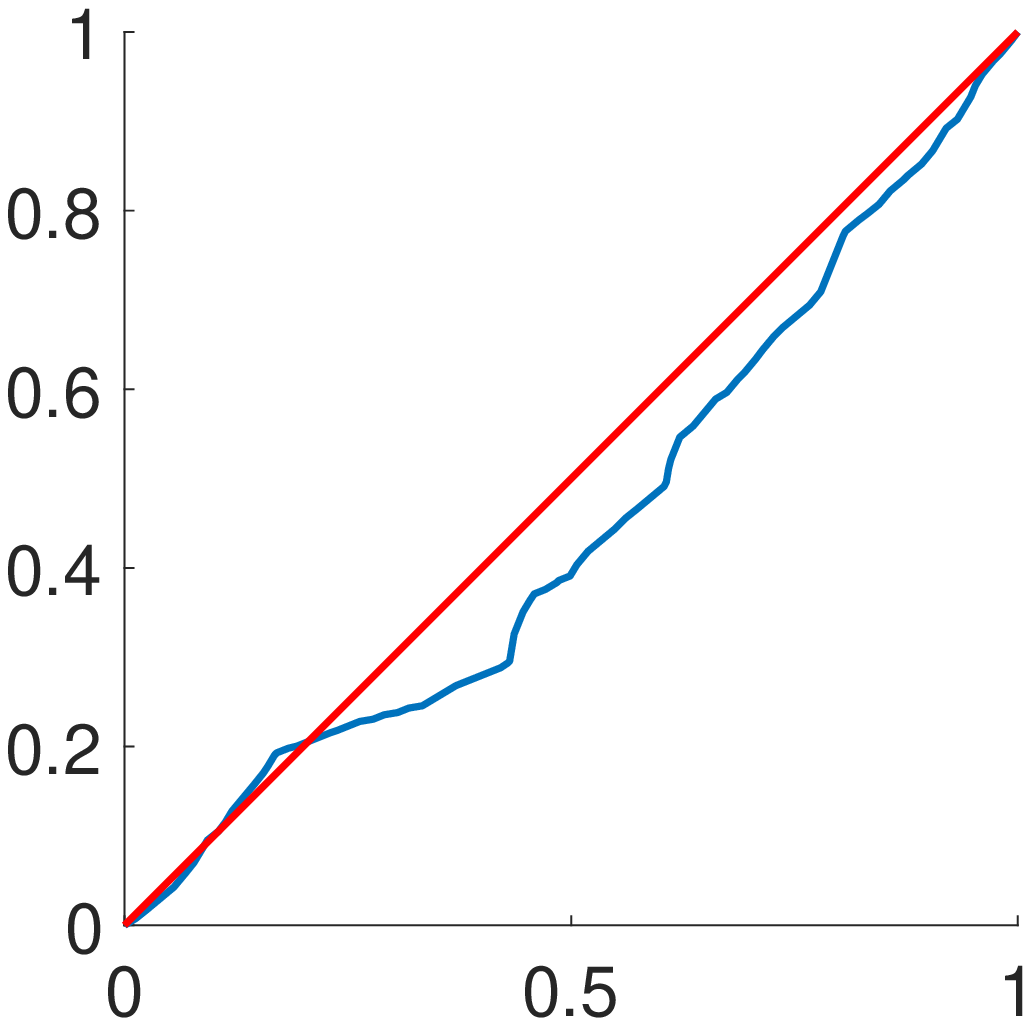}&\includegraphics[width=1in]{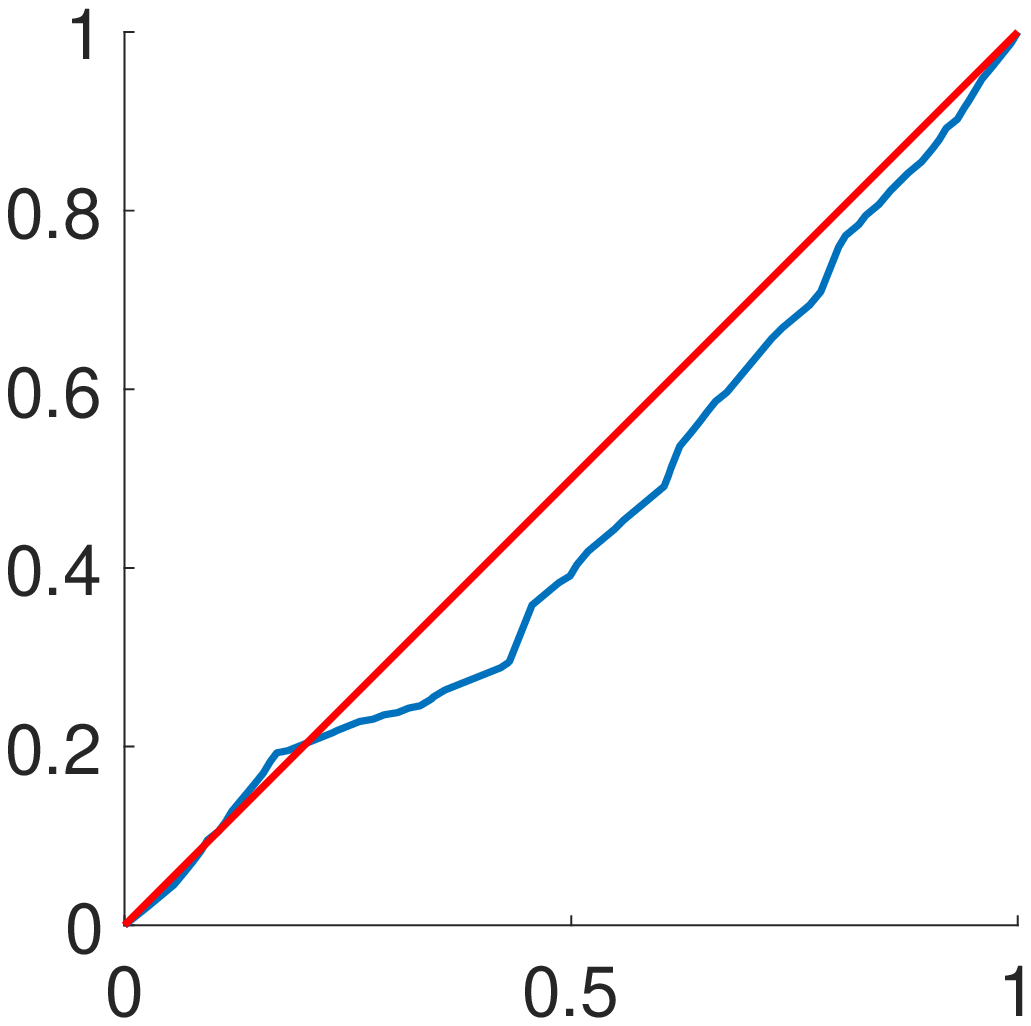}&\includegraphics[width=1in]{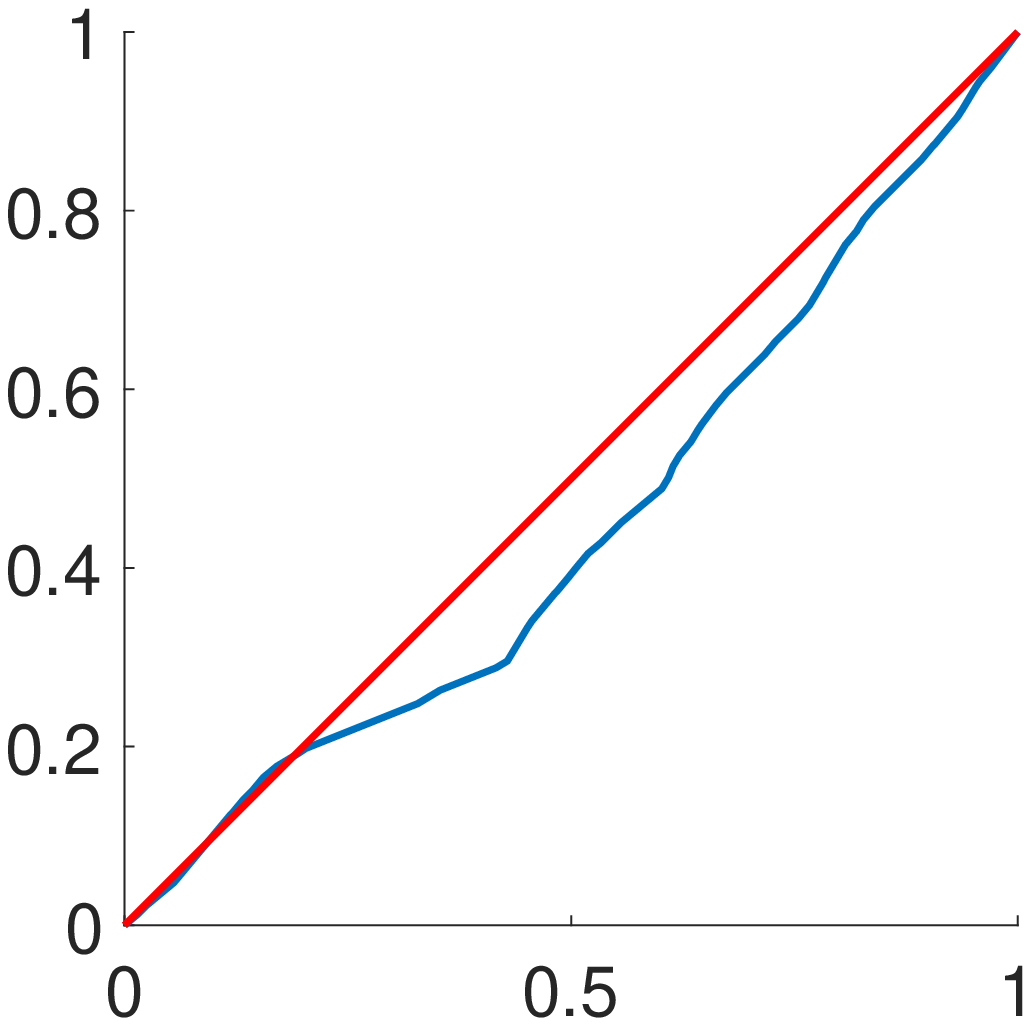}&\includegraphics[width=1in]{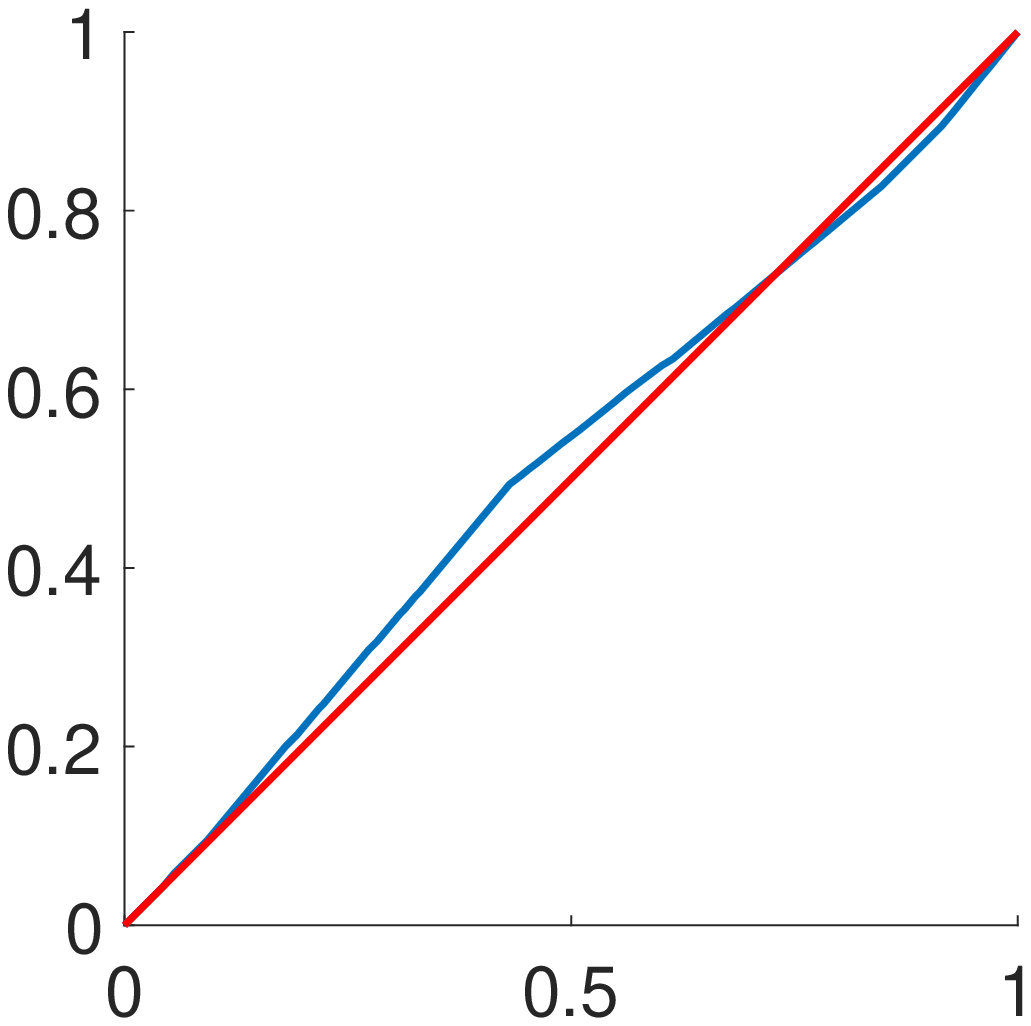}\\
\hline
dist=0.4580&dist=0.5028&dist=0.6732&dist=1.0576\\
\hline
\end{tabular}
\end{center}
\caption{Geodesic path and distance between two shapes of flowers with a different number of petals.}\label{fig:closed_geodesic2}
\label{fig:closedex2}
\end{figure}


\begin{table}[!t]
 \begin{tabular}{|c c c|}
 \hline
Method & Classif. Rate & Perfect Matches  \\ [0.5ex]
 \hline\hline
 Arclength & $86.44$ & 1 \\
 \hline \hline
  $\frac{a}{2b}=\frac{1}{4}$ & $86.50 $ & 0 \\
 \hline
 $\frac{a}{2b}=\frac{1}{2}$ & $93.81 $ & 6 \\
 \hline
   $\frac{a}{2b}=1$ & $97.50 $ & 19  \\
   \hline \hline
     $\frac{a}{2b}=2$ & $97.50 $ & 19  \\
      \hline
   $\frac{a}{2b}=3$ & $96.69 $ & 16  \\
 \hline
   $\frac{a}{2b}=4$ & $97.00 $ & 16  \\
 \hline
\end{tabular}
\caption{Overall classification rates and number of perfect classifications for the signature experiment.}\label{fig:classfication1}
\end{table}

\begin{figure}[!t]
\includegraphics[scale=0.2]{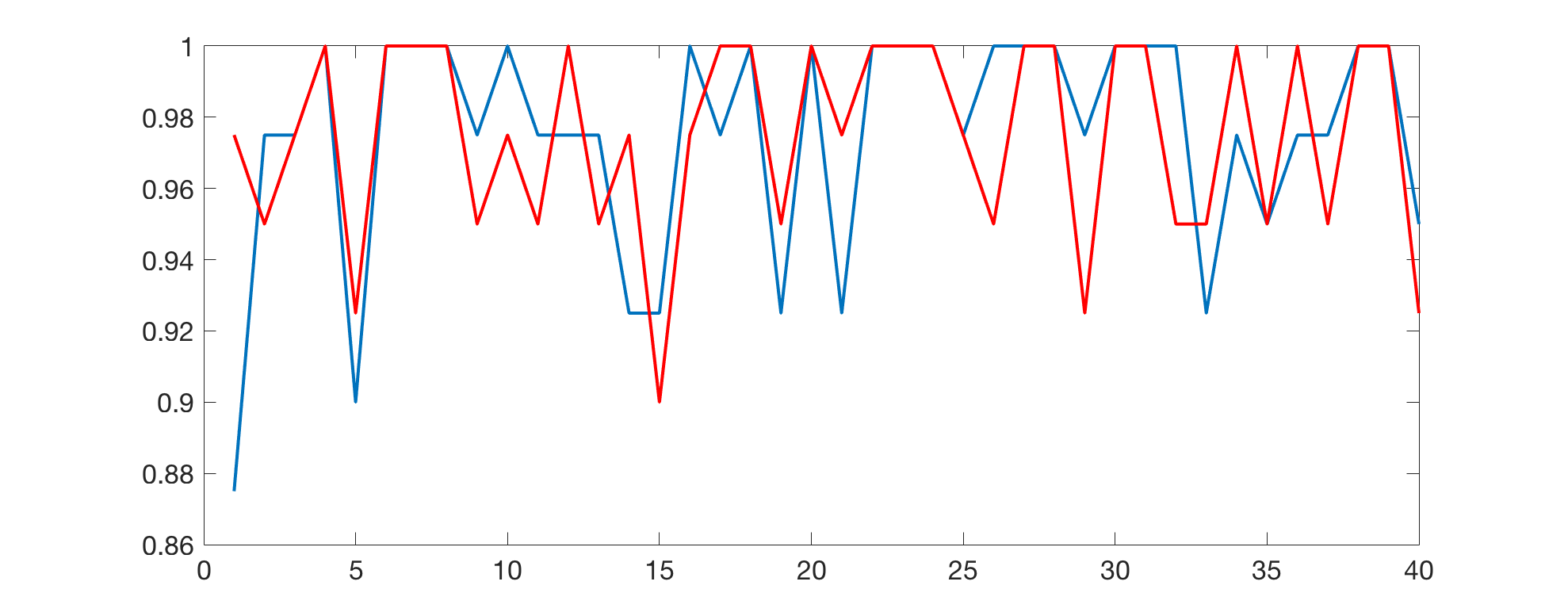}
\caption{Classification rates across 40 individual signature classes using the geodesic distance for parameters $\frac{a}{2b} =1$ (in blue) and $\frac{a}{2b} = 2$ (in red).}\label{fig:classification2}
\end{figure}

\section{Future Directions}\label{sec:conclusion}

Our first direction for future work is to develop various statistical tools under this framework. These will include computation of summary statistics such as the average and covariance of a set of shapes, exploration of variability in various shape classes through PCA, building generative shape models, inference via hypothesis testing and confidence intervals, and finally regression analysis. Given the simplification of the metric under the proposed $F_{a,b}$ transform, and the simple geometry of the preshape space, we will be able to build on existing work in this area based on the SRVF transform (i.e., $\frac{a}{2b}=1$). We have seen in the presented examples that different choices of $a$ and $b$ produce different geodesic paths, and thus, will result in different statistical analyses.

Second, we will build statistical models that allow the data to choose appropriate values of $a$ and $b$ for the given application and task. For example, in the context of classification, one can learn optimal weights on training data and then apply the proposed framework on a held-out set. Furthermore, one can build hierarchical Bayesian models for registration, comparison and averaging of shapes of planar curves that include priors on the values of $a$ and $b$. Such models can be developed in a similar manner to the functional data approaches in \cite{SK, YL}. In those works, the authors simply work with fixed values of $a$ and $b$. We propose to extend those methods by additionally including the weights for stretching and bending in the posterior ditribution.

In previous work, one of the authors extended the results of Younes et al. \cite{younes2008metric} to give a metric with explicit geodesics on the space of closed loops in $\R^3$ \cite{needham2017kahler}. This is accomplished by replacing the complex squaring map with the Hopf map $S^3 \rightarrow S^2$. Using quaternionic arithmetic, we expect that the results of this paper can be extended to provide transforms which simplify metrics on space curves as well.

\bibliographystyle{plain}

\bibliography{needham_bibliography}

\end{document}